\documentclass[11pt]{article}

\input{head/package}

\declaretheoremstyle[
  shaded={bgcolor=gray!15},
]{thmsty}

\declaretheorem[
  name=Theorem,
  refname={Theorem,Theorems},
  style=thmsty,
]{theorem}

\declaretheorem[
  name=Lemma,
  refname={Lemma,Lemmas},
  style=thmsty,
  numberlike=theorem,
]{lemma}

\declaretheorem[
  name=Assumption,
  refname={Assumption,Assumptions},
  style=thmsty,
]{assumption}
\declaretheorem[
  name=Remark,
  refname={Remark,Remarks},
]{remark}

\declaretheorem[
  name=Lemma,
  refname={Lemma,Lemmas},
  style=thmsty,
]{techlemma}
\usepackage{chngcntr}
\counterwithin{techlemma}{section}

\crefname{algorithm}{Algorithm}{Algorithms}
\crefname{line}{Line}{Lines}
\crefname{section}{Section}{Sections}
\crefname{appendix}{Appendix}{Appendices}
\crefname{table}{Table}{Tables}
\crefname{figure}{Figure}{Figures}
\crefname{equation}{}{}
\Crefname{equation}{Eq.}{Eqs.}

\captionsetup[subfigure]{subrefformat=simple,labelformat=simple}

\setlist[itemize]{
  topsep=0.4\baselineskip,
  itemsep=0\baselineskip,
  leftmargin=1.5em,
}

\setlist[enumerate]{
  font=\upshape,
  label=(\alph*),
  ref=(\alph*),
  topsep=0.4\baselineskip,
  itemsep=0\baselineskip,
  leftmargin=2em,
}

\newlist{enuminasm}{enumerate}{1} 
\setlist[enuminasm]{
  font=\upshape,
  label=(\alph*),
  ref=\theassumption(\alph*),
  topsep=0.4\baselineskip,
  itemsep=0\baselineskip,
  leftmargin=2em,
}
\crefalias{enuminasmi}{assumption}

\newlist{enuminthm}{enumerate}{1}
\setlist[enuminthm]{
  font=\upshape,
  label=(\alph*),
  ref=\thetheorem(\alph*),
  topsep=0.4\baselineskip,
  itemsep=0\baselineskip,
  leftmargin=2em,
}
\crefalias{enuminthmi}{theorem}

\newlist{enuminlem}{enumerate}{1}
\setlist[enuminlem]{
  font=\upshape,
  label=(\alph*),
  ref=\thelemma(\alph*),
  topsep=0.4\baselineskip,
  itemsep=0\baselineskip,
  leftmargin=2em,
}
\crefalias{enuminlemi}{lemma}

\newlist{enuminprop}{enumerate}{1}
\setlist[enuminprop]{
  font=\upshape,
  label=(\alph*),
  ref=\theproposition(\alph*),
  topsep=0.4\baselineskip,
  itemsep=0\baselineskip,
  leftmargin=2em,
}
\crefalias{enuminpropi}{proposition}

\newlist{enumincond}{enumerate}{1}
\setlist[enumincond]{
  font=\upshape,
  label=(\alph*),
  ref=\thecondition(\alph*),
  topsep=0.4\baselineskip,
  itemsep=0\baselineskip,
  leftmargin=2em,
}
\crefalias{enumincondi}{condition}



\DeclareMathOperator{\EE}{\mathbb{E}}



\DeclarePairedDelimiterX\inner[2]{\langle}{\rangle}{{#1},{#2}}

\DeclarePairedDelimiter\bra{[}{]}
\DeclarePairedDelimiterX\Set[2]{\{}{\}}{\mspace{2mu}{#1}\;\delimsize|\;{#2}\mspace{2mu}}
\DeclarePairedDelimiterX\Prn[2]{(}{)}{\mspace{2mu}{#1}\;\delimsize|\;{#2}\mspace{2mu}}
\DeclarePairedDelimiterX\Bra[2]{[}{]}{\mspace{2mu}{#1}\;\delimsize|\;{#2}\mspace{2mu}}


\newcommand{\N}{\mathbb N}

\newcommand{\R}{\mathbb R}

\newcommand{\bs}{\bm{s}}

\newcommand{\bx}{\bm{x}}

\newcommand{\ba}{\bm{a}}

\newcommand{\bv}{\bm{v}}

\newcommand{\bu}{\bm{u}}

\newcommand{\bA}{\bm{A}}
\newcommand{\bme}{\bm{e}}
\newcommand{\by}{\bm{y}}
\newcommand{\bmxi}{\bm{\xi}}
\newcommand{\bw}{\bm{w}}
\newcommand{\bg}{\bm{g}}

\newcommand{\bnu}{\bm{\nu}}

\newcommand{\bI}{\bm{I}}
\newcommand{\mR}{\mathbb{R}}

\newcommand{\bone}{\bm{1}}

\newcommand{\E}{\mathbb{E}}
\newcommand{\mpr}{\mathrm{Pr}}

\newcommand{\maN}{\mathcal{N}}
\newcommand{\maS}{\mathcal{S}}
\newcommand{\maB}{\mathcal{B}}


\renewcommand{\epsilon}{\varepsilon}


\NewDocumentCommand{\exsub}{s m O{} m}{%
  \IfBooleanT{#1}{\EE_{#2}\nolimits\bra*{#4}}%
  \IfBooleanF{#1}{\EE_{#2}\nolimits\bra[#3]{#4}}%
}
\NewDocumentCommand{\ex}{s O{} m}{%
  \IfBooleanT{#1}{\EE\nolimits\bra*{#3}}%
  \IfBooleanF{#1}{\EE\nolimits\bra[#2]{#3}}%
}
\NewDocumentCommand{\cex}{s O{} m m}{%
  \IfBooleanT{#1}{\EE\nolimits\Bra*{#3}{#4}}%
  \IfBooleanF{#1}{\EE\nolimits\Bra[#2]{#3}{#4}}%
}

\usepackage{CJKutf8}


\newcommand{\email}[1]{\href{mailto:#1}{\nolinkurl{#1}}}

\usepackage{amsmath}
\usepackage{comment} 
\usepackage{cancel}

\usepackage{CJKutf8}

\newcommand{\red}[1]{{\color{red}{#1}}}

\allowdisplaybreaks

\usepackage{datetime}
\date{\vspace{-2.5\baselineskip}}

\author[1]{Yuya Hikima\footnote{Corresponding author. E-mail: \url{yuya-hikima@g.ecc.u-tokyo.ac.jp}}}
\author[1,2]{Akiko Takeda}

\affil[1]{Graduate School of Information Science and Technology, University of Tokyo, Tokyo, Japan}
\affil[2]{Center for Advanced Intelligence Project, RIKEN, Tokyo, Japan}

\title{Zeroth-order gradient estimators for stochastic problems with decision-dependent distributions}

\begin{document}
\maketitle

\begin{abstract}
Stochastic optimization problems with unknown \emph{decision-dependent distributions} have attracted increasing attention in recent years due to its importance in applications.
Since the gradient of the objective function is inaccessible as a result of the unknown distribution, various zeroth-order methods have been developed to solve the problem.
However, it remains unclear which search direction to construct a gradient estimator is more appropriate and how to set the algorithmic parameters.
In this paper, we conduct a unified sample complexity analysis of zeroth-order methods across gradient estimators with different search directions.
As a result, we show that gradient estimators that average over multiple directions, either uniformly from the unit sphere or from a Gaussian distribution, achieve the lowest sample complexity.
The attained sample complexities improve those of existing zeroth-order methods in the problem setting that allows nonconvexity and unboundedness of the objective function.
Moreover, by simulation experiments on multiple products pricing and strategic classification applications, we show practical performance of zeroth-order methods with various gradient estimators.
\end{abstract}

\section{Introduction}
In this work, we study a stochastic optimization problem with a \emph{decision-dependent distribution}. 
Specifically, we consider
\begin{align}
\min_{\bx \in \R^d} \quad F(\bx):=\E_{\bmxi \sim D(\bx)}[f(\bx,\bmxi)], \label{eq:opt_problem}  
\end{align}
where \(F:\R^d\to\R\) is generally nonconvex, \(f\) is differentiable, and the distribution \(D(\bx)\) is unknown; only samples drawn at a queried decision vector \(\bx\) are available.
This problem is the generalized problem for the classical stochastic problem:
\begin{align}
\min_{\bx \in \R^d} \quad \E_{\bmxi \sim D}[f(\bx,\bmxi)], \label{eq:opt_problem_no_dependent}  
\end{align}
with a decision-independent distribution \(D\).
The formulation \eqref{eq:opt_problem} is also referred to as the \emph{performative prediction} problem \citep{perdomo2020performative}, emphasizing that the data-generating process adapts to the deployed decision.

This problem has attracted increasing attention in recent years due to its importance in applications \citep{perdomo2020performative, mendler2020stochastic, chen2023performative}.
In pricing applications, a seller selects a price vector \(\bx\) to maximize revenue while demand follows a \emph{price-dependent} distribution \(D(\bx)\) \citep{ray2022decision, hikima2025zeroth}. 
In strategic classification applications, a decision-maker trains model parameters \(\bx\)  to classify agents into groups (e.g., loan approval and denial) while affected agents may adapt their feature in response to the deployed model, inducing a \emph{parameter-dependent} distribution \(D(\bx)\) \citep{levanon2021strategic, liu2023time, cyffers2024optimal}. 
A key consideration in solving problem~\eqref{eq:opt_problem} is the \emph{sample complexity}, i.e., the number of samples required to find a desirable point (e.g., a stationary or optimal point).
An increase of the number of sampling is undesirable in practice, because each sample $\bmxi \sim D(\bx)$ can be obtained only after deploying a decision $\bx$ in the real world.

Since the gradient of the objective function is inaccessible as a result of the distribution $D(\bx)$ being unknown, 
various zeroth-order (or gradient-free) methods have been proposed \citep{ray2022decision,chen2023performative,liu2023time,hikima2025guided,frankel2025finite,hikima2025zeroth}.
These methods update the decision vector using gradient estimators constructed from function evaluation(s).
Typical gradient estimators \citep{ray2022decision,liu2023time,chen2023performative,hikima2025zeroth} for problem \eqref{eq:opt_problem} take the following forms:
\begin{align}
\begin{split}
&\textrm{one-point gradient estimator:} \quad \bg:= \frac{1}{2\mu} \left(\tilde{F}(\bx+\mu \bv)\right) \bv, \\
&\textrm{two-point gradient estimator:} \quad \bg:= \frac{1}{2\mu} \left(\tilde{F}(\bx+\mu \bv)-\tilde{F}(\bx - \mu \bv)\right) \bv, \label{eq:standard_form_intro}
\end{split}
\end{align}
where $\mu \in \R_{>0}$ is a smoothing parameter and $\bv \in \R^d$ is a search direction that depends on the method.
Here, $\tilde{F}$ is the approximation of $F$ defined by:
$\tilde{F}(\bx):= \sum_{j=1}^m f(\bx,\bmxi^{(j)}),$
where $\bmxi^{(j)} \sim D(\bx)$.

While a growing literature has proposed various zeroth‑order methods with different gradient estimators, two key questions remain open: (i) which search direction (e.g., coordinate-wise directions; directions sampled uniformly from the unit sphere) is preferable, and (ii)  whether an estimator that averages over multiple directions such as
\begin{align}
\bg:= \frac{1}{2\mu N} \sum_{i=1}^N \left(\tilde{F}(\bx+\mu \bv^i)-\tilde{F}(\bx - \mu \bv^i)\right) \bv^i \quad (N\ge 2) 
\end{align}
is superior to \eqref{eq:standard_form_intro} and, if so, how one should set the number $N$.
This is because, although individual studies analyze the convergence of their zeroth-order methods with different gradient estimators, they assume distinct problem setting, making direct comparisons difficult.
For example, \citet{ray2022decision,liu2023time} show the convergence rates for their methods which use a direction uniformly from the unit sphere, considering \emph{bounded} objective function and \emph{time-varying} decision-dependent distributions.
On the other hand, \citet{hikima2025guided,hikima2025zeroth} show the convergence rates for their zeroth-order methods which use a direction following a Gaussian distribution, considering \emph{unbounded} objective function and \emph{time-invariant} decision-dependent distributions. 
Moreover, all of the above analyze the estimator with 
a single direction per iteration; whether using multiple directions ($N\ge 2$) improves sample complexity remains unclear.

In this paper, we conduct a unified sample complexity analysis across different gradient estimators under a problem setting where problem \eqref{eq:opt_problem} is nonconvex, the objective function is generally unbounded, and the distribution is time-invariant, together with appropriate selections of the algorithmic parameters.
Specifically, we consider gradient estimators that use coordinate-wise directions, directions drawn uniformly from the unit sphere, or directions following a Gaussian distribution.
Our proofs are build on techniques from  \citep{berahas2022theoretical}, which analyzes the errors between the true gradient and various gradient estimators  in a setting different from ours in terms of assumptions on noise (see Section \ref{subsec:zo_related}); under our assumptions, however, the resulting conclusions differ from theirs.

Consequently, we obtain three key findings. 
\begin{enumerate}
    \item For gradient estimators based on random directions, using more than one direction per iteration ($N>1$) effectively suppresses the error between the gradient estimator and the true gradient (Remark~\ref{rem:N}).
\item 
Averaging over multiple directions, sampled either uniformly from the unit sphere or from a Gaussian distribution, achieves lower sample complexity than that of coordinate-wise directions. This yields the lowest complexity among existing zeroth-order methods in problem settings that allow nonconvexity and unboundedness of the objective function (Table \ref{table:SpeedOfLight}).
\item  After appropriately scaling the smoothing parameters, gradient estimators using directions uniformly distributed on the unit sphere and those using Gaussian directions yield equivalent estimation error relative to the true gradient (Remark \ref{rem:ga_sp_order_same}).
\end{enumerate}

We conducted simulation experiments on multiple products pricing and strategic classification applications. 
The results show that, consistent with our theoretical analysis, zeroth-order methods employing gradient estimators with directions following a Gaussian distribution or directions uniformly sampled from the unit sphere achieve superior performance under our assumptions.

\subsection{Notation}
Bold lowercase symbols (e.g., $\bx, \by$) denote vectors, and $\|\bx\|$ denotes the Euclidean norm of a vector $\bx$. 
The inner product of the vectors $\bx, \by$ is denoted by $\bx^\top \by$. 
Let $\mR_{>0}$ ($\mR_{\ge 0}$) be the set of positive (non-negative) real numbers and $\N$ be the set of natural numbers.
The gradient for a real-valued function $f(\bx)$ is denoted by $\nabla f(\bx)$. 
We let $\bI$ be an identity matrix and let $\maN(0, \bI)$ be the standard Gaussian distribution. 
Let $[N]$ be the set $\{1,2,\dots,N\}$.

\section{Related work}
\subsection{Zeroth-order optimization methods} \label{subsec:zo_related}
Zeroth-order methods have been developed for solving optimization problems where gradient information is either unavailable or computationally expensive to calculate \citep{flaxman2004online,ghadimi2013stochastic, nesterov2017random,berahas2022theoretical}.
Recently, various studies have proposed zeroth-order methods for solving problems with decision-dependent distributions (see Table~\ref{table:SpeedOfLight}).\footnote{
In Table \ref{table:SpeedOfLight}, \cite{izzo2021learn,izzo2022learn} and \cite{hikima2025guided} use gradient information of $f$ (not $F$) inside the expectation and thus, in a strict sense, are not zeroth-order methods.
However, because their gradient estimators are computed from function evaluations via finite differences or Gaussian perturbations, we include them in the table.}
These studies consider different problem settings and employ different gradient estimators.
For example, \cite{ray2022decision,liu2023time} adopt gradient estimators using directions sampled uniformly from the unit sphere, whereas \cite{hikima2025guided} employs directions drawn from a Gaussian distribution.
While a growing literature has employed diverse zeroth-order gradient estimators, it remains unclear which search directions are more appropriate.
In this work, for nonconvex problem (P), we analyze the sample complexities of zeroth-order methods that employ gradient estimators with different search directions.

Our theoretical analysis adapts the proof strategy of \citep{berahas2022theoretical}, which analyzes the errors between the true gradient and various gradient estimators in a different setting. The key distinction is that they assume bounded and possibly biased noise, whereas we assume generally unbounded and unbiased noise. This difference leads to distinct theoretical conclusions: \cite{berahas2022theoretical} show that gradient estimators with orthogonal directions attain higher accuracy than those with random directions, while our analysis demonstrates that estimators with random directions yield stronger convergence guarantees than those with orthogonal directions (see Section \ref{subsec:comparison}).

\begin{table}[t]
\caption{Overview of existing zeroth-order methods for problem \eqref{eq:opt_problem}. 
We state the order of the sample complexity with respect to the target accuracy $\epsilon$, the dimension $d$, and $\ell := \max_{\bx,\bmxi} f(\bx,\bmxi)$.
The column of \emph{Required Lipschitzness} extracts assumptions only for the objective function \(F\); conditions on the Lipschitzness of the distribution \(D(x)\) or the function \(f\) are omitted.
Other notes: \red{(a)} These methods reduce nonconvex optimization problems to convex ones under additional structural assumptions; 
\red{(b)} The result holds with probability $1-p$;
\red{(c)} This result indicates regret relative to the sample size $N_s$ (when the sample size $N_{KL}$ in their paper is a constant);
\red{(d)} Rather than directly approximating the gradient, these works approximate a component (also referred to as the ``performative part'') of the gradient;
\red{(e)} The result comes from \cite[Theorem 5]{izzo2021learn};
\red{(f)} The result comes from \cite[Theorem 1 and Lemma 17]{izzo2021learn};
\red{(g)} The former result holds when $\nabla F(x)$ is Lipschitz continuous, while the latter result holds when both $\nabla F(x)$ and $\nabla^2 F(x)$ are Lipschitz continuous.
\label{table:SpeedOfLight}}
 \scalebox{0.64}{
  \begin{tabular}{cllllll}
   \hline
 & Name  & Noncovex & Direction & Metric& Sample Complexity \\
    \hline
\cite{ray2022decision}&Epoch-Based ZO  &\text{\texttimes}\ \red{(a)} &Unit sphere&$\E[\|\hat{\bx}-\bx^*\|^2] \le \epsilon^2$ &$O(\ell^2 d^2 \epsilon^{-4})$\\
\cite{chen2023performative}&Bandit Algorithm&\text{\texttimes}\ \red{(a)}&Unit sphere&$R:=\sum_{s=1}^{N_s} F(\bx_s)-N_s F(\bx^*)$ & $R=\tilde{O}(d N_s^{\frac{5}{6}}(\log{p^{-1}})^{\frac{1}{2}})$\ \red{(b), (c)} \\
\cite{frankel2025finite}&Derivative Free Method &\text{\texttimes}\ \red{(a)}&Unit sphere&$\E[\|\bx-\bx^*\|^2]\le \epsilon^2$ &$O(\ell^2 d^2 \epsilon^{-4})$\\
\cite{izzo2021learn} \red{(d)}&PerfGD& $\checkmark$ &Finite difference&$\min_{t} \|\nabla F(\bx_t)\|^2 \le \epsilon^2 + error$&$O(\ell^2 d^{\frac{3}{2}} \epsilon^{-4})$ \red{(e)}\\
\cite{izzo2022learn} \red{(d)}&Stateful PerfGD& $\checkmark$ &Finite difference &$\min_{t} \|\nabla F(\bx_t)\|^2 \le \epsilon^2 + error$&$\tilde{O}(\ell^9 d^{\frac{51}{2}} \epsilon^{-10}(\log{p^{-1}})^{\frac{45}{2}})$ \red{(b), (f)}\\
\cite{liu2023time}&DFO$(\lambda)$ Algorithm & $\checkmark$ &Unit sphere&$\E[\|\nabla F(\hat{\bx})\|]\le \epsilon$&$\tilde{O}(\ell^6 d^2 \epsilon^{-6})$\\
\cite{hikima2025zeroth}& Variance-reduced ZO &$\checkmark$&Gaussian&$\E[\|\nabla F(\hat{\bx})\|]\le \epsilon$&$O(d^{\frac{9}{2}} \epsilon^{-6})$\\
\cite{hikima2025guided}& Guided ZO &$\checkmark$&Gaussian&$\E[\|\nabla F(\hat{\bx})\|]\le \epsilon$&$O(d^4 \epsilon^{-6})$\\
\hline
\textbf{Ours} &Simple ZO&$\checkmark$&Coordinate-wise&$\E[\|\nabla F(\hat{\bx})\|]\le \epsilon$&$O(d^3 \epsilon^{-6})$ or $O(d^{\frac{5}{2}} \epsilon^{-5})$ \red{(g)} \\
& &$\checkmark$&Multiple Unit Sphere&$\E[\|\nabla F(\hat{\bx})\|]\le \epsilon$&$O(d^2\epsilon^{-6})$ or $O(d^2\epsilon^{-5})$ \red{(g)} \\
& &$\checkmark$&Multiple Gaussian&$\E[\|\nabla F(\hat{\bx})\|]\le \epsilon$&$O(d^2 \epsilon^{-6})$ or $O(d^2 \epsilon^{-5})$ \red{(g)} \\
\hline
\end{tabular}}
\end{table}

\subsection{Alternative approaches for stochastic problems with decision-dependent distributions}
\label{subsec:related_decision_dependent_noise}
We briefly review methods other than zeroth-order methods for problem~\eqref{eq:opt_problem}.

\paragraph{Retraining methods \citep{perdomo2020performative,mendler2020stochastic,mofakhami2023performative,NEURIPS2024_1055c730}.}
These methods seek a \emph{performatively stable point} 
\(
\bx_{\mathrm{PS}}
\in \arg\min_{\bx}\,
\mathbb{E}_{\bmxi \sim D(\bx_{\mathrm{PS}})}[f(\bx,\bmxi)]
\)
by repeatedly updating the decision vector while treating the distribution as fixed at the current iterate.
A typical method is \emph{repeated gradient descent} \citep{perdomo2020performative}:
$$
\bx_{k+1}
:= \mathrm{proj}_{\mathcal C}\!\big(\bx_k-\eta_k\,\mathbb{E}_{\bmxi\sim D(\bx_k)}[\nabla_{\bx} f(\bx_k,\bmxi)]\big),
$$
where $\mathcal C$ is the feasible set and $\mathrm{proj}_{\mathcal C}$ denotes Euclidean projection.
Subsequent work studies variants of this update \citep{mendler2020stochastic} and 
weaker assumptions on the objective \citep{mofakhami2023performative,NEURIPS2024_1055c730}.
However, the target of these methods is a \emph{performatively stable} solution, which may not exist in the nonconvex setting.

\paragraph{Stochastic first-order methods \citep{liu2024bayesian,hikima2025stochastic}.}
Assuming that $D(\bx)$ is known, these methods use the unbiased stochastic gradient for problem \eqref{eq:opt_problem} such as
\begin{align}
\nabla_{\bx} f(\bx,\bmxi)\;+\; f(\bx,\bmxi)\,\nabla_{\bx}\log \mpr(\bmxi\mid\bx),
\qquad \bmxi \sim D(\bx).
\label{eq:full_gradient_info}
\end{align}
While these methods achieve rapid convergence to stationary points, their assumption on known $D(\bx)$, i.e., access to 
$\nabla_{\bx}\log \mpr(\bmxi\mid\bx)$, makes them inapplicable to our setting, where $D(\bx)$ is unknown.

\paragraph{Distribution-modeling approaches \citep{miller2021outside,lin2024plug}.}
These methods estimate the distribution map $D(\cdot)$ and then optimize~\eqref{eq:opt_problem} using the estimated distribution.
They can perform well when the objective function and the distribution satisfy some assumptions.
For example, \cite{miller2021outside} assume that $f$ is strongly convex and $D(\bx)$ belongs to a location-scale family (i.e., the mean of random variables is defined by $A\bx$ for some constant matrix $A$ and the $n$-th order moment for $n \ge 2$ is independent of $\bx$).
Our approach does not impose such assumptions.

\paragraph{Search-based optimization \citep{jagadeesan2022regret,bergstra2012random,frazier2018tutorial,xue2020novel}.}
Search-based optimization such as Bayesian optimization \citep{frazier2018tutorial}, random search \citep{bergstra2012random}, and the sparrow search algorithm \citep{xue2020novel} can be used for problem \eqref{eq:opt_problem}.
In particular, \citet{jagadeesan2022regret} propose an efficient zooming algorithm for problem~\eqref{eq:opt_problem} that leverages knowledge about the objective \(f\).
While these search-based methods are effective for finding global optima, their sample complexity  grows exponentially with the dimension \(d\).
In contrast, our work focuses on finding a stationary point and establishes convergence guarantees with substantially lower sample complexity.

\section{Assumptions} \label{sec:assumption}
We make the following assumptions.
\begin{assumption} \label{asm:F_sample_var_bound}
For any $\bx \in \R^d$, there exists a constant $\sigma \in \R_{\ge 0}$ satisfying 
\begin{align*}
\E_{ \bmxi \sim D(\bx)} [ \left(F(\bx)-f(\bx, \bmxi)\right)^2] \le \sigma^2. 
\end{align*}
\end{assumption}

\begin{assumption} \label{asm:F_smooth}
$F(\bx)$ is $M$-smooth, that is,
$$F(\hat{\bx})\le F(\bx) + \nabla F(\bx)^\top (\hat{\bx}-\bx)+\frac{M}{2}\|\hat{\bx}-\bx\|^2$$
for any $\bx \in \R^d$ and $\hat{\bx}\in \R^d$.
\end{assumption}

Under Assumptions \ref{asm:F_sample_var_bound} and \ref{asm:F_smooth}, we can guarantee convergence of our method.
Moreover, if the following assumption holds in addition to Assumptions \ref{asm:F_sample_var_bound} and \ref{asm:F_smooth}, we can achieve lower sample complexities.

\begin{assumption}\label{asp:F_hessian_lipschitz}
The Hessian of $F$ is $H$-Lipschitz continuous for all $\bx \in \R^n$, that is,
$$F(\hat{\bx})\le F(\bx) + \nabla F(\bx)^\top (\hat{\bx}-\bx)+\frac{1}{2} (\hat{\bx}-\bx)^\top \nabla^2 F(\bx) (\hat{\bx}-\bx)+\frac{H}{6}\|\hat{\bx}-\bx\|^3$$
for any $\bx \in \R^d$ and $\hat{\bx}\in \R^d$.
\end{assumption}

Assumption \ref{asm:F_sample_var_bound} is required for approximating $F(\bx)$ by $f(x,\xi)$ with sample $\xi$.
Since the objective function  involves random variables, such an assumption is needed to evaluate the objective value by its sample.
Assumption \ref{asm:F_smooth} is standard in convergence analysis, ensuring the accuracy of the first-order approximation via Taylor expansion.
This is because descent methods with (estimated) gradients can be seen as optimizing a first-order approximation of the objective function at each iteration.
Assumption~\ref{asp:F_hessian_lipschitz} further improves the accuracy of the first-order approximation, which leads to improved accuracy of the gradient estimate and faster convergence.
Regarding Assumptions \ref{asm:F_smooth} and \ref{asp:F_hessian_lipschitz}, the existing study \citep{ray2022decision} gives a sufficient condition.
It can be found in Appendix \ref{app:suf_condition}.

\paragraph{Comparison with assumptions in existing studies.}
We evaluate our assumptions compared to \citep{ray2022decision} and \citep{hikima2025guided}.
First, our assumptions are looser than \citep{ray2022decision}.
Assumption 5 in \citep{ray2022decision} implies that $E_{ \bmxi \sim D(\bx)} [ \left(F(\bx)-f(\bx, \bmxi)\right)^2] \le (2\ell)^2$ for $\ell:=\sup_{\bx,\bmxi}|f(\bx,\bmxi)|$, 
which yields our Assumption \ref{asm:F_sample_var_bound}. Assumption 3 in \citep{ray2022decision} yields our Assumptions~\ref{asm:F_smooth} and \ref{asp:F_hessian_lipschitz}. 
Conversely, we do not require Assumption~1 (a, c, d) or Assumption 2 in \citep{ray2022decision}.\footnote{We also do not assume Assumption~1(a) or Assumption~4 of \citet{ray2022decision} in our analysis. 
However, they may be invoked as optional sufficient conditions to verify Assumptions~\ref{asm:F_smooth} and~\ref{asp:F_hessian_lipschitz}; see Appendix~\ref{app:suf_condition} for details.
} 
Next, regarding \citep{hikima2025guided}, our assumptions do not need Assumptions 4.2 and 4.3 in \citep{hikima2025guided}; instead, in addition to Assumptions 4.1 and 4.4 in \citep{hikima2025guided}, we impose Assumption~\ref{asp:F_hessian_lipschitz} in this paper to achieve lower sample complexity.
However, without Assumption~\ref{asp:F_hessian_lipschitz}, our convergence analysis yields a strictly lower sample complexity than that of \citep{hikima2025guided}.

\section{Theoretical analysis for variants of gradient estimators}
\label{sec:theoretical_analysis}
Several options can be considered as gradient estimators for problem \eqref{eq:opt_problem}. 
In Section \ref{subsec:basic_form}, we first describe the basic form of gradient estimators for problem \eqref{eq:opt_problem}.
Then, in Section \ref{subsec:desired_prop}, we give a simple zeroth-order method and discuss the desirable property of gradient estimators.
Finally, in Section \ref{subsec:prop_each_method}, we evaluate each gradient estimator in terms of the desirable property and provide sample complexity analyses.

\subsection{Basic form of gradient estimator for problem \eqref{eq:opt_problem}} \label{subsec:basic_form}
As a zeroth-order gradient estimator for problem \eqref{eq:opt_problem}, we consider the following basic form:
\begin{align}
\bg:= &\frac{1}{2\mu} \sum_{i=1}^{N} \left(f(\bx+\mu \bv^i,\bmxi^{1,i})-f(\bx - \mu \bv^i,\bmxi^{2,i})\right) \bv^i. \label{eq:standard_form}
\end{align}
where $\mu \in \R_{>0}$, $\bv^i \in \R^d$, $\bmxi^{1,i} \sim D(\bx+\mu \bv^i)$, and $\bmxi^{2,i} \sim D(\bx-\mu \bv^i)$.
Here, $\bv^i$ is a set of directions that depends on the method:
for zeroth-order methods with coordinate-wise basis,  $\bv^i :=\bme_i$ and $N=d$, where $\bme_i$ is the $d$-dimensional vector whose $i$-th component is $1$ and all others are $0$;
for zeroth-order methods via smoothing on a sphere, 
$\bv^i:=\frac{d}{N} \bw^i$, where $\bw^i$ follows the uniform distribution on the unit sphere;
in the case of Gaussian smoothing methods, $\bv^i:= \frac{1}{N}\bu^i$, where $\bu^i$ is a random direction following a Gaussian distribution.
In Section \ref{subsec:prop_each_method}, we conduct a theoretical analysis of these variants.

\paragraph{Difference from gradient estimators for problem \eqref{eq:opt_problem_no_dependent}.}
For problem \eqref{eq:opt_problem_no_dependent} with a decision-independent distribution, the following gradient estimator has been proposed \citep{ghadimi2013stochastic,iwakiri2022single}:
\begin{align}
\bg':= &\frac{1}{2\mu} \sum_{i=1}^{N} \left(f(\bx+\mu \bv^i,\bmxi^i)-f(\bx,\bmxi^i)\right) \bv^i, \label{eq:normal_basic_form}
\end{align}
where $\mu \in \R_{>0}$, $\bv^i \in \R^d$, and $\bmxi^{i} \sim D$ for $i=1, \dots, d$.
The gradient estimator \eqref{eq:normal_basic_form} differs from \eqref{eq:standard_form} in that it uses the same random variable $\bmxi^i$ in both the first and second terms of the estimator. 
This has a substantial impact on the accuracy of the gradient estimator. 
For the estimator \eqref{eq:normal_basic_form}, the estimation accuracy improves as the smoothing parameter $\mu$ decreases; 
accordingly, prior analyses have typically established convergence guarantees by taking $\mu$ sufficiently small \cite[Corollary 3.3]{ghadimi2013stochastic}. 
In contrast, in the estimator \eqref{eq:standard_form}, the random variables in the forward and backward terms differ, which introduces noise. 
This noise is amplified as $\mu$ becomes smaller, necessitating an appropriate choice of $\mu$.

\subsection{Descent algorithm with gradient estimator}
\label{subsec:desired_prop}
We consider the following Algorithm \ref{alg:simple}.
\begin{algorithm}[h]
\caption{Descent algorithm with gradient estimator}
\label{alg:simple}
\begin{algorithmic}[h] 
\State {\bfseries Input:} total number of iterations $T$, step-size parameter $\eta$, and initial iterate $\bx_0\in \R^d$
\For{$t = 0, 1, 2, \ldots, T$}
\State Obtain an approximated gradient $\bg_t$, allowing $\bg_t$ including randomness
\State Update $\bx_{t+1} \gets \bx_t - \eta \bg_t$
\EndFor
\State {\bfseries Output:} $\bar{\bx}$ chosen uniformly random from $\{\bx_t\}_{t=0}^{T}$
\end{algorithmic}
\end{algorithm}

\newpage

Then, for Algorithm \ref{alg:simple}, the following lemma holds.
\begin{lemma} \label{lem:simple_descent}
Suppose that Assumption \ref{asm:F_smooth} and $\eta\le \frac{1}{4M}$. 
Then, Algorithm \ref{alg:simple} obtains $\bar{\bx}$ such that
\[
\E[\|\nabla F(\bar{\bx})\|^2] 
 \le \frac{4(F(\bx_0) - F^*)}{\eta (T+1)}  + \frac{3}{T+1} \sum_{t=0}^T\E_{\bg_{[t]}}[\|\nabla F(\bx_t)-\bg_t\|^2],
\]
where $F^* := \min_{\bx} F(\bx)$ and $\bg_{[t]}:=\{\bg_1, \dots, \bg_t\}$.
\end{lemma}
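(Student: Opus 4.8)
The plan is to establish a single per-iteration descent inequality that holds \emph{pathwise} (for every realization of the randomness in $\bg_0,\dots,\bg_T$), then sum over $t$ to telescope the function values, and only take expectations at the very end. Writing the estimation error as $\bm{e}_t := \nabla F(\bx_t)-\bg_t$, the whole lemma reduces to proving the one-step bound
\begin{equation*}
F(\bx_{t+1}) \le F(\bx_t) - \tfrac{\eta}{4}\,\|\nabla F(\bx_t)\|^2 + \tfrac{3\eta}{4}\,\|\bm{e}_t\|^2 ,
\end{equation*}
after which everything else is routine bookkeeping.

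First I would apply Assumption~\ref{asm:F_smooth} at $\hat{\bx}=\bx_{t+1}=\bx_t-\eta\bg_t$ to get
\begin{equation*}
F(\bx_{t+1}) \le F(\bx_t) - \eta\,\nabla F(\bx_t)^\top \bg_t + \tfrac{M\eta^2}{2}\,\|\bg_t\|^2 ;
\end{equation*}
this is the only place smoothness enters, and it is deterministic, so no expectation is taken yet. Next I would substitute $\bg_t = \nabla F(\bx_t) - \bm{e}_t$ into both terms. For the linear term, $-\eta\,\nabla F(\bx_t)^\top\bg_t = -\eta\|\nabla F(\bx_t)\|^2 + \eta\,\nabla F(\bx_t)^\top\bm{e}_t$, and I would bound the cross term by Young's inequality $\nabla F(\bx_t)^\top\bm{e}_t \le \tfrac12\|\nabla F(\bx_t)\|^2 + \tfrac12\|\bm{e}_t\|^2$. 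For the quadratic term I would use $\|\bg_t\|^2 \le 2\|\nabla F(\bx_t)\|^2 + 2\|\bm{e}_t\|^2$ together with the stepsize restriction $\eta\le\tfrac{1}{4M}$, which gives $M\eta^2\le\tfrac{\eta}{4}$ and hence $\tfrac{M\eta^2}{2}\|\bg_t\|^2 \le \tfrac{\eta}{4}\|\nabla F(\bx_t)\|^2 + \tfrac{\eta}{4}\|\bm{e}_t\|^2$. Collecting the $\|\nabla F(\bx_t)\|^2$ coefficients gives $-\eta+\tfrac{\eta}{2}+\tfrac{\eta}{4}=-\tfrac{\eta}{4}$ and the $\|\bm{e}_t\|^2$ coefficients give $\tfrac{\eta}{2}+\tfrac{\eta}{4}=\tfrac{3\eta}{4}$, which is exactly the one-step inequality above.

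Finally I would rearrange to $\tfrac{\eta}{4}\|\nabla F(\bx_t)\|^2 \le F(\bx_t)-F(\bx_{t+1}) + \tfrac{3\eta}{4}\|\bm{e}_t\|^2$, sum over $t=0,\dots,T$ so the function values telescope, and use $-F(\bx_{T+1})\le -F^*$. Dividing by $\tfrac{\eta(T+1)}{4}$ yields
\begin{equation*}
\frac{1}{T+1}\sum_{t=0}^T \|\nabla F(\bx_t)\|^2 \le \frac{4\,(F(\bx_0)-F^*)}{\eta(T+1)} + \frac{3}{T+1}\sum_{t=0}^T \|\bm{e}_t\|^2 .
\end{equation*}
Taking the total expectation and using that $\bar{\bx}$ is drawn uniformly from $\{\bx_t\}_{t=0}^T$, so that $\E[\|\nabla F(\bar{\bx})\|^2]=\tfrac{1}{T+1}\sum_{t}\E[\|\nabla F(\bx_t)\|^2]$ and $\E[\|\bm{e}_t\|^2]=\E_{\bg_{[t]}}[\|\nabla F(\bx_t)-\bg_t\|^2]$ (since $\bx_t$ and $\bg_t$ are functions of $\bg_{[t]}$), gives precisely the stated bound.

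Since this is a standard inexact-gradient descent analysis, I do not expect a genuine obstacle; the only points requiring care are (i) keeping the chain of inequalities pathwise until the very last step, so that no unbiasedness or independence property of $\bg_t$ is silently used (the lemma deliberately treats $\bg_t$ as an arbitrary random vector), and (ii) selecting the Young's-inequality weight so that the constants emerge as $4$ and $3$ rather than some other admissible pair.
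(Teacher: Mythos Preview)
Your proposal is correct and follows essentially the same approach as the paper: apply $M$-smoothness at $\bx_{t+1}=\bx_t-\eta\bg_t$, split the cross term with Young's inequality, bound $\|\bg_t\|^2\le 2\|\nabla F(\bx_t)\|^2+2\|\bm{e}_t\|^2$, use $\eta\le\tfrac{1}{4M}$ to collect coefficients into $-\tfrac{\eta}{4}$ and $\tfrac{3\eta}{4}$, then telescope and average. The only cosmetic difference is that the paper takes expectations with respect to $\bg_{[t]}$ immediately after rearranging the one-step inequality, whereas you keep everything pathwise until the very end; either ordering is valid here.
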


\begin{proof}
Using the smoothness of $F$ and the definition of $\bg_t$ from the algorithm, we have 
\begin{align*}
F(\bx_{t+1})
    &\le F(\bx_t) + \nabla F(\bx_t)^\top (\bx_{t+1}- \bx_t)+\frac{M}{2}\|\bx_{t+1}- \bx_t\|^2 \\
&= F(\bx_t) - \eta \nabla F(\bx_t)^\top \bg_t+\frac{M\eta^2}{2}\|\bg_t\|^2\\
&= F(\bx_t) - \eta\|\nabla F(\bx_t)\|^2 + \eta\nabla F(\bx_t)^\top (\nabla F(\bx_t)-\bg_t) + \frac{M\eta^2}{2}\|\bg_t\|^2\\
&\overset{(a)}{\le} F(\bx_t) - \frac{\eta}{2} \|\nabla F(\bx_t)\|^2 + \frac{\eta}{2}\|\nabla F(\bx_t)-\bg_t\|^2 + \frac{M\eta^2}{2}\|\bg_t\|^2\\
&=F(\bx_t) - \frac{\eta}{2} \|\nabla F(\bx_t)\|^2 + \frac{\eta}{2}\|\nabla F(\bx_t)-\bg_t\|^2 + \frac{M\eta^2}{2}\|\bg_t-\nabla F(\bx_t)+\nabla F(\bx_t)\|^2\\
&\overset{(b)}{\le} F(\bx_t) - \frac{\eta}{2} \|\nabla F(\bx_t)\|^2 + \frac{\eta}{2}\|\nabla F(\bx_t)-\bg_t\|^2 + M\eta^2 \|\nabla F(\bx_t)-\bg_t\|^2 + M\eta^2 \|\nabla F(\bx_t)\|^2\\
&\overset{(c)}{\le}F(\bx_t) - \frac{\eta}{4} \|\nabla F(\bx_t)\|^2 + \frac{3\eta}{4} \|\nabla F(\bx_t)-\bg_t\|^2,
\end{align*}
where (a) is due to the fact that
\begin{align*}
 \eta\nabla F(\bx_t)^\top (\nabla F(\bx_t)-\bg_t)= (\sqrt{\eta}\nabla F(\bx_t))^\top (\sqrt{\eta} (\nabla F(\bx_t)-\bg_t))
\le \frac{\eta}{2}\|\nabla F(\bx_t)\|^2 + \frac{\eta}{2} \|\nabla F(\bx_t)-\bg_t\|^2
\end{align*}
from Young's inequality, (b) uses $\|\bx+\by\|^2\leq 2\|\bx\|^2+2\|\by\|^2$, and (c) uses $\eta\le \frac{1}{4M}$.

Rearranging terms and taking the expectation with respect to $\bg_{[t]}$,
\begin{align*}
\E_{\bg_{[t]}}[\|\nabla F(\bx_t)\|^2] 
&\le \E_{\bg_{[t]}}\left[\frac{4}{\eta} (F(\bx_t) - F(\bx_{t+1})) + 3 \|\nabla F(\bx_t)-\bg_t\|^2\right]\\
& =\frac{4}{\eta}(\E_{\bg_{[t-1]}}[F(\bx_t)] - \E_{\bg_{[t]}}[F(\bx_{t+1})]) + 3\E_{\bg_{[t]}}[\|\nabla F(\bx_t)-\bg_t\|^2].
\end{align*}

Taking average over $t=0, 1,2,\cdots,T$ on both sides of the above inequality, we have
\begin{align*}
\frac{1}{T+1}\sum_{t=0}^T\E_{\bg_{[t]}}[\|\nabla F(\bx_t)\|^2] 
&= \frac{4(F(\bx_0) - \E_{\bg_{[T]}}[F(\bx_{T+1})])}{\eta (T+1)}  + \frac{3}{T+1} \sum_{t=0}^T\E_{\bg_{[t]}}[\|\nabla F(\bx_t)-\bg_t\|^2] \\
& \overset{(a)}{\le} \frac{4(F(\bx_0) - F^*)}{\eta (T+1)}  + \frac{3}{T+1} \sum_{t=0}^T\E_{\bg_{[t]}}[\|\nabla F(\bx_t)-\bg_t\|^2],
\end{align*}
where (a) comes from the fact that $F^*= \min_{\bx} F(\bx)  \le \E_{\bg_{[T]}} [F(\bx_{T+1})]$.
\end{proof}

Lemma \ref{lem:simple_descent} indicates that if $\E_{\bg_{[t]}}\left[\|\nabla F(\bx_t)-\bg_t\|^2\right]$ is small, then the expected gradient norm of the output solution becomes small, bringing the iterate closer to a stationary point. 
Therefore, by evaluating $\E_{\bg_{[t]}}\left[\|\nabla F(\bx_t)-\bg_t\|^2\right]$ for each gradient estimator, we discuss which gradient estimators are effective for problem \eqref{eq:opt_problem}.

\subsection{Theoretical analysis of various gradient estimators}
\label{subsec:prop_each_method}
In this section, we analyze $\E\left[\|\nabla F(\bx)-\bg\|^2\right]$ and the sample complexity for gradient estimators  that use (i) coordinate-wise directions, (ii) directions sampled uniformly from the unit sphere, and (iii) directions drawn from a multivariate Gaussian distribution.
Our proofs mimic the proofs in  \citep{berahas2022theoretical}.

\subsubsection{Gradient estimator with coordinate-wise directions} \label{subsec:grad_est_coordinate}
The gradient estimator with coordinate-wise directions can be written as follows: 
\begin{align}
\bg^{\rm{co}}(\bx, \Xi_1, \Xi_2):= &\frac{1}{2\mu} \sum_{i=1}^{d} \left(f(\bx+\mu \bme_i,\bmxi^{1,i})-f(\bx - \mu \bme_i,\bmxi^{2,i})\right) \bme_i,\label{eq:one_batch_gradient_est}
\end{align}
where $\mu \in \R_{>0}$ and $\bme_i$ is a $d$-dimensional vector whose $i$-th component is $1$ and all others are $0$. 
Moreover, $\Xi_1:=\{\bmxi^{1,i}\}_{i=1}^{d}$ and $\Xi_2:=\{\bmxi^{2,i}\}_{i=1}^{d}$, where $\bmxi^{1,i} \sim D(\bx+\mu \bme_i)$ and $\bmxi^{2,i} \sim D(\bx-\mu \bme_i)$ for $i=1, \dots, d$.
We consider the mini-batch version of the gradient estimator:
\begin{align}
\bg^{\rm{co}}_m(\bx, \{\Xi_1^{(j)}\}_{j=1}^m, \{\Xi_2^{(j)}\}_{j=1}^m): =\frac{1}{m}\sum_{j=1}^{m} \bg^{\rm{co}}(\bx,\Xi_1^{(j)},\Xi_2^{(j)}), \label{eq:mini_batch_gradient_est}
\end{align}
where $m \in \N$ is the mini-batch size for $\bmxi$.
We also discuss the appropriate choice of $m$ in our analysis.

Then, the following lemmas hold for the gradient estimator.
\begin{lemma}
\label{lem:g_variance_FD}
Suppose that Assumptions \ref{asm:F_sample_var_bound} and \ref{asm:F_smooth} hold.
Let $\bx \in \R^d$, $\Xi_1^{(j)}:=\{\bmxi^{1,i,(j)}\}_{i=1}^{d}$, and $\Xi_2^{(j)}:=\{\bmxi^{2,i,(j)}\}_{i=1}^{d}$, where $\bmxi^{1,i,(j)} \sim D(\bx+\mu \bme_i)$ and $\bmxi^{2,i,(j)} \sim D(\bx-\mu \bme_i)$ for $i \in [d]$ and $j \in [m]$.
Then, the following holds.
\begin{align}
&\E_{ \{\Xi_1^{(j)}\}_{j=1}^{m},\{\Xi_2^{(j)}\}_{j=1}^{m}} \left[ \left\| \bg^{\rm{co}}_m(\bx, \{\Xi_{1}^{(j)}\}_{j=1}^m, \{\Xi_{2}^{(j)}\}_{j=1}^m)  - \nabla F(\bx) \right\|^2  \right] \le \frac{3\sigma^2d}{2\mu^2 m} +\frac{3M^2 d\mu^2}{4}.
\end{align}
\normalsize
\end{lemma}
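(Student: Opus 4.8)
The plan is to split the estimation error into sampling noise coming from the forward point, sampling noise from the backward point, and a deterministic finite-difference bias, exploiting that the coordinate directions $\bme_1,\dots,\bme_d$ are orthonormal so that squared norms decompose coordinatewise. Writing the $i$-th entry of the $j$-th single-sample estimator as $\frac{1}{2\mu}\bigl(f(\bx+\mu\bme_i,\bmxi^{1,i,(j)})-f(\bx-\mu\bme_i,\bmxi^{2,i,(j)})\bigr)$, I would first note that $\E[f(\bx\pm\mu\bme_i,\bmxi)]=F(\bx\pm\mu\bme_i)$ by the definition of $F$, so the mean of each entry is the central finite difference $\tfrac{1}{2\mu}(F(\bx+\mu\bme_i)-F(\bx-\mu\bme_i))$. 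I would then decompose
\begin{align*}
\bg^{\rm co}_m-\nabla F(\bx)=\bm{A}-\bm{B}+\bm{C},
\end{align*}
where $\bm{A}:=\frac{1}{2\mu m}\sum_{j}\sum_i\bigl(f(\bx+\mu\bme_i,\bmxi^{1,i,(j)})-F(\bx+\mu\bme_i)\bigr)\bme_i$ is the forward noise, $\bm{B}$ is the analogous backward noise, and $\bm{C}:=\sum_i\bigl(\tfrac{1}{2\mu}(F(\bx+\mu\bme_i)-F(\bx-\mu\bme_i))-[\nabla F(\bx)]_i\bigr)\bme_i$ is the deterministic bias. Applying $\|\bm{A}-\bm{B}+\bm{C}\|^2\le 3(\|\bm{A}\|^2+\|\bm{B}\|^2+\|\bm{C}\|^2)$ reduces the task to three terms, and this step is exactly what produces the factor $3$ in the claimed bound (a clean bias–variance split would give a sharper constant, but the three-way split mirrors the proof strategy of \citep{berahas2022theoretical}).

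For the two noise terms, I would use orthonormality of $\{\bme_i\}$ to write $\E\|\bm{A}\|^2=\sum_i\E[A_i^2]$, observe that each coordinate average $A_i$ has mean zero, and invoke independence across the $m$ i.i.d.\ batches to get $\E[A_i^2]=\tfrac{1}{4\mu^2 m}\,\VV(f(\bx+\mu\bme_i,\bmxi^{1,i}))$. Assumption~\ref{asm:F_sample_var_bound} bounds each such variance by $\sigma^2$, yielding $\E\|\bm{A}\|^2\le\tfrac{d\sigma^2}{4\mu^2 m}$ and likewise $\E\|\bm{B}\|^2\le\tfrac{d\sigma^2}{4\mu^2 m}$. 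For the bias, each coordinate entry of $\bm{C}$ is the error of a central finite difference, bounded in magnitude by $\tfrac{M\mu}{2}$ via $M$-smoothness (Assumption~\ref{asm:F_smooth}); summing squares over the $d$ coordinates gives $\|\bm{C}\|^2\le\tfrac{dM^2\mu^2}{4}$. Substituting the three bounds into the $3(\cdots)$ inequality gives $\tfrac{3\sigma^2 d}{2\mu^2 m}+\tfrac{3M^2 d\mu^2}{4}$, as claimed.

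The variance computation is routine; the one point to handle with care is the central-difference bias bound, which needs both an upper and a lower Taylor estimate $|F(\bx\pm\mu\bme_i)-F(\bx)\mp\mu[\nabla F(\bx)]_i|\le\tfrac{M\mu^2}{2}$. Since Assumption~\ref{asm:F_smooth} is stated only as the one-sided descent inequality, for the lower estimate I would invoke the standard two-sided bound that holds for any $M$-smooth (Lipschitz-gradient) function; this is where I expect the only friction. The coordinatewise orthogonality is what keeps the argument clean, since it makes the squared-norm sums split across coordinates and prevents any dimension-dependent cross-term blow-up in the noise terms.
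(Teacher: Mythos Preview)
Your proposal is correct and follows essentially the same route as the paper: the same three-way split $\bm{A}-\bm{B}+\bm{C}$, the same $3(\cdot)$ inequality, coordinatewise decomposition of the noise via orthonormality and Assumption~\ref{asm:F_sample_var_bound} with the mini-batch variance reduction, and the central-difference bias bound from $M$-smoothness. The only cosmetic difference is that the paper outsources your ``friction'' point (the two-sided Taylor estimate from the one-sided descent inequality) to the cited technical lemma $\left\|\sum_i\frac{F(\bx+\mu\bme_i)-F(\bx-\mu\bme_i)}{2\mu}\bme_i-\nabla F(\bx)\right\|\le\frac{M\sqrt{d}\mu}{2}$ from \cite{khanh2024globally}, rather than rederiving it.
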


\begin{proof}
First, we have
\begin{align*}
&   \E_{ \{\Xi_1^{(j)}\}_{j=1}^{m},\{\Xi_2^{(j)}\}_{j=1}^{m}} \left[ \left\| \bg^{\rm{co}}_m(\bx, \{\Xi_{1}^{(j)}\}_{j=1}^m, \{\Xi_{2}^{(j)}\}_{j=1}^m)   - \nabla F(\bx) \right\|^2  \right] \\
&=   \E_{ \{\Xi_1^{(j)}\}_{j=1}^{m},\{\Xi_2^{(j)}\}_{j=1}^{m}} \left[ \left\|\sum_{i=1}^{d}\frac{\sum_{j=1}^{m} f(\bx+\mu \bme_i, \bmxi^{1,i,(j)})-\sum_{j=1}^{m} f(\bx-\mu \bme_i, \bmxi^{2,i,(j)}) }{2m\mu}\bme_i  -  \nabla F(\bx) \right\|^2  \right]  \\
&= \E_{ \{\Xi_1^{(j)}\}_{j=1}^{m},\{\Xi_2^{(j)}\}_{j=1}^{m}} \Bigg[ \Bigg\| \sum_{i=1}^{d} \frac{F(\bx+\mu \bme_i)-F(\bx - \mu \bme_i)}{2\mu}\bme_i - \nabla F(\bx)\\
 & \hspace{40mm} + \sum_{i=1}^{d}\frac{\frac{1}{m}\sum_{j=1}^{m} f(\bx+\mu \bme_i, \bmxi^{1,i,(j)}) - F(\bx+\mu \bme_i)}{2\mu}\bme_i
 \\
    & \hspace{40mm} + \sum_{i=1}^{d}\frac{-\frac{1}{m}\sum_{j=1}^{m} f(\bx-\mu \bme_i, \bmxi^{2,i,(j)}) +F(\bx-\mu \bme_i)}{2\mu} \bme_i\Bigg\|^2 \Bigg] \\
&\overset{(a)}{\le}  3\left\| \sum_{i=1}^{d}\frac{F(\bx+\mu \bme_i)-F(\bx-\mu \bme_i)}{2\mu}\bme_i  - \nabla F(\bx) \right\|^2 \\
    & \quad + 3\E_{ \{\Xi_1^{(j)}\}_{j=1}^{m}} \Bigg[ \bigg\| \sum_{i=1}^{d}\frac{\frac{1}{m}\sum_{j=1}^{m} f(\bx+\mu \bme_i, \bmxi^{1,i,(j)})-F(\bx+\mu \bme_i)}{2\mu} \bme_i \bigg\|^2 \Bigg] \\
    & \quad + 3\E_{\{\Xi_2^{(j)}\}_{j=1}^{m}} \Bigg[ \bigg\| \sum_{i=1}^{d}\frac{-\frac{1}{m}\sum_{j=1}^{m} f(\bx-\mu \bme_i, \bmxi^{2,i,(j)}) +F(\bx-\mu \bme_i)}{2\mu}\bme_i \bigg\|^2 \Bigg] \\
&= 3\left\| \sum_{i=1}^{d}\frac{F(\bx+\mu \bme_i)-F(\bx-\mu \bme_i)}{2\mu}\bme_i  - \nabla F(\bx) \right\|^2 \\
& \quad + 3\E_{ \{\Xi_1^{(j)}\}_{j=1}^{m}} \Bigg[\sum_{i=1}^{d}\left(\frac{\frac{1}{m}\sum_{j=1}^{m} f(\bx+\mu \bme_i, \bmxi^{1,i,(j)})-F(\bx+\mu \bme_i)}{2\mu} \right)^2 \Bigg] \\
& \quad + 3\E_{\{\Xi_2^{(j)}\}_{j=1}^{m}} \Bigg[\sum_{i=1}^{d}\left( \frac{-\frac{1}{m}\sum_{j=1}^{m} f(\bx-\mu \bme_i, \bmxi^{2,i,(j)}) +F(\bx-\mu \bme_i)}{2\mu} \bigg)^2 \right] 
\end{align*}
where (a) follows from Lemma \ref{lem:1_to_n_two_norm_bound}.
Then, from  Assumption~\ref{asm:F_sample_var_bound} and Lemma \ref{lem:minibatch_var_reduction},
\begin{align}
&   \E_{ \{\Xi_1^{(j)}\}_{j=1}^{m},\{\Xi_2^{(j)}\}_{j=1}^{m}} \left[ \left\| \bg^{\rm{co}}_m(\bx, \{\Xi_{1}^{(j)}\}_{j=1}^m, \{\Xi_{2}^{(j)}\}_{j=1}^m)   - \nabla F(\bx) \right\|^2  \right] \nonumber \\
&\le  3\left\| \sum_{i=1}^{d}\frac{F(\bx+\mu \bme_i)-F(\bx-\mu \bme_i)}{2\mu}\bme_i  - \nabla F(\bx) \right\|^2 +\frac{3\sigma^2d}{4\mu^2 m}  + \frac{3\sigma^2d}{4\mu^2 m} \nonumber \\
&=
3\left\| \sum_{i=1}^{d}\frac{F(\bx+\mu \bme_i)-F(\bx-\mu \bme_i)}{2\mu}\bme_i  - \nabla F(\bx) \right\|^2 + \frac{3\sigma^2d}{2\mu^2 m} \label{prog:co_var_ineq}\\
&\overset{(a)}{\le} \frac{3\sigma^2d}{2\mu^2 m} + \frac{3M^2d\mu^2}{4}, \nonumber
\end{align}
\normalsize
where (a) follows from Lemma \ref{lem:g_bound_FD}.
\end{proof}

\begin{lemma}
\label{lem:g_variance_FD_H_smooth}
Suppose that Assumptions \ref{asm:F_sample_var_bound} and \ref{asp:F_hessian_lipschitz} hold.
Let $\bx \in \R^d$, $\Xi_1^{(j)}:=\{\bmxi^{1,i,(j)}\}_{i=1}^{d}$, and $\Xi_2^{(j)}:=\{\bmxi^{2,i,(j)}\}_{i=1}^{d}$, where $\bmxi^{1,i,(j)} \sim D(\bx+\mu \bme_i)$ and $\bmxi^{2,i,(j)} \sim D(\bx-\mu \bme_i)$ for $i \in [d]$ and $j \in [m]$.
Then, the following holds.
\begin{align}
&\E_{ \{\Xi_1^{(j)}\}_{j=1}^{m},\{\Xi_2^{(j)}\}_{j=1}^{m}} \left[ \left\| \bg^{\rm{co}}_m(\bx, \{\Xi_{1}^{(j)}\}_{j=1}^m, \{\Xi_{2}^{(j)}\}_{j=1}^m)  - \nabla F(\bx) \right\|^2  \right] \le  
\frac{3\sigma^2d}{2\mu^2m} + \frac{H^2\mu^4d}{12}.
\end{align}
\normalsize
\end{lemma}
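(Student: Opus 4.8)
The plan is to reuse the proof of Lemma~\ref{lem:g_variance_FD} almost verbatim, changing only the final bound on the deterministic finite-difference error. The key observation is that the decomposition leading to inequality~\eqref{prog:co_var_ineq} does not use Assumption~\ref{asm:F_smooth} at all: splitting the mini-batch estimator into one deterministic central-difference term and two zero-mean stochastic terms relies only on linearity, the inequality $\|\sum_i \ba_i\|^2 \le n\sum_i\|\ba_i\|^2$ (Lemma~\ref{lem:1_to_n_two_norm_bound}), and the mini-batch variance bound obtained from Assumption~\ref{asm:F_sample_var_bound} (Lemma~\ref{lem:minibatch_var_reduction}). Hence, under Assumptions~\ref{asm:F_sample_var_bound} and~\ref{asp:F_hessian_lipschitz} I would inherit exactly the same intermediate bound as in~\eqref{prog:co_var_ineq}, namely
\[
\E\!\left[\left\|\bg^{\rm{co}}_m - \nabla F(\bx)\right\|^2\right]
\le 3\left\|\sum_{i=1}^{d}\frac{F(\bx+\mu\bme_i)-F(\bx-\mu\bme_i)}{2\mu}\bme_i - \nabla F(\bx)\right\|^2 + \frac{3\sigma^2 d}{2\mu^2 m},
\]
so the stochastic term is identical and only the deterministic bias term needs a sharper estimate.

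The remaining task is therefore to bound the bias term under the Hessian-Lipschitz condition rather than under $M$-smoothness, which is where the improvement from $O(\mu^2)$ to $O(\mu^4)$ originates. Since the $\bme_i$ are orthonormal, the squared norm equals $\sum_{i=1}^d\big(\tfrac{F(\bx+\mu\bme_i)-F(\bx-\mu\bme_i)}{2\mu}-\partial_i F(\bx)\big)^2$, so I would control each coordinate separately. Fixing $i$, I would Taylor-expand $F(\bx\pm\mu\bme_i)$ to second order around $\bx$ and invoke Assumption~\ref{asp:F_hessian_lipschitz} (in its two-sided form, valid because $\nabla^2 F$ is $H$-Lipschitz) to bound each cubic remainder by $\tfrac{H}{6}\mu^3$. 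The central-difference structure is essential here: subtracting the two expansions cancels the constant terms and, crucially, the identical quadratic terms $\tfrac{\mu^2}{2}\,\bme_i^\top\nabla^2 F(\bx)\bme_i$, leaving $2\mu\,\partial_i F(\bx)$ plus the difference of the two remainders. Dividing by $2\mu$ then gives the per-coordinate estimate $\big|\tfrac{F(\bx+\mu\bme_i)-F(\bx-\mu\bme_i)}{2\mu}-\partial_i F(\bx)\big|\le \tfrac{H\mu^2}{6}$.

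Summing the squares over the $d$ coordinates yields a bias bound of $d\,(\tfrac{H\mu^2}{6})^2=\tfrac{H^2\mu^4 d}{36}$, and multiplying by the factor $3$ produces the contribution $\tfrac{H^2\mu^4 d}{12}$; combined with the unchanged $\tfrac{3\sigma^2 d}{2\mu^2 m}$, this gives the claimed bound. The main (and essentially only) conceptual obstacle is the cancellation of the quadratic term in the central difference: it is precisely this cancellation that upgrades the finite-difference accuracy relative to Lemma~\ref{lem:g_variance_FD}, and it is why the two-sided Hessian-Lipschitz estimate—rather than the one-sided inequality displayed in Assumption~\ref{asp:F_hessian_lipschitz}—is the tool I need. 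Everything else is a routine transcription of the earlier proof, and one could alternatively package the per-coordinate computation as a Hessian-Lipschitz analogue of Lemma~\ref{lem:g_bound_FD}.
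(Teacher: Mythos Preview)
Your proposal is correct and follows essentially the same approach as the paper: reuse the decomposition leading to \eqref{prog:co_var_ineq} unchanged, then sharpen the deterministic bias term coordinate-wise by exploiting the cancellation of the quadratic Taylor terms under Assumption~\ref{asp:F_hessian_lipschitz} to obtain the per-coordinate bound $\tfrac{H\mu^2}{6}$. Your remark that the two-sided form of the Hessian-Lipschitz estimate (rather than the one-sided inequality displayed in the assumption) is what is actually needed is a valid observation, and the paper uses it in exactly this way.
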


\begin{proof}
For all $i \in [d]$, we have
\begin{align}
&\left|\frac{F(\bx+\mu \bme_i)-F(\bx-\mu \bme_i)}{2\mu}  - \nabla F(\bx)^\top \bme_i\right| \nonumber\\
&=\left|\frac{F(\bx+\mu \bme_i)-F(\bx) - \mu \nabla F(\bx)^\top \bme_i}{2\mu}  - \frac{F(\bx-\mu \bme_i)-F(\bx) + \mu \nabla F(\bx)^\top \bme_i}{2\mu}\right| \nonumber\\
&= \Bigg|\frac{F(\bx+\mu \bme_i)-F(\bx) - \mu \nabla F(\bx)^\top \bme_i -\frac{\mu^2}{2} \bme_i^\top \nabla^2 F(\bx) \bme_i}{2\mu} \\
&\quad \  - \frac{F(\bx-\mu \bme_i)-F(\bx) + \mu \nabla F(\bx)^\top \bme_i-\frac{\mu^2}{2} \bme_i^\top \nabla^2 F(\bx) \bme_i}{2\mu}\Bigg| \nonumber\\
& \le  \left|\frac{F(\bx+\mu \bme_i)-F(\bx) - \mu \nabla F(\bx)^\top \bme_i -\frac{\mu^2}{2} \bme_i^\top \nabla^2 F(\bx) \bme_i}{2\mu} \right| \\
&\quad + \left| \frac{F(\bx-\mu \bme_i)-F(\bx) + \mu \nabla F(\bx)^\top \bme_i-\frac{\mu^2}{2} \bme_i^\top \nabla^2 F(\bx) \bme_i}{2\mu}\right| \nonumber\\
& \overset{(a)}{\le} \frac{H\|\mu \bme_i\|^3}{12\mu} + \frac{H\|\mu \bme_i\|^3}{12\mu} =\frac{H\mu^2}{6},\label{eq:hesse_lipschitz_continuous_first}
\end{align}
\normalsize
where (a) comes from Assumption \ref{asp:F_hessian_lipschitz}.
As in the derivation of \eqref{prog:co_var_ineq} in the proof of Lemma \ref{lem:g_variance_FD},
\begin{align*}
&   \E_{ \{\Xi_1^{(j)}\}_{j=1}^{m},\{\Xi_2^{(j)}\}_{j=1}^{m}} \left[ \left\| \bg^{\rm{co}}_m(\bx, \{\Xi_{1}^{(j)}\}_{j=1}^m, \{\Xi_{2}^{(j)}\}_{j=1}^m)   - \nabla F(\bx) \right\|^2  \right] \\
& \le \frac{3\sigma^2d}{2\mu^2 m} + 3\left\| \sum_{i=1}^{d}\frac{F(\bx+\mu \bme_i)-F(\bx-\mu \bme_i)}{2\mu}\bme_i  - \nabla F(\bx) \right\|^2 \\
&=\frac{3\sigma^2d}{2\mu^2 m} + 3\sum_{i=1}^{d} \left( \frac{F(\bx+\mu \bme_i)-F(\bx-\mu \bme_i)}{2\mu}  - \nabla F(\bx)^\top \bme_i \right)^2.
\end{align*}
Therefore, from \eqref{eq:hesse_lipschitz_continuous_first},
\begin{align*}
&   \E_{ \{\Xi_1^{(j)}\}_{j=1}^{m},\{\Xi_2^{(j)}\}_{j=1}^{m}} \left[ \left\| \bg^{\rm{co}}_m(\bx, \{\Xi_{1}^{(j)}\}_{j=1}^m, \{\Xi_{2}^{(j)}\}_{j=1}^m)   - \nabla F(\bx) \right\|^2  \right] \le \frac{3\sigma^2d}{2\mu^2m}+ \frac{H^2\mu^4d}{12}.
\end{align*}
\end{proof}

\paragraph{Intuitive interpretation of Lemmas~\ref{lem:g_variance_FD} and~\ref{lem:g_variance_FD_H_smooth}.}
The first term in each upper bound is the error incurred when approximating $F(\bx)$ by a finite-sample average of $f(\bx,\bmxi)$.
As discussed in Section~\ref{subsec:basic_form} (\textbf{Difference from gradient estimators for problem~(2)}), this term is amplified as the smoothing parameter $\mu$ decreases.
The second term reflects the error introduced by approximating the gradient of $F$ via finite differences.
This error diminishes as $\mu$ decreases. 
Therefore, there is a trade-off in $\mu$, which does not arise in stochastic zeroth-order methods \citep{ghadimi2013stochastic,iwakiri2022single} with decision-independent distributions.

By setting $\mu$ appropriately, we show the sample complexity for Algorithm \ref{alg:simple} with $\bg^{\rm{co}}_m$.
\begin{theorem} \label{thm:sample_complexity_co_grad_lip}
Suppose that Assumptions \ref{asm:F_sample_var_bound} and \ref{asm:F_smooth} hold.
Let 
$\bg_t:=\bg^{\rm{co}}_m(\bx_t, \{\Xi_1^{(j)}\}_{j=1}^m, \{\Xi_2^{(j)}\}_{j=1}^m)$, where $\bg^{\rm{co}}_m$ is defined by 
\eqref{eq:mini_batch_gradient_est}.
Let $\mu:= \sqrt[4]{\frac{2\sigma^2}{mM^2}}$, $m:=\Theta (d^2 \epsilon^{-4})$, $\eta\le \frac{1}{4M}$, and $T:=\Theta(\epsilon^{-2})$.
Then, the sample complexity, to satisfy $\E[\|\nabla F(\bar{\bx})\|^2]\le \epsilon^2$ for output $\bar{\bx}$ of Algorithm \ref{alg:simple}, is $O(d^3\epsilon^{-6})$.
\end{theorem}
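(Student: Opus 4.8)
The plan is to chain together the descent guarantee of Lemma~\ref{lem:simple_descent} with the per-step gradient-error bound of Lemma~\ref{lem:g_variance_FD}, optimize the smoothing parameter, and then translate iterations and batch size into a total sample count.

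First I would invoke Lemma~\ref{lem:simple_descent}, which under Assumption~\ref{asm:F_smooth} and $\eta\le\frac{1}{4M}$ gives
\[
\E[\|\nabla F(\bar{\bx})\|^2] \le \frac{4(F(\bx_0)-F^*)}{\eta(T+1)} + \frac{3}{T+1}\sum_{t=0}^T \E_{\bg_{[t]}}[\|\nabla F(\bx_t)-\bg_t\|^2].
\]
Because $\bg_t=\bg^{\rm{co}}_m(\bx_t,\cdot)$ uses a fixed $\mu$, Lemma~\ref{lem:g_variance_FD} bounds every summand uniformly (the bound does not depend on $\bx_t$) by $\frac{3\sigma^2 d}{2\mu^2 m}+\frac{3M^2 d\mu^2}{4}$, so the average over $t$ equals that same quantity and
\[
\E[\|\nabla F(\bar{\bx})\|^2] \le \frac{4(F(\bx_0)-F^*)}{\eta(T+1)} + 3\left(\frac{3\sigma^2 d}{2\mu^2 m}+\frac{3M^2 d\mu^2}{4}\right).
\]

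Next I would optimize over $\mu$. The bracketed expression has the form $A/\mu^2+B\mu^2$ with $A=\frac{3\sigma^2 d}{2m}$ and $B=\frac{3M^2 d}{4}$, minimized at $\mu^4=A/B=\frac{2\sigma^2}{mM^2}$, which is precisely the prescribed value of $\mu$; substituting gives $2\sqrt{AB}=\Theta\!\big(\sigma M d/\sqrt{m}\big)$ for the second term. Choosing $m=\Theta(d^2\epsilon^{-4})$ then pushes this term below $\epsilon^2/2$, while choosing $T=\Theta(\epsilon^{-2})$ with $\eta=\Theta(1/M)$ constant pushes the first term below $\epsilon^2/2$, so $\E[\|\nabla F(\bar{\bx})\|^2]\le\epsilon^2$.

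Finally I would count samples. Each evaluation of $\bg^{\rm{co}}_m$ queries $f$ at $2d$ perturbed points per mini-batch element, hence consumes $\Theta(dm)$ samples per iteration; over $T+1$ iterations the total is $\Theta(dm)\cdot\Theta(T)=\Theta\!\big(d\cdot d^2\epsilon^{-4}\cdot\epsilon^{-2}\big)=O(d^3\epsilon^{-6})$, as claimed. The main subtlety — and the only real obstacle — is the $\mu$-trade-off highlighted after Lemma~\ref{lem:g_variance_FD}: unlike decision-independent zeroth-order methods, $\mu$ cannot be sent to zero because the sampling-error term $\frac{3\sigma^2 d}{2\mu^2 m}$ then blows up, so accuracy must instead be bought through the large batch $m$. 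Verifying that the balanced choice of $\mu$ together with $m=\Theta(d^2\epsilon^{-4})$ and $T=\Theta(\epsilon^{-2})$ simultaneously closes both terms, and that this is exactly what forces the $d^3$ dimension dependence, is the essential bookkeeping step.
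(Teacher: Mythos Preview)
Your proposal is correct and mirrors the paper's proof essentially step for step: apply Lemma~\ref{lem:simple_descent}, bound each summand uniformly via Lemma~\ref{lem:g_variance_FD}, plug in the balanced $\mu$ (your derivation that $\mu^4=2\sigma^2/(mM^2)$ is the minimizer simply justifies the paper's prescribed choice), and then count samples as $O(dmT)$. The only cosmetic difference is that the paper substitutes the given $\mu$ directly to obtain the explicit constant $\frac{9\sqrt{2}\sigma M d}{2\sqrt m}$ rather than writing $2\sqrt{AB}$, but the logic is identical.
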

\begin{proof}
We have
\begin{align*}
\E[\|\nabla F(\bar{\bx})\|^2] 
& \overset{(a)}{\le} \frac{4(F(\bx_0) - F^*)}{\eta (T+1)}  + \frac{3}{T+1} \sum_{t=0}^T\E_{\bg_{[t]}}[\|\nabla F(\bx_t)-\bg_t\|^2] \\
& \overset{(b)}{\le} \frac{4(F(\bx_0) - F^*)}{\eta (T+1)}  + \frac{9\sqrt{2} \sigma M d}{2\sqrt{m}}\\
&\overset{(c)}{=} O(\epsilon^2),
\end{align*}
where (a) comes from Lemma \ref{lem:simple_descent}; 
(b) follows from Lemma \ref{lem:g_variance_FD} and $\mu:= \sqrt[4]{\frac{2\sigma^2}{mM^2}}$; 
(c) is due to the facts that $T=\Theta(\epsilon^{-2})$ and $m=\Theta (d^2 \epsilon^{-4})$.
Since $d$ samples are required to calculate \eqref{eq:one_batch_gradient_est}, the sample complexity is $O(dmT)=O(d^3\epsilon^{-6})$.
\end{proof}

\begin{theorem} \label{thm:sample_complexity_co_Hess_lip}
Suppose that Assumptions \ref{asm:F_sample_var_bound}--\ref{asp:F_hessian_lipschitz} hold.
Let 
$\bg_t:=\bg^{\rm{co}}_m(\bx_t, \{\Xi_1^{(j)}\}_{j=1}^m, \{\Xi_2^{(j)}\}_{j=1}^m)$, where $\bg^{\rm{co}}_m$ is defined by 
\eqref{eq:mini_batch_gradient_est}.
Let $\mu:= \sqrt[6]{\frac{18\sigma^2}{mH^2}}$, $m:=\Theta (d^{\frac{3}{2}} \epsilon^{-3})$, $\eta\le \frac{1}{4M}$, and $T:=\Theta(\epsilon^{-2})$.
Then, the sample complexity, to satisfy $\E[\|\nabla F(\bar{\bx})\|^2]\le \epsilon^2$ for output $\bar{\bx}$ of Algorithm \ref{alg:simple}, is $O(d^{\frac{5}{2}}\epsilon^{-5})$.
\end{theorem}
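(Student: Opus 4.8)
The plan is to mirror the argument of \cref{thm:sample_complexity_co_grad_lip} verbatim in structure, replacing the $M$-smoothness bound of \cref{lem:g_variance_FD} by the sharper Hessian-Lipschitz bound of \cref{lem:g_variance_FD_H_smooth}. First I would invoke \cref{lem:simple_descent} (applicable since $\eta\le\frac{1}{4M}$), which reduces the task of controlling $\E[\|\nabla F(\bar{\bx})\|^2]$ to bounding two pieces: an optimization error $\frac{4(F(\bx_0)-F^*)}{\eta(T+1)}$ and the averaged estimator error $\frac{3}{T+1}\sum_{t=0}^T\E_{\bg_{[t]}}[\|\nabla F(\bx_t)-\bg_t\|^2]$. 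With $\eta$ a fixed constant and $T=\Theta(\epsilon^{-2})$, the first piece is immediately $O(\epsilon^2)$.

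Next I would apply \cref{lem:g_variance_FD_H_smooth} uniformly over $t$, bounding each estimator error by $\frac{3\sigma^2 d}{2\mu^2 m}+\frac{H^2\mu^4 d}{12}$. The crucial step is the trade-off in $\mu$: the sampling-noise term grows like $\mu^{-2}$ while the finite-difference bias term now grows like $\mu^4$. Setting the two terms equal yields exactly $\mu^6=\frac{18\sigma^2}{mH^2}$, i.e.\ the prescribed $\mu=\sqrt[6]{18\sigma^2/(mH^2)}$. Substituting back collapses the bound to $\frac{3\sigma^2 d}{\mu^2 m}$, and since $\mu^{-2}=(mH^2/18\sigma^2)^{1/3}$ this equals $O(d\,m^{-2/3})$ when $\sigma,H$ are treated as constants.

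Feeding this into the second piece of \cref{lem:simple_descent} gives $O(d\,m^{-2/3})$, and forcing it to be $O(\epsilon^2)$ demands $m^{2/3}=\Omega(d\epsilon^{-2})$, hence $m=\Theta(d^{3/2}\epsilon^{-3})$, as in the statement; combining, $\E[\|\nabla F(\bar{\bx})\|^2]=O(\epsilon^2)$. Finally I would count samples: one evaluation of the single-direction estimator \eqref{eq:one_batch_gradient_est} consumes $O(d)$ samples, the mini-batch \eqref{eq:mini_batch_gradient_est} consumes $O(dm)$, and over $T$ iterations the total is $O(dmT)=O(d\cdot d^{3/2}\epsilon^{-3}\cdot\epsilon^{-2})=O(d^{5/2}\epsilon^{-5})$.

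The computation is essentially routine once \cref{lem:g_variance_FD_H_smooth} is in hand, so the heavy lifting lives in that lemma rather than here; the only genuinely substantive point — and the entire source of the improvement over \cref{thm:sample_complexity_co_grad_lip} — is that the Hessian-Lipschitz assumption upgrades the bias term from $O(\mu^2)$ to $O(\mu^4)$, which permits a larger balancing $\mu$ and therefore a smaller batch $m$ ($d^{3/2}\epsilon^{-3}$ instead of $d^2\epsilon^{-4}$), driving the complexity down from $O(d^3\epsilon^{-6})$ to $O(d^{5/2}\epsilon^{-5})$. The main thing I would double-check is that the chosen $\mu$ depends on $m$ but not on the iterate $\bx_t$, so the per-iteration bound holds uniformly in $t$ and the averaging introduces no extra dependence.
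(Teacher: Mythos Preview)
Your proposal is correct and follows essentially the same route as the paper: invoke \cref{lem:simple_descent}, plug in the bound from \cref{lem:g_variance_FD_H_smooth}, balance the $\mu^{-2}$ and $\mu^4$ terms to obtain $\mu=\sqrt[6]{18\sigma^2/(mH^2)}$, then choose $m=\Theta(d^{3/2}\epsilon^{-3})$ and $T=\Theta(\epsilon^{-2})$ and count $O(dmT)$ samples. Your added commentary on the $O(\mu^2)\to O(\mu^4)$ bias upgrade and the resulting smaller batch is accurate and matches the paper's intent.
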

\begin{proof}
We have
\begin{align*}
\E[\|\nabla F(\bar{\bx})\|^2] 
& \overset{(a)}{\le} \frac{4(F(\bx_0) - F^*)}{\eta (T+1)}  + \frac{3}{T+1} \sum_{t=0}^T\E_{\bg_{[t]}}[\|\nabla F(\bx_t)-\bg_t\|^2] \\
& \overset{(b)}{\le} \frac{4(F(\bx_0) - F^*)}{\eta (T+1)}  + 3\left(\frac{3d^{3} H^{2} \sigma^{4}}{2 m^{2}}\right)^{1/3}\\
&\overset{(c)}{=} O(\epsilon^2),
\end{align*}
where (a) comes from Lemma \ref{lem:simple_descent}; 
(b) follows from Lemma \ref{lem:g_variance_FD_H_smooth} and $\mu:= \sqrt[6]{\frac{18\sigma^2}{mH^2}}$;
(c) is due to the facts that $T=\Theta(\epsilon^{-2})$ and $m=\Theta (d^{\frac{3}{2}} \epsilon^{-3})$.
Since $d$ samples are required to calculate \eqref{eq:one_batch_gradient_est}, the sample complexity is $O(dmT)=O(d^{\frac{5}{2}}\epsilon^{-5})$.
\end{proof}

\subsubsection{Gradient Estimator via Smoothing on a Sphere}
The gradient estimator with directions drawn uniformly from the unit sphere can be written as follows:
\begin{align*}
\bg^{\rm{sp}}(\bx, S, \Xi_1, \Xi_2):= &\frac{d}{N} \sum_{i=1}^{N} \frac{f(\bx+\mu \bs^i,\bmxi^{1,i})-f(\bx - \mu \bs^i,\bmxi^{2,i})}{2\mu} \bs^i.
\end{align*}
Here, $\mu \in \R_{>0}$; 
$S:=\{\bs^i\}_{i=1}^N$, where $\bs^i$ follows the uniform distribution $\maS$ over the unit sphere; 
$\Xi_1:=\{\bmxi^{1,i}\}_{i=1}^N$, where $\bmxi^{1,i} \sim D(\bx+\mu \bs^i)$;
$\Xi_2:=\{\bmxi^{2,i}\}_{i=1}^N$, where $\bmxi^{2,i} \sim D(\bx-\mu \bs^i)$.
We consider the mini-batch version of the gradient estimator:
\begin{align}
\bg^{\rm{sp}}_m(\bx, S,\{\Xi_1^{(j)}\}_{j=1}^m, \{\Xi_2^{(j)}\}_{j=1}^m): =\frac{1}{m}\sum_{j=1}^{m} \bg^{\rm{sp}}(\bx,S,\Xi_1^{(j)},\Xi_2^{(j)}). \label{eq:mini_batch_gradient_est_sp}
\end{align}

Here, let
\begin{align}
&F_{\mu,\maB}(\bx):= \E_{\bs \sim \maB}[F(\bx + \mu \bs)], \label{def:F_mu_S}
\end{align}
where $\maB$ denotes the multivariate uniform distribution on a ball of radius $1$ centered at $\bm{0}$.
Then, the following lemma holds for the gradient estimator from \cite[Lemma 2.1]{flaxman2004online}.
\begin{lemma} \label{lem:estimator_sphere_unbiased}
Let $\bg^{\rm{sp}}_m(\bx, S,\{\Xi_1^{(j)}\}_{j=1}^m, \{\Xi_2^{(j)}\}_{j=1}^m)$ be defined by \eqref{eq:mini_batch_gradient_est_sp} and 
$F_{\mu,\maB}(\bx)$ be defined by \eqref{def:F_mu_S}.
Then, 
\begin{align*}
\E_{S, \{\Xi_{1}^{(j)}\}_{j=1}^m, \{\Xi_{2}^{(j)}\}_{j=1}^m}\left[\bg^{\rm{sp}}_m(\bx, S,\{\Xi_1^{(j)}\}_{j=1}^m, \{\Xi_2^{(j)}\}_{j=1}^m)\right]
&= \E_{\bs \sim \maS}\left[\frac{d(F(\bx+\mu \bs)-F(\bx - \mu \bs))}{2\mu}\bs\right] \\
&=\nabla F_{\mu,\maB}(\bx),
\end{align*}
\end{lemma}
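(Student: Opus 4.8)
The plan is to peel off the two independent sources of randomness in turn via the law of total expectation, and then to invoke the cited Flaxman identity together with a symmetry argument. Writing $\E_{S,\Xi}$ for the joint expectation over the directions $S=\{\bs^i\}_{i=1}^N$ and the samples $\{\Xi_1^{(j)}\}_{j=1}^m,\{\Xi_2^{(j)}\}_{j=1}^m$, the first move is to condition on the directions,
\[
\E_{S,\Xi}\!\left[\bg^{\rm{sp}}_m\right]=\E_S\!\left[\,\E_\Xi\!\left[\bg^{\rm{sp}}_m \mid S\right]\right],
\]
so that the sample noise and the direction randomness can be handled separately.

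First I would evaluate the inner expectation. For fixed directions $S$, each forward sample satisfies $\bmxi^{1,i,(j)}\sim D(\bx+\mu \bs^i)$, so by the defining identity $F(\by)=\E_{\bmxi\sim D(\by)}[f(\by,\bmxi)]$ we have $\E[f(\bx+\mu\bs^i,\bmxi^{1,i,(j)})\mid S]=F(\bx+\mu\bs^i)$, and likewise $\E[f(\bx-\mu\bs^i,\bmxi^{2,i,(j)})\mid S]=F(\bx-\mu\bs^i)$. By linearity the mini-batch average over $j\in[m]$ collapses, since it averages $m$ copies of the same conditional mean, giving
\[
\E_\Xi\!\left[\bg^{\rm{sp}}_m \mid S\right]=\frac{d}{N}\sum_{i=1}^N\frac{F(\bx+\mu\bs^i)-F(\bx-\mu\bs^i)}{2\mu}\bs^i.
\]
Taking the outer expectation over $S$, the $\bs^i$ are i.i.d.\ uniform on the unit sphere $\maS$, so every summand has the same expectation and the average over $i\in[N]$ collapses in the same way, yielding the first claimed equality
\[
\E_{S,\Xi}\!\left[\bg^{\rm{sp}}_m\right]=\E_{\bs\sim\maS}\!\left[\frac{d\bigl(F(\bx+\mu\bs)-F(\bx-\mu\bs)\bigr)}{2\mu}\bs\right].
\]

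For the second equality I would invoke \cite[Lemma 2.1]{flaxman2004online}, which states $\nabla F_{\mu,\maB}(\bx)=\frac{d}{\mu}\E_{\bs\sim\maS}[F(\bx+\mu\bs)\bs]$, relating the gradient of the ball-smoothed function $F_{\mu,\maB}$ to a surface expectation over the sphere. Because $\maS$ is invariant under $\bs\mapsto-\bs$, the change of variable gives $\E_{\bs\sim\maS}[F(\bx+\mu\bs)\bs]=-\E_{\bs\sim\maS}[F(\bx-\mu\bs)\bs]$, and averaging the two representations symmetrizes the one-point Flaxman form into the antithetic (central-difference) form, matching the expression above.

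The computation itself is routine; the one point that requires care is the order of conditioning. Since the sampling distribution $D(\bx\pm\mu\bs^i)$ depends on the perturbed evaluation point, and hence on the direction $\bs^i$, one cannot average out the sample noise unconditionally: the identity $\E[f(\by,\bmxi)]=F(\by)$ is only available once the evaluation point $\by=\bx\pm\mu\bs^i$ is fixed. Conditioning on $S$ first, as above, is exactly what legitimizes this step, after which the remaining reductions are merely linearity of expectation and the i.i.d.\ structure of the directions and samples.
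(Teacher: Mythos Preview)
Your proposal is correct and follows essentially the same approach as the paper: the paper's proof treats the first equality as immediate and only writes out the second, splitting the antithetic expectation via the sign-flip symmetry $\bs\mapsto-\bs$ and then applying \cite[Lemma~2.1]{flaxman2004online}. Your version is in fact more complete, since you spell out the conditioning argument that justifies the first equality (which the paper omits) and make explicit why the order of conditioning matters given that the sampling distribution depends on the perturbed point.
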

\begin{proof}
\begin{align*}
\E_{\bs \sim \maS}\left[\frac{d(F(\bx+\mu \bs)-F(\bx - \mu \bs))}{2\mu}\bs\right] =\E_{\bs \sim \maS}\left[\frac{dF(\bx+\mu \bs)}{2\mu}\right] +\E_{\bs \sim \maS}\left[\frac{dF(\bx + \mu \bs)}{2\mu}\bs\right] \overset{(a)}{=}\nabla F_{\mu,\maB}(\bx),
\end{align*}
where (a) comes from \cite[Lemma 2.1]{flaxman2004online}.
\end{proof}

From the preceding lemma, $\bg^{\mathrm{sp}}_m$ serves as an estimator of the gradient of the objective function averaged over a ball.
Then, we can bound the distance between the true gradient and the gradient estimator by the following lemmas.
In the proof, we let
\begin{align*}
    &\bg_F^{{\rm sp}}(\bx, S):= \frac{d}{N} \sum_{i=1}^N  \frac{F(\bx+\mu \bs^i)-F(\bx - \mu \bs^i)}{2\mu}\bs^i.
\end{align*}

\begin{lemma}
\label{lem:g_variance_sp_batch1}
Suppose that Assumptions \ref{asm:F_sample_var_bound} and \ref{asm:F_smooth} hold.
Then, the following holds.
\begin{align}
&\E_{S,\{\Xi_{1}^{(j)}\}_{j=1}^m, \{\Xi_{2}^{(j)}\}_{j=1}^m} \left[ \left\| \bg^{\rm{sp}}_m(\bx,S, \{\Xi_{1}^{(j)}\}_{j=1}^m, \{\Xi_{2}^{(j)}\}_{j=1}^m)  - \nabla F(\bx) \right\|^2  \right] \nonumber \\
&\le\frac{3\sigma^2d^2}{\mu^2 Nm} + 3M^2 \mu^2+ \frac{3M^2\mu^2d^2}{2N} + \frac{18d^2}{N(d+2)} \|\nabla F(\bx)\|^2.
\label{eq:sp_vound_grad_smooth}
\end{align}
\end{lemma}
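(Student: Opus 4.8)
The plan is to mirror the proof of Lemma~\ref{lem:g_variance_FD}, splitting the error into a sampling-noise part, a direction-averaging part, and a smoothing-bias part, and bounding each in turn. I would introduce the noiseless surrogate $\bg_F^{\rm sp}(\bx,S)$ defined just before the statement and the ball-smoothed mean $\nabla F_{\mu,\maB}(\bx)$ from Lemma~\ref{lem:estimator_sphere_unbiased}, and write
\[
\bg^{\rm sp}_m - \nabla F(\bx) = \big(\bg^{\rm sp}_m - \bg_F^{\rm sp}\big) + \big(\bg_F^{\rm sp} - \nabla F_{\mu,\maB}(\bx)\big) + \big(\nabla F_{\mu,\maB}(\bx) - \nabla F(\bx)\big).
\]
Applying the three-term inequality (Lemma~\ref{lem:1_to_n_two_norm_bound}) and taking expectations gives three terms, each multiplied by $3$; the middle one will further split into a gradient-norm piece and a curvature piece, accounting for the four summands in the claim.

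For the sampling-noise term $\E[\|\bg^{\rm sp}_m - \bg_F^{\rm sp}\|^2]$, I would condition on the directions $S$. Conditionally this is a sum over $i\in[N]$ and $j\in[m]$ of independent, mean-zero vectors whose scalar magnitudes are the forward and backward deviations $f(\bx\pm\mu\bs^i,\cdot)-F(\bx\pm\mu\bs^i)$ scaled by $\tfrac{d}{2\mu Nm}$ and carried along the unit vector $\bs^i$. Splitting the forward and backward contributions with $\|\ba-\bb\|^2\le 2\|\ba\|^2+2\|\bb\|^2$, using $\|\bs^i\|=1$, and bounding each conditional variance by $\sigma^2$ via Assumption~\ref{asm:F_sample_var_bound}, the $Nm$ independent terms sum to $O(\sigma^2 d^2/(\mu^2 Nm))$; taking expectation over $S$ and the factor $3$ yields the first summand $\tfrac{3\sigma^2 d^2}{\mu^2 Nm}$.

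The core of the argument is the direction-averaging term. Writing $\bg_F^{\rm sp}=\tfrac1N\sum_i \bm{Z}_i$ with $\bm{Z}_i := d\,\tfrac{F(\bx+\mu\bs^i)-F(\bx-\mu\bs^i)}{2\mu}\,\bs^i$ i.i.d.\ and $\E[\bm{Z}_i]=\nabla F_{\mu,\maB}$ by Lemma~\ref{lem:estimator_sphere_unbiased}, the variance of the average contracts by $1/N$, so it suffices to bound $\E[\|\bm{Z}_1\|^2]$. Here I would use $M$-smoothness to write $\tfrac{F(\bx+\mu\bs)-F(\bx-\mu\bs)}{2\mu}=\nabla F(\bx)^\top\bs + r(\bs)$ with the two-sided central-difference remainder $|r(\bs)|\le \tfrac{M\mu}{2}$, split via $\|\ba+\bb\|^2\le 2\|\ba\|^2+2\|\bb\|^2$, and then evaluate the directional second moment on the sphere, $\E_{\bs\sim\maS}[(\nabla F(\bx)^\top\bs)^2]=\|\nabla F(\bx)\|^2/d\le 3\|\nabla F(\bx)\|^2/(d+2)$, together with $\|\bs\|=1$. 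Multiplying the resulting bound on $d^2\E[\|\bm{Z}_1\|^2]/N$ by $3$ produces both $\tfrac{18d^2}{N(d+2)}\|\nabla F(\bx)\|^2$ and $\tfrac{3M^2\mu^2 d^2}{2N}$. For the deterministic smoothing-bias term I would use $\nabla F_{\mu,\maB}(\bx)=\E_{\bs\sim\maB}[\nabla F(\bx+\mu\bs)]$ and $M$-Lipschitzness of $\nabla F$ to get $\|\nabla F_{\mu,\maB}-\nabla F\|\le M\mu$, hence the $3M^2\mu^2$ summand.

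I expect the delicate step to be the second-moment bound for the single-direction estimator $\bm{Z}_1$: one must cleanly separate the gradient-aligned contribution, which scales like $\|\nabla F(\bx)\|^2$ and carries the $d^2/(d+2)$ dimension factor, from the finite-difference remainder, which scales like $M^2\mu^2 d^2$, and propagate the sphere moment identities so that the dimension dependence and constants land exactly as stated. By contrast, the sampling-noise and smoothing-bias terms are routine once Assumption~\ref{asm:F_sample_var_bound} and the Lipschitzness of $\nabla F$ are in hand.
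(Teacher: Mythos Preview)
Your proposal is correct and follows the same three-way decomposition as the paper: sampling noise $\bg^{\rm sp}_m-\bg_F^{\rm sp}$, direction-averaging variance $\bg_F^{\rm sp}-\nabla F_{\mu,\maB}$, and smoothing bias $\nabla F_{\mu,\maB}-\nabla F$, each bounded exactly as you describe and combined via Lemma~\ref{lem:1_to_n_two_norm_bound}. The sampling-noise and bias pieces match the paper's treatment essentially verbatim (the paper invokes Lemma~\ref{lem:sp_smoothed_error} for the bias rather than re-deriving it, but the content is identical).

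The one noteworthy difference is in the direction-averaging term. The paper works at the level of the covariance matrix ${\rm Var}(\bg_F^{\rm sp})$, applies the matrix moment identity $\E_{\bs\sim\maS}[(\ba^\top\bs)^2\bs\bs^\top]=\tfrac{\ba^\top\ba I+2\ba\ba^\top}{d(d+2)}$ from Lemma~\ref{lem:k_moment_ss_sp_noise}, bounds $\nabla F\nabla F^\top\preceq\|\nabla F\|^2 I$, and only then takes the trace. Your route is more elementary: since $\|\bs\|=1$, you reduce $\E[\|\bm Z_1\|^2]$ to the scalar second moment $\E_{\bs\sim\maS}[(\nabla F^\top\bs)^2]=\|\nabla F\|^2/d$ directly, bypassing the matrix identity. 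This actually yields the slightly sharper coefficient $6d/N$ in front of $\|\nabla F\|^2$, which you then relax via $1/d\le 3/(d+2)$ to recover the stated $18d^2/(N(d+2))$. Both approaches are valid; yours avoids one auxiliary lemma at the cost of an artificial weakening at the end, while the paper's matrix route is what naturally produces the $d^2/(d+2)$ factor appearing in the statement.
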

\begin{proof}
We have 
\begin{align}
{\rm Var}(\bg_F^{{\rm sp}}(\bx, S)) 
&= \frac{1}{N} {\rm Var}\left( \frac{d(F(\bx+\mu \bs)-F(\bx - \mu \bs))}{2\mu}\bs\right)
\nonumber \\
&= \frac{1}{N} \E_{\bs \sim \maS}\left[\left(\frac{d(F(\bx+\mu \bs)-F(\bx - \mu \bs))}{2\mu}\bs\right) \left(\frac{d(F(\bx+\mu \bs)-F(\bx - \mu \bs))}{2\mu}\bs\right)^\top \right] 
\nonumber \\
& \quad - \frac{1}{N}\E_{\bs \sim \maS}\left[\frac{d(F(\bx+\mu \bs)-F(\bx - \mu \bs))}{2\mu}\bs\right] \E_{\bs \sim \maS}\left[\frac{d(F(\bx+\mu \bs)-F(\bx - \mu \bs))}{2\mu}\bs \right]^\top \nonumber \\
& \overset{(a)}{=} \frac{d^2}{N}\E_{\bs \sim \maS}\left[ \left( \frac{F(\bx+\mu \bs)-F(\bx - \mu \bs)}{2\mu}\right)^2  \bs \bs^\top \right]- \frac{1}{N} \nabla F_{\mu,\maB}(\bx) \nabla F_{\mu,\maB}(\bx)^\top, \nonumber
\end{align}
where (a) comes from Lemma \ref{lem:estimator_sphere_unbiased}.

Therefore, 
\begin{align}
{\rm Var}(\bg_F^{{\rm sp}}(\bx, S)) 
& \preceq  \frac{d^2}{N}\E_{\bs \sim \maS}\left[ \left( \frac{F(\bx+\mu \bs)-F(\bx - \mu \bs)}{2\mu} \right)^2 \bs \bs^\top  \right] \nonumber \\
& =\frac{d^2}{N}\E_{\bs \sim \maS}\left[ \left( \frac{F(\bx+\mu \bs)-F(\bx - \mu \bs) - 2\mu \nabla F(\bx)^\top \bs}{2\mu} + \nabla F(\bx)^\top \bs \right)^2 \bs \bs^\top  \right] \nonumber \\
& \preceq \frac{d^2}{N}\E_{\bs \sim \maS}\left[ 2\left( \frac{F(\bx+\mu \bs)-F(\bx - \mu \bs) - 2\mu \nabla F(\bx)^\top \bs}{2\mu} \right)^2 \bs \bs^\top + 2(\nabla F(\bx)^\top \bs)^2 \bs \bs^\top \right] \nonumber \\
& =\frac{2d^2}{N}\E_{\bs \sim \maS}\bigg[ \bigg( \frac{F(\bx+\mu \bs)-F(\bx) - \mu \nabla F(\bx)^\top \bs}{2\mu} \nonumber \\
    &\hspace{30mm} - \frac{F(\bx-\mu \bs)-F(\bx) + \mu \nabla F(\bx)^\top \bs}{2\mu} \bigg)^2 \bs \bs^\top  + (\nabla F(\bx)^\top \bs)^2 \bs \bs^\top \bigg] \label{eq:covar_ap} \\
&\overset{(a)}{\preceq}  \frac{2d^2}{N}\E_{\bs \sim \maS}\bigg[ \bigg( \frac{M\|\mu \bs\|^2}{4\mu} + \frac{M\|\mu \bs\|^2}{4\mu}\bigg)^2 \bs \bs^\top  + (\nabla F(\bx)^\top \bs)^2 \bs \bs^\top \bigg] \nonumber \\
& = \frac{2d^2}{N}\E_{\bs \sim \maS}\bigg[\frac{M^2\mu^2 \|\bs\|^4}{4}  \bs \bs^\top  + (\nabla F(\bx)^\top \bs)^2 \bs \bs^\top \bigg] \nonumber \\
& \overset{(b)}{=} \frac{2d^2}{N}\left(\frac{M^2\mu^2}{4d} I + \frac{\nabla F(\bx)^\top \nabla F(\bx)}{d(d+2)} I  + \frac{2}{d(d+2)} \nabla F(\bx) \nabla F(\bx)^\top \right) \nonumber \\
& \overset{(c)}{\preceq} \frac{2}{N}\left(\frac{M^2\mu^2d}{4} + \frac{3d}{(d+2)} \|\nabla F(\bx)\|^2 \right)I, \label{eq:var_g_F_sp_progress}
\end{align}
where (a) is due to Assumption \ref{asm:F_smooth}; (b) follows from Lemma \ref{lem:k_moment_ss_sp_noise};
(c) is due to the fact that $\nabla F(\bx) \nabla F(\bx)^\top \preceq \|\nabla F(\bx)\|^2I$ since 
$\bv^\top \bigl(\|\nabla F(\bx)\|^2 I - \nabla F(\bx)\nabla F(\bx)^\top \bigr) \bv
= \|\nabla F(\bx)\|^2 \|\bv\|^2 - (\nabla F(\bx)^\top \bv)^2
\;\ge\; 0$ for all $\bv \in \R^d$.
Here, 
\begin{align}
\E_{S}[\|\bg_F^{{\rm sp}}(\bx,S) - \nabla F_{\mu,\maB}(\bx) \|^2] 
& = \E_{S}\left[\sum_{i=1}^d \big((\bg_F^{{\rm sp}}(\bx,S) - \nabla F_{\mu,\maB}(\bx))_i\big)^2\right] \nonumber \\
& = \E_{S}\left[ {\rm tr}\left( (\bg_F^{{\rm sp}}(\bx,S) - \nabla F_{\mu,\maB}(\bx))(\bg_F^{{\rm sp}}(\bx,S) - \nabla F_{\mu,\maB}(\bx))^\top\right)\right] \nonumber \\
& = {\rm tr}\left(\E_{S}\left[ (\bg_F^{{\rm sp}}(\bx,S) - \nabla F_{\mu,\maB}(\bx))(\bg_F^{{\rm sp}}(\bx,S) - \nabla F_{\mu,\maB}(\bx))^\top\right] \right)\nonumber \\
& \overset{(a)}{=} {\rm tr}({\rm Var}(\bg_F^{{\rm sp}}(\bx, S))), \label{eq:var_transform_sp}
\end{align}
where (a) comes from Lemma \ref{lem:estimator_sphere_unbiased}.
Therefore, from \eqref{eq:var_g_F_sp_progress},
\begin{align}
\E_{S}[\|\bg_F^{{\rm sp}}(\bx,S) - \nabla F_{\mu,\maB}(\bx) \|^2] & \le \frac{2d}{N}\left(\frac{M^2\mu^2d}{4} + \frac{3d}{(d+2)} \|\nabla F(\bx)\|^2 \right). \label{eq:g_ga_F_variance_1_sp}
\end{align}

Here,
\begin{align}
& \E_{S, \{\Xi_{1}^{(j)}\}_{j=1}^m, \{\Xi_{2}^{(j)}\}_{j=1}^m} \left[ \left\| \bg^{\rm{sp}}_m(\bx,S, \{\Xi_{1}^{(j)}\}_{j=1}^m, \{\Xi_{2}^{(j)}\}_{j=1}^m)  -\bg_F^{\rm{sp}}(\bx,S) \right\|^2  \right] \nonumber \\
& = \E_{S, \{\Xi_{1}^{(j)}\}_{j=1}^m, \{\Xi_{2}^{(j)}\}_{j=1}^m} \bigg[ \bigg\|\frac{d}{N}\sum_{i=1}^{N}\frac{\frac{1}{m}\sum_{j=1}^{m} f(\bx+\mu \bs^i, \bmxi^{1,i,(j)}) - \frac{1}{m}\sum_{j=1}^{m} f(\bx-\mu \bs^i, \bmxi^{2,i,(j)})}{2\mu}\bs^i \nonumber\\
    &\hspace{40mm} - \frac{d}{N} \sum_{i=1}^{N} \frac{F(\bx+\mu \bs^i)-F(\bx - \mu \bs^i)}{2\mu}\bs^i \bigg\|^2  \bigg] \nonumber\\
& = \E_{S, \{\Xi_{1}^{(j)}\}_{j=1}^m, \{\Xi_{2}^{(j)}\}_{j=1}^m} \bigg[ \bigg\|\frac{d}{N}\sum_{i=1}^{N}\frac{\frac{1}{m}\sum_{j=1}^{m} f(\bx+\mu \bs^i, \bmxi^{1,i,(j)}) - F(\bx+\mu \bs^i)}{2\mu}\bs^i \nonumber\\
    &\hspace{40mm} - \frac{d}{N} \sum_{i=1}^{N} \frac{\frac{1}{m}\sum_{j=1}^{m} f(\bx-\mu \bs^i, \bmxi^{2,i,(j)})-F(\bx - \mu \bs^i)}{2\mu}\bs^i \bigg\|^2  \bigg] \nonumber\\
& \le 2\E_{S,\{\Xi_{1}^{(j)}\}_{j=1}^m, \{\Xi_{2}^{(j)}\}_{j=1}^m} \Bigg[ \left\| \frac{d}{N}\sum_{i=1}^{N}\frac{\frac{1}{m}\sum_{j=1}^{m} f(\bx+\mu \bs^i, \bmxi^{1,i,(j)}) - F(\bx+\mu \bs^i)}{2\mu}\bs^i\right\|^2 \nonumber\\
    &\hspace{40mm} + \left\|\frac{d}{N} \sum_{i=1}^{N} \frac{\frac{1}{m}\sum_{j=1}^{m} f(\bx-\mu \bs^i, \bmxi^{2,i,(j)})-F(\bx - \mu \bs^i)}{2\mu}\bs^i \right\|^2  \Bigg] \nonumber\\
& = \frac{d^2}{2\mu^2N^2} \E_{S} \Bigg[ \E_{\{\Xi_{1}^{(j)}\}_{j=1}^m} \Bigg[ \left\| \sum_{i=1}^{N}\left(\frac{1}{m}\sum_{j=1}^{m} f(\bx+\mu \bs^i, \bmxi^{1,i,(j)}) - F(\bx+\mu \bs^i)\right)\bs^i\right\|^2 \Bigg]  \nonumber\\
    &\hspace{20mm} + \E_{\{\Xi_{2}^{(j)}\}_{j=1}^m} \Bigg[ \left\|\sum_{i=1}^{N} \left(\frac{1}{m}\sum_{j=1}^{m} f(\bx-\mu \bs^i, \bmxi^{2,i,(j)}) - F(\bx-\mu \bs^i)\right)\bs^i \right\|^2  \Bigg]  \Bigg] \nonumber \\
& \overset{(a)}{=}\frac{d^2}{2\mu^2N^2} \E_{S} \Bigg[ \E_{\{\Xi_{1}^{(j)}\}_{j=1}^m} \Bigg[ \sum_{i=1}^{N} \left\| \left(\frac{1}{m}\sum_{j=1}^{m} f(\bx+\mu \bs^i, \bmxi^{1,i,(j)}) - F(\bx+\mu \bs^i)\right)\bs^i\right\|^2 \Bigg]  \nonumber\\
    &\hspace{20mm} + \E_{\{\Xi_{2}^{(j)}\}_{j=1}^m} \Bigg[ \sum_{i=1}^{N} \left\| \left(\frac{1}{m}\sum_{j=1}^{m} f(\bx-\mu \bs^i, \bmxi^{2,i,(j)}) - F(\bx-\mu \bs^i)\right)\bs^i \right\|^2  \Bigg]  \Bigg] \nonumber \\
& \overset{(b)}{\le} \frac{d^2}{2\mu^2N^2} \E_{S} \left[\sum_{i=1}^N \frac{\sigma^2}{m} \| \bs^i\|^2 + \sum_{i=1}^N \frac{\sigma^2}{m}\| \bs^i\|^2 \right] \nonumber \\
& = \frac{\sigma^2d^2}{\mu^2 Nm}, \label{eq:var_partial_sp}
\end{align}
where (a) is due to the fact that, for $i\neq k$, 
$$\E_{\bmxi^{1,i,(j)},\bmxi^{1,k,(\ell)}}\left[ \left(f(\bx+\mu \bs^i, \bmxi^{1,i,(j)}) - F(\bx+\mu \bs^i)\right)\left(f(\bx+\mu \bs^k, \bmxi^{1,k,(\ell)}) - F(\bx+\mu \bs^k)\right)\bs^{i\top} \bs^k\right]=0, \ {\rm and}$$
$$\E_{\bmxi^{2,i,(j)},\bmxi^{2,k,(\ell)}}\left[ \left(f(\bx-\mu \bs^i, \bmxi^{2,i,(j)}) - F(\bx-\mu \bs^i)\right)\left(f(\bx-\mu \bs^k, \bmxi^{2,k,(\ell)}) - F(\bx-\mu \bs^k)\right)\bs^{i\top} \bs^k\right]=0;$$
(b) is due to Assumption \ref{asm:F_sample_var_bound} and Lemma \ref{lem:minibatch_var_reduction}.

Then,
\begin{align*}
& \E_{S, \{\Xi_{1}^{(j)}\}_{j=1}^m, \{\Xi_{2}^{(j)}\}_{j=1}^m} \left[ \left\| \bg^{\rm{sp}}_m(\bx,S, \{\Xi_{1}^{(j)}\}_{j=1}^m, \{\Xi_{2}^{(j)}\}_{j=1}^m)  - \nabla F(\bx)\right\|^2  \right] \\
& =\E_{S, \{\Xi_{1}^{(j)}\}_{j=1}^m, \{\Xi_{2}^{(j)}\}_{j=1}^m} \Big[ \Big\| \bg^{\rm{sp}}_m(\bx,S, \{\Xi_{1}^{(j)}\}_{j=1}^m, \{\Xi_{2}^{(j)}\}_{j=1}^m)  - \nabla F(\bx) \\
& \hspace{40mm} -\bg_F^{\rm{sp}}(\bx,S) + \bg_F^{\rm{sp}}(\bx,S)  - \nabla F_{\mu,\maB}(\bx) + \nabla F_{\mu,\maB}(\bx)\Big\|^2  \Big] \\
& \le 3 \E_{S, \{\Xi_{1}^{(j)}\}_{j=1}^m, \{\Xi_{2}^{(j)}\}_{j=1}^m} \big[ \| \bg^{\rm{sp}}_m(\bx,S, \{\Xi_{1}^{(j)}\}_{j=1}^m, \{\Xi_{2}^{(j)}\}_{j=1}^m) -\bg_F^{\rm{sp}}(\bx,S) \|^2 + \|\nabla F_{\mu,\maB}(\bx)  - \nabla F(\bx)\|^2   \\
&\hspace{40mm}  + \| \bg_F^{\rm{sp}}(\bx,S)  - \nabla F_{\mu,\maB}(\bx)\|^2 \big] \\ 
& \overset{(a)}{\le} \frac{3\sigma^2d^2}{\mu^2 Nm}+ 3M^2 \mu^2 + \frac{3M^2\mu^2d^2}{2N} + \frac{18d^2}{N(d+2)} \|\nabla F(\bx)\|^2,
\end{align*}
\normalsize
where (a) follows from \eqref{eq:g_ga_F_variance_1_sp}, \eqref{eq:var_partial_sp}, and Lemma \ref{lem:sp_smoothed_error}.
\end{proof}

\begin{lemma}
\label{lem:g_variance_sp_batch1_H_smooth}
Suppose that Assumptions \ref{asm:F_sample_var_bound} and \ref{asp:F_hessian_lipschitz} hold.
Then, the following holds.
\begin{align}
&\E_{S,\{\Xi_{1}^{(j)}\}_{j=1}^m, \{\Xi_{2}^{(j)}\}_{j=1}^m} \left[ \left\| \bg^{\rm{sp}}_m(\bx,S, \{\Xi_{1}^{(j)}\}_{j=1}^m, \{\Xi_{2}^{(j)}\}_{j=1}^m)  - \nabla F(\bx) \right\|^2  \right] \nonumber \\
&\le\frac{3\sigma^2d^2}{\mu^2 Nm} + 3\mu^4 H^2 + \frac{H^2\mu^4d^2}{6N} + \frac{18d^2}{N(d+2)} \|\nabla F(\bx)\|^2. \label{eq:sp_vound_Hessian_smooth}
\end{align}
\end{lemma}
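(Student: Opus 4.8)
The plan is to follow the proof of Lemma~\ref{lem:g_variance_sp_batch1} almost verbatim, changing only the two places where the smoothness Assumption~\ref{asm:F_smooth} entered and replacing them by the sharper third-order estimates that Assumption~\ref{asp:F_hessian_lipschitz} provides. I would begin from the same decomposition $\bg^{\mathrm{sp}}_m-\nabla F(\bx)=(\bg^{\mathrm{sp}}_m-\bg^{\mathrm{sp}}_F)+(\bg^{\mathrm{sp}}_F-\nabla F_{\mu,\maB}(\bx))+(\nabla F_{\mu,\maB}(\bx)-\nabla F(\bx))$ and bound its squared norm by $3$ times the sum of the three squared norms, so that the target \eqref{eq:sp_vound_Hessian_smooth} splits into a sampling-variance term, a direction-variance term, and a ball-smoothing bias term.

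The first piece is literally unchanged: the estimate \eqref{eq:var_partial_sp} invoked only Assumption~\ref{asm:F_sample_var_bound} and Lemma~\ref{lem:minibatch_var_reduction}, so $\E[\|\bg^{\mathrm{sp}}_m-\bg^{\mathrm{sp}}_F\|^2]\le \sigma^2 d^2/(\mu^2 Nm)$ still holds and contributes $3\sigma^2 d^2/(\mu^2 Nm)$. The only genuinely new work is the direction-variance piece $\E_S[\|\bg^{\mathrm{sp}}_F-\nabla F_{\mu,\maB}\|^2]=\mathrm{tr}(\mathrm{Var}(\bg^{\mathrm{sp}}_F))$ (via Lemma~\ref{lem:estimator_sphere_unbiased}, as in \eqref{eq:var_transform_sp}). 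I would reproduce the chain leading to \eqref{eq:var_g_F_sp_progress}, but at the step that used smoothness I would instead write the per-direction term as $\frac{F(\bx+\mu\bs)-F(\bx-\mu\bs)}{2\mu}-\nabla F(\bx)^\top\bs$ and bound it by the two-sided third-order Taylor estimate exactly as in \eqref{eq:hesse_lipschitz_continuous_first}, exploiting that the $\frac{\mu^2}{2}\bs^\top\nabla^2 F(\bx)\bs$ terms of the forward and backward expansions cancel and leaving an error of at most $\frac{H\mu^2\|\bs\|^3}{6}$. Since $\|\bs\|=1$ on the sphere, squaring gives $\frac{H^2\mu^4}{36}$, so $\mathrm{Var}(\bg^{\mathrm{sp}}_F)\preceq \frac{2d^2}{N}\E_{\bs\sim\maS}[\frac{H^2\mu^4}{36}\bs\bs^\top+(\nabla F(\bx)^\top\bs)^2\bs\bs^\top]$; applying the sphere moments of Lemma~\ref{lem:k_moment_ss_sp_noise} and taking the trace yields $\E_S[\|\bg^{\mathrm{sp}}_F-\nabla F_{\mu,\maB}\|^2]\le \frac{H^2\mu^4 d^2}{18N}+\frac{6d^2}{N(d+2)}\|\nabla F(\bx)\|^2$, whose threefold multiple gives the second and fourth summands. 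For the bias piece I would invoke the Hessian-Lipschitz counterpart of Lemma~\ref{lem:sp_smoothed_error}: because $\E_{\bs\sim\maB}[\bs]=\bm0$, the linear-in-$\bs$ term $\mu\nabla^2 F(\bx)\bs$ in $\nabla F_{\mu,\maB}(\bx)-\nabla F(\bx)=\E_{\bs\sim\maB}[\nabla F(\bx+\mu\bs)-\nabla F(\bx)]$ integrates to zero, improving the bias from $O(\mu)$ to $O(\mu^2)$ and giving $\|\nabla F_{\mu,\maB}(\bx)-\nabla F(\bx)\|^2\le \mu^4 H^2$, i.e.\ the $3\mu^4 H^2$ summand. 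Summing the three contributions reproduces \eqref{eq:sp_vound_Hessian_smooth}.

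The step I expect to require the most care is the direction-variance term: one must arrange the central-difference expansion so that the second-order (Hessian) terms genuinely cancel — which is precisely why the symmetric two-point estimator is used — and then track the constants correctly through the sphere moments of Lemma~\ref{lem:k_moment_ss_sp_noise} after using $\|\bs\|=1$. Everything else is a line-by-line transcription of the proof of Lemma~\ref{lem:g_variance_sp_batch1} with the factor $M\mu$ replaced by $H\mu^2$; conceptually, Assumption~\ref{asp:F_hessian_lipschitz} merely upgrades the per-direction finite-difference error from $O(\mu)$ to $O(\mu^2)$, exactly as in the passage from Lemma~\ref{lem:g_variance_FD} to Lemma~\ref{lem:g_variance_FD_H_smooth}.
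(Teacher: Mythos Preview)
Your proposal is correct and matches the paper's proof essentially line for line: the same three-term decomposition, the unchanged sampling-variance bound \eqref{eq:var_partial_sp}, the recomputation of $\mathrm{Var}(\bg_F^{\mathrm{sp}})$ with the second-order Taylor terms $\frac{\mu^2}{2}\bs^\top\nabla^2 F(\bx)\bs$ inserted so they cancel (yielding the $\frac{H\mu^2\|\bs\|^3}{6}$ error and then $\frac{H^2\mu^4}{36}$ after squaring, followed by Lemma~\ref{lem:k_moment_ss_sp_noise} and taking the trace), and the Hessian-Lipschitz half of Lemma~\ref{lem:sp_smoothed_error} for the bias term. The only cosmetic difference is that you invoke $\|\bs\|=1$ directly while the paper keeps $\|\bs\|^6$ and lets the sphere-moment lemma absorb it.
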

\begin{proof}
As in the derivation of \eqref{eq:covar_ap} in the proof of Lemma \ref{lem:g_variance_sp_batch1}, we obtain 
\begin{align*}
&{\rm Var}(\bg_F^{{\rm sp}}(\bx,S)) \\
& \preceq \frac{2d^2}{N}\E_{\bs \sim \maS}\bigg[ \bigg( \frac{F(\bx+\mu \bs)-F(\bx) - \mu \nabla F(\bx)^\top \bs}{2\mu} \nonumber \\
    &\hspace{30mm} - \frac{F(\bx-\mu \bs)-F(\bx) + \mu \nabla F(\bx)^\top \bs}{2\mu} \bigg)^2 \bs \bs^\top  + (\nabla F(\bx)^\top \bs)^2 \bs \bs^\top \bigg] \\
& =\frac{2d^2}{N}\E_{\bs \sim \maS}\bigg[ \bigg( \frac{F(\bx+\mu \bs)-F(\bx) - \mu \nabla F(\bx)^\top \bs - \frac{\mu^2}{2}\bs^\top \nabla^2F(\bx) \bs}{2\mu} \\
    &\hspace{30mm} - \frac{F(\bx-\mu \bs)-F(\bx) + \mu \nabla F(\bx)^\top \bs - \frac{\mu^2}{2}\bs^\top \nabla^2F(\bx) \bs}{2\mu} \bigg)^2 \bs \bs^\top  + (\nabla F(\bx)^\top \bs)^2 \bs \bs^\top \bigg].
\end{align*}
Then,
\begin{align*}
{\rm Var}(\bg_F^{{\rm sp}}(\bx,S)) 
& \overset{(a)}{\preceq}  \frac{2d^2}{N}\E_{\bs \sim \maS}\bigg[ \bigg( \frac{H\|\mu \bs\|^3}{12\mu} + \frac{H\|\mu \bs\|^3}{12\mu}\bigg)^2 \bs \bs^\top  + (\nabla F(\bx)^\top \bs)^2 \bs \bs^\top \bigg] \\
& = \frac{2d^2}{N}\E_{\bs \sim \maS}\bigg[ \bigg( \frac{H^2\mu^4 \|\bs\|^6}{36} \bigg) \bs \bs^\top  + (\nabla F(\bx)^\top \bs)^2 \bs \bs^\top \bigg] \\
& \overset{(b)}{=} \frac{2d^2}{N}\left(\frac{H^2\mu^4}{36d}I + \frac{\nabla F(\bx)^\top \nabla F(\bx)}{d(d+2)}  I + \frac{2}{d(d+2)} \nabla F(\bx) \nabla F(\bx)^\top\right)  \\
& \overset{(c)}{\preceq} \frac{2}{N}\left(\frac{H^2\mu^4d}{36} + \frac{3d}{d+2} \|\nabla F(\bx)\|^2\right) I,
\end{align*}
where (a) is due to Assumption \ref{asp:F_hessian_lipschitz}, and (b) follows from Lemma \ref{lem:k_moment_ss_sp_noise};
(c) is due to the fact that $\nabla F(\bx) \nabla F(\bx)^\top \preceq \|\nabla F(\bx)\|^2I$ as confirmed in the proof of Lemma \ref{lem:g_variance_sp_batch1}.
Then, 
\begin{align}
\E_S \|\bg_F^{{\rm sp}}(\bx,S) - \nabla F_{\mu,\maB}(\bx) \|^2 
&\overset{(a)}{=}{\rm tr}({\rm Var}(\bg_F^{{\rm sp}}(\bx, S))) = \frac{2d}{N}\left(\frac{H^2\mu^4d}{36} + \frac{3d}{d+2} \|\nabla F(\bx)\|^2\right), \label{eq:g_sp_F_variance}
\end{align}
where (a) holds as in the derivation of \eqref{eq:var_transform_sp} in the proof of Lemma \ref{lem:g_variance_sp_batch1},

Moreover, similar to the proof of Lemma \ref{lem:g_variance_sp_batch1},
\begin{align}
& \E_{S, \{\Xi_{1}^{(j)}\}_{j=1}^m, \{\Xi_{2}^{(j)}\}_{j=1}^m} \left[ \left\| \bg^{\rm{sp}}_m(\bx,S, \{\Xi_{1}^{(j)}\}_{j=1}^m, \{\Xi_{2}^{(j)}\}_{j=1}^m)  -\bg_F^{\rm{sp}}(\bx,S) \right\|^2  \right] \le \frac{\sigma^2d^2}{\mu^2 Nm}, \label{eq:var_partial_2_sp}
\end{align}

Then, 
\begin{align*}
& \E_{S, \{\Xi_{1}^{(j)}\}_{j=1}^m, \{\Xi_{2}^{(j)}\}_{j=1}^m} \left[ \left\| \bg^{\rm{sp}}_m(\bx,S, \{\Xi_{1}^{(j)}\}_{j=1}^m, \{\Xi_{2}^{(j)}\}_{j=1}^m)  - \nabla F(\bx)\right\|^2  \right] \\
& =\E_{S, \{\Xi_{1}^{(j)}\}_{j=1}^m, \{\Xi_{2}^{(j)}\}_{j=1}^m} [ \| \bg^{\rm{sp}}_m(\bx,S, \{\Xi_{1}^{(j)}\}_{j=1}^m, \{\Xi_{2}^{(j)}\}_{j=1}^m) - \nabla F(\bx) \\
&\hspace{40mm} + \bg_F^{\rm{sp}}(\bx,S) -\bg_F^{\rm{sp}}(\bx,S)  + \nabla F_{\mu,\maB}(\bx)  
 - \nabla F_{\mu,\maB}(\bx) 
\|^2  ] \\
& \le 3 \E_{S, \{\Xi_{1}^{(j)}\}_{j=1}^m, \{\Xi_{2}^{(j)}\}_{j=1}^m} \big[ \| \bg^{\rm{sp}}_m(\bx,S, \{\Xi_{1}^{(j)}\}_{j=1}^m, \{\Xi_{2}^{(j)}\}_{j=1}^m) -\bg_F^{\rm{sp}}(\bx,S) \|^2  + \|\nabla F_{\mu,\maB}(\bx)  - \nabla F(\bx)\|^2  \\
& \hspace{40mm} + \| \bg_F^{\rm{sp}}(\bx,S)  - \nabla F_{\mu,\maB}(\bx)\|^2 \big] \\ 
& \overset{(a)}{\le} \frac{3\sigma^2d^2}{\mu^2 Nm} + 3\mu^4 H^2 + \frac{H^2\mu^4d^2}{6N} + \frac{18d^2}{N(d+2)} \|\nabla F(\bx)\|^2,
\end{align*}
\normalsize
where (a) follows from \eqref{eq:g_sp_F_variance}, \eqref{eq:var_partial_2_sp}, and Lemma \ref{lem:sp_smoothed_error}.
\end{proof}

\paragraph{Intuitive interpretation of Lemmas~\ref{lem:g_variance_sp_batch1} and~\ref{lem:g_variance_sp_batch1_H_smooth}.}
The first term $\left(\frac{3\sigma^2d^2}{\mu^2 Nm}\right)$ in each upper bound is the error incurred when approximating $F(\bx)$ by a finite-sample average of $f(\bx,\bmxi)$.
As discussed in Section~\ref{subsec:basic_form}, this term is amplified as the smoothing parameter $\mu$ decreases.
The second term ($3M^2 \mu^2$ in Lemma~\ref{lem:g_variance_sp_batch1}  and $3\mu^4 H^2$ in Lemma \ref{lem:g_variance_sp_batch1_H_smooth}) measures the discrepancy between the true gradient ($\nabla F$) and the smoothed gradient ($\nabla F_{\mu,\maB}$).\footnote{
As shown in Lemma \ref{lem:estimator_sphere_unbiased}, $\E[\bg^{\rm{sp}}_m(\bx, S,\{\Xi_1^{(j)}\}_{j=1}^m, \{\Xi_2^{(j)}\}_{j=1}^m)]=\nabla F_{\mu,\maB}(\bx)$, so $\bg^{\rm{sp}}_m(\bx, S,\{\Xi_1^{(j)}\}_{j=1}^m, \{\Xi_2^{(j)}\}_{j=1}^m)$ does not directly estimate $\nabla F(\bx)$ but rather the smoothed gradient $\nabla F_{\mu,\maB}(\bx)$; this is the source of the error term.}
Because $F_{\mu,\maB}(\bx)\to F(\bx)$ as $\mu\to 0$ by the definition, this error diminishes for smaller $\mu$.
The third and fourth terms are caused by the variance arising from the use of random directions $\{\bs^i\}_{i=1}^N$.
These variance terms also shrink with the smoothing parameter $\mu$.

\paragraph{Difference from the gradient estimator with coordinate-wise directions.}
Lemmas~\ref{lem:g_variance_sp_batch1} and~\ref{lem:g_variance_sp_batch1_H_smooth} differ from Lemmas~\ref{lem:g_variance_FD} and~\ref{lem:g_variance_FD_H_smooth} in two points:
(i) the second term (the bias from smoothing) loses its dependence on the dimension $d$; and
(ii) the third and fourth terms (the variance from random directions) arise additionally.
From a sample complexity perspective, this trade-off is favorable: (i) because the second-term bias is smaller, we need not choose $\mu$ excessively small to control the bias, thereby avoiding amplification of the error of the first term; 
(ii) when the number of samples $N$ is taken large enough to reduce the first term, the added third and fourth terms become negligible due to $N^{-1}$.
As a results, the gradient estimator with directions drawn uniformly from the unit sphere achieves smaller sample complexity than that of coordinate-wise directions (Theorem \ref{thm:sp_sample_complexity} as shown later).

\paragraph{Dominance of \(N\) over \(m\).}
The gradient estimator \eqref{eq:mini_batch_gradient_est_sp} has two tunable parameters: $N$, the number of random directions, and $m$, the number of samples of $\bmxi$. 
Increasing either parameter raises the sample complexity by the same amount; however, increasing $N$ yields a tighter upper bound on the distance between the gradient estimator and true gradient. 
Intuitively, increasing $N$ not only reduces the error in random directions $\bs^i$ but also reduces the approximation error of $F$ due to finite samples by resampling $\bmxi$ in each direction.
By contrast, increasing $\bmxi$ reduces only the approximation error of $F$ due to finite samples.

Then, we show the sample complexity for Algorithm \ref{alg:simple} with $\bg^{\rm{sp}}_m$.
\begin{theorem} \label{thm:sp_sample_complexity}
Suppose that $\epsilon\le \frac{1}{3}$, Assumptions \ref{asm:F_sample_var_bound} and \ref{asm:F_smooth} hold.
Let $\bg_t:=\bg^{\rm{sp}}_m(\bx_t, S,\{\Xi_1^{(j)}\}_{j=1}^m, \{\Xi_2^{(j)}\}_{j=1}^m)$, where $\bg^{\rm{sp}}_m$ is defined by 
\eqref{eq:mini_batch_gradient_est_sp}.
Let $\mu:=\Theta(\epsilon)$, $N:=d^2 \epsilon^{-4}$, $\eta\le \frac{1}{4M}$, $m:=1$, and $T:=\Theta(\epsilon^{-2})$.
Then, the sample complexity, to satisfy $\E[\|\nabla F(\bar{\bx})\|^2]\le \epsilon^2$ for output $\bar{\bx}$ of Algorithm \ref{alg:simple}, is $O(d^2\epsilon^{-6})$.
\end{theorem}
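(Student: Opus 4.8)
The plan is to combine the descent inequality of Lemma~\ref{lem:simple_descent} with the estimator-error bound of Lemma~\ref{lem:g_variance_sp_batch1}, and then to dispose of the one term in that bound that depends on $\|\nabla F(\bx_t)\|^2$ by an absorption argument. First I would invoke Lemma~\ref{lem:simple_descent} to write
\[
\E[\|\nabla F(\bar{\bx})\|^2] \le \frac{4(F(\bx_0)-F^*)}{\eta(T+1)} + \frac{3}{T+1}\sum_{t=0}^T \E_{\bg_{[t]}}[\|\nabla F(\bx_t)-\bg_t\|^2],
\]
and then substitute the bound of Lemma~\ref{lem:g_variance_sp_batch1} for each summand, taking the outer expectation over $\bg_{[t-1]}$ (which determines $\bx_t$) so that the gradient term becomes $\E[\|\nabla F(\bx_t)\|^2]$. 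This produces four groups of contributions: the optimization term, the finite-sample term $\tfrac{3\sigma^2 d^2}{\mu^2 N m}$, the smoothing-bias and direction-variance terms $3M^2\mu^2$ and $\tfrac{3M^2\mu^2 d^2}{2N}$, and crucially the gradient-dependent term $\tfrac{18 d^2}{N(d+2)}\E[\|\nabla F(\bx_t)\|^2]$.

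The main obstacle---and the key difference from the coordinate-wise analysis of Theorems~\ref{thm:sample_complexity_co_grad_lip} and~\ref{thm:sample_complexity_co_Hess_lip}, where the estimator-error bound was gradient-free---is precisely this last term, which cannot simply be plugged in as a constant. I would resolve it by noting that, since $\bar{\bx}$ is drawn uniformly from $\{\bx_t\}_{t=0}^T$,
\[
\frac{1}{T+1}\sum_{t=0}^T \E[\|\nabla F(\bx_t)\|^2] = \E[\|\nabla F(\bar{\bx})\|^2],
\]
so the gradient-dependent contribution equals $\tfrac{54 d^2}{N(d+2)}\,\E[\|\nabla F(\bar{\bx})\|^2]$, exactly the quantity on the left. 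Moving it across leaves a factor $\bigl(1 - \tfrac{54 d^2}{N(d+2)}\bigr)$ multiplying $\E[\|\nabla F(\bar{\bx})\|^2]$. With $N = d^2\epsilon^{-4}$ this coefficient simplifies to $\tfrac{54\epsilon^4}{d+2} \le 18\epsilon^4$, and the hypothesis $\epsilon \le \tfrac{1}{3}$ forces $18\epsilon^4 \le \tfrac{2}{9} < 1$; hence the factor is at least $\tfrac{7}{9}$ and can be divided out at the cost of an absolute constant. This is exactly where the standing assumption $\epsilon \le \tfrac{1}{3}$ enters.

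It then remains to verify that each surviving term is $O(\epsilon^2)$ under $\mu = \Theta(\epsilon)$, $N = d^2\epsilon^{-4}$, $m=1$, and $T = \Theta(\epsilon^{-2})$. The optimization term is $\Theta(\epsilon^2)$ since $T = \Theta(\epsilon^{-2})$; the finite-sample term becomes $\tfrac{3\sigma^2 d^2}{\Theta(\epsilon^2)\,d^2\epsilon^{-4}} = \Theta(\epsilon^2)$; the smoothing-bias term $3M^2\mu^2 = \Theta(\epsilon^2)$; and the direction-variance term $\tfrac{3M^2\mu^2 d^2}{2N} = \Theta(\epsilon^6)$, which is negligible. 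Combining these yields $\E[\|\nabla F(\bar{\bx})\|^2] = O(\epsilon^2)$. Finally, since each evaluation of $\bg^{\rm{sp}}_m$ draws $\Theta(Nm)$ samples of $\bmxi$ (two per direction, over $N$ directions and $m$ batches), the total sample complexity is $O(NmT) = O(d^2\epsilon^{-4}\cdot 1\cdot \epsilon^{-2}) = O(d^2\epsilon^{-6})$, as claimed.
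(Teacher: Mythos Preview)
Your proposal is correct and follows essentially the same route as the paper: both combine Lemma~\ref{lem:simple_descent} with Lemma~\ref{lem:g_variance_sp_batch1}, absorb the gradient-dependent term $\tfrac{54d^2}{N(d+2)}\E[\|\nabla F(\bx_t)\|^2]$ into the left-hand side via the identity $\tfrac{1}{T+1}\sum_t \E[\|\nabla F(\bx_t)\|^2]=\E[\|\nabla F(\bar\bx)\|^2]$, and then verify that the remaining terms are $O(\epsilon^2)$ under the stated parameter choices. Your bound $\tfrac{54\epsilon^4}{d+2}\le 18\epsilon^4\le \tfrac{2}{9}$ is exactly the paper's computation (using $d\ge 1$ and $\epsilon\le\tfrac{1}{3}$), and the final sample-complexity count $O(NmT)=O(d^2\epsilon^{-6})$ matches as well.
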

\begin{proof}
We have
\small
\begin{align*}
&\E[\|\nabla F(\bar{\bx})\|^2] \\
& \overset{(a)}{\le} \frac{4(F(\bx_0) - F^*)}{\eta (T+1)}  + \frac{3}{T+1} \sum_{t=0}^T\E_{\bg_{[t]}}[\|\nabla F(\bx_t)-\bg_t\|^2] \\
& \overset{(b)}{\le} 
\frac{4(F(\bx_0) - F^*)}{\eta (T+1)}  + 
\frac{9\sigma^2d^2}{\mu^2 Nm} + 9M^2 \mu^2 + \frac{9M^2\mu^2d^2}{2N} + \frac{54d^2}{N(d+2)(T+1)} \sum_{t=0}^T \E_{\bg_{[t]}} [\|\nabla F(\bx_t)\|^2]\\
&\overset{(c)}{\le} O(\epsilon^2) + 54(d+2)^{-1} \epsilon^4 \frac{1}{T+1} \sum_{t=0}^T\E_{\bg_{[t]}} [\|\nabla F(\bx_t)\|^2]\\
&\overset{(d)}{\le} O(\epsilon^2) + \frac{2}{9} \frac{1}{T+1} \sum_{t=0}^T\E_{\bg_{[t]}} [\|\nabla F(\bx_t)\|^2],
\end{align*}
\normalsize
where (a) comes from Lemma \ref{lem:simple_descent}; 
(b) follows from Lemma \ref{lem:g_variance_sp_batch1};
(c) holds since $T=\Theta(\epsilon^{-2})$, $\mu:=\Theta(\epsilon)$, $N=d^2\epsilon^{-4}$, and $m=1$;
(d) is due to $\epsilon\le \frac{1}{3}$ and $d\ge 1$.

Since $\frac{1}{T+1} \sum_{t=0}^T\E_{\bg_{[t]}} [\|\nabla F(\bx_t)\|^2]=\E[\|\nabla F(\bar{\bx})\|^2]$,
\begin{align*}
&\left(1- \frac{2}{9}\right)\E[\|\nabla F(\bar{\bx})\|^2] = O(\epsilon^2). 
\end{align*}
Therefore,
\begin{align*}    
&\E[\|\nabla F(\bar{\bx})\|^2] = \frac{9}{7} O(\epsilon^2)=O(\epsilon^2).
\end{align*}
Then, the sample complexity is $O(NmT)=O(\epsilon^{-6}d^2)$.
\end{proof}

\begin{theorem} \label{thm:sample_complexity_sp_Hessian_lip}
Suppose that $\epsilon\le \frac{1}{3}$, Assumptions \ref{asm:F_sample_var_bound}--\ref{asp:F_hessian_lipschitz} hold.
Let $\bg_t:=\bg^{\rm{sp}}_m(\bx_t, S,\{\Xi_1^{(j)}\}_{j=1}^m, \{\Xi_2^{(j)}\}_{j=1}^m)$, where $\bg^{\rm{sp}}_m$ is defined by 
\eqref{eq:mini_batch_gradient_est_sp}.
Let $\mu:=\Theta(\epsilon^{\frac{1}{2}} )$, $N:=\epsilon^{-3}d^2$, $m=1$, $\eta\le \frac{1}{4M}$, and $T:=\Theta(\epsilon^{-2})$.
Then, the sample complexity, to satisfy $\E[\|\nabla F(\bar{\bx})\|^2]\le \epsilon^2$ for output $\bar{\bx}$ of Algorithm \ref{alg:simple}, is $O(NmT)=O(d^2\epsilon^{-5})$.
\end{theorem}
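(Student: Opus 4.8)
The plan is to mirror the proof of Theorem~\ref{thm:sp_sample_complexity} almost verbatim, replacing the $M$-smoothness estimate of Lemma~\ref{lem:g_variance_sp_batch1} with the sharper Hessian-Lipschitz estimate of Lemma~\ref{lem:g_variance_sp_batch1_H_smooth}. First I would apply Lemma~\ref{lem:simple_descent} (legitimate since $\eta\le\frac{1}{4M}$) to obtain
\[
\E[\|\nabla F(\bar{\bx})\|^2]\le \frac{4(F(\bx_0)-F^*)}{\eta(T+1)}+\frac{3}{T+1}\sum_{t=0}^T \E_{\bg_{[t]}}[\|\nabla F(\bx_t)-\bg_t\|^2],
\]
and then substitute the bound of Lemma~\ref{lem:g_variance_sp_batch1_H_smooth} with $m=1$ into each summand. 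This yields the descent term, three further deterministic terms $\frac{9\sigma^2 d^2}{\mu^2 N}$, $9\mu^4 H^2$, $\frac{H^2\mu^4 d^2}{2N}$, and the gradient-norm term $\frac{54 d^2}{N(d+2)}\cdot\frac{1}{T+1}\sum_{t=0}^T\E_{\bg_{[t]}}[\|\nabla F(\bx_t)\|^2]$.

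The core of the argument is checking that the stated parameters make each deterministic term $O(\epsilon^2)$. With $T=\Theta(\epsilon^{-2})$ the descent term is $O(\epsilon^2)$. The crucial balance lies in $\mu$: substituting $N=\epsilon^{-3}d^2$ makes the finite-sample term equal to $\Theta(\epsilon^3/\mu^2)$ and the smoothing-bias term equal to $\Theta(\mu^4)$, which pull $\mu$ in opposite directions; both are simultaneously $O(\epsilon^2)$ exactly when $\mu=\Theta(\epsilon^{1/2})$, fixing that choice. This is precisely where the Hessian-Lipschitz assumption pays off, since its $\mu^4$ bias (versus the $\mu^2$ bias of Lemma~\ref{lem:g_variance_sp_batch1}) permits a larger $\mu$ and hence a smaller $N$. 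The leftover term $\frac{H^2\mu^4 d^2}{2N}=\Theta(\epsilon^5)$ is then negligible.

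The step I expect to be the main obstacle—and the one that forces $N=\epsilon^{-3}d^2$—is handling the self-referential gradient-norm term. Because $\bar{\bx}$ is drawn uniformly from $\{\bx_t\}_{t=0}^T$, one has $\frac{1}{T+1}\sum_{t=0}^T\E_{\bg_{[t]}}[\|\nabla F(\bx_t)\|^2]=\E[\|\nabla F(\bar{\bx})\|^2]$, so this term is a constant multiple of the quantity being bounded. With $N=\epsilon^{-3}d^2$ its coefficient is $\frac{54 d^2}{N(d+2)}=\frac{54\epsilon^3}{d+2}\le 18\epsilon^3\le\frac{2}{3}$, using $d\ge 1$ and $\epsilon\le\frac13$. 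Moving it to the left-hand side gives $\bigl(1-\frac23\bigr)\E[\|\nabla F(\bar{\bx})\|^2]\le O(\epsilon^2)$, hence $\E[\|\nabla F(\bar{\bx})\|^2]=O(\epsilon^2)$. Finally, the sample complexity is the per-iteration cost $N$ times the batch size $m$ times the iteration count $T$, namely $O(NmT)=O(\epsilon^{-3}d^2\cdot 1\cdot\epsilon^{-2})=O(d^2\epsilon^{-5})$, as claimed.
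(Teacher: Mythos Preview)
Your proposal is correct and follows essentially the same approach as the paper's own proof: apply Lemma~\ref{lem:simple_descent}, plug in the bound of Lemma~\ref{lem:g_variance_sp_batch1_H_smooth}, verify each deterministic term is $O(\epsilon^2)$ under the stated parameter choices, and absorb the self-referential gradient-norm term (whose coefficient $\frac{54\epsilon^3}{d+2}\le\frac{2}{3}$) into the left-hand side. Your additional commentary on why $\mu=\Theta(\epsilon^{1/2})$ balances the $\Theta(\epsilon^3/\mu^2)$ and $\Theta(\mu^4)$ terms is a nice touch that the paper does not spell out.
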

\begin{proof}
We have
\small
\begin{align*}
\E[\|\nabla F(\bar{\bx})\|^2] 
& \overset{(a)}{\le} \frac{4(F(\bx_0) - F^*)}{\eta (T+1)}  + \frac{3}{T+1} \sum_{t=0}^T\E_{\bg_{[t]}}[\|\nabla F(\bx_t)-\bg_t\|^2] \\
& \overset{(b)}{\le} 
\frac{4(F(\bx_0) - F^*)}{\eta (T+1)} +
\frac{9\sigma^2d^2}{\mu^2 Nm} + 9\mu^4 H^2 + \frac{H^2\mu^4d^2}{2N} + \frac{54d^2}{N(d+2)} \frac{1}{T+1} \sum_{t=0}^T\E_{\bg_{[t]}} [\|\nabla F(\bx_t)\|^2] \\
&\overset{(c)}{=} O(\epsilon^2) + 54 \epsilon^3(d+2)^{-1} \frac{1}{T+1} \sum_{t=0}^T\E_{\bg_{[t]}} [\|\nabla F(\bx_t)\|^2]\\
&\overset{(d)}{\le} O(\epsilon^2) + \frac{2}{3}  \frac{1}{T+1} \sum_{t=0}^T\E_{\bg_{[t]}} [\|\nabla F(\bx_t)\|^2],
\end{align*}
\normalsize
where (a) comes from Lemma \ref{lem:simple_descent}; 
(b) follows from Lemma \ref{lem:g_variance_sp_batch1_H_smooth};
(c) holds since $T=\Theta(\epsilon^{-2})$, $\mu:=\Theta(\epsilon^{\frac{1}{2}})$, $N=d^2\epsilon^{-3}$, and $m=1$;
(d) is due to $d\ge 1$ and $\epsilon \le \frac{1}{3}$.
Since $\E[\|\nabla F(\bar{\bx})\|^2]=\frac{1}{T+1} \sum_{t=0}^T\E_{\bg_{[t]}} [\|\nabla F(\bx_t)\|^2]$,
\begin{align*}
&\left(1-\frac{2}{3}\right)\E[\|\nabla F(\bar{\bx})\|^2] = O(\epsilon^2). 
\end{align*}
Therefore,
\begin{align*}    
&\E[\|\nabla F(\bar{\bx})\|^2] = 3 O(\epsilon^2)=O(\epsilon^2).
\end{align*}
Therefore, the sample complexity is $O(NmT)=O(\epsilon^{-5}d^2)$.
\end{proof}

\subsubsection{Gaussian-smoothed gradient estimator} \label{subsec:gaussian_estimator}
The gaussian-smoothed gradient estimator can be written as follows: 
\begin{align}
\bg^{\rm{ga}}(\bx, U, \Xi_1, \Xi_2):= &\frac{1}{N} \sum_{i=1}^{N} \frac{f(\bx+\mu \bu^i,\bmxi^{1,i})-f(\bx - \mu \bu^i,\bmxi^{2,i})}{2 \mu} \bu^i. \label{eq:one_batch_gradient_est_ga}
\end{align}
Here, $\mu \in \R_{>0}$; 
$U:=\{\bu^i\}_{i=1}^N$, where $\bu^i \sim \mathcal N(0, I)$; 
$\Xi_1:=\{\bmxi^{1,i}\}_{i=1}^N$, where $\bmxi^{1,i} \sim D(\bx+\mu \bu^i)$;
$\Xi_2:=\{\bmxi^{2,i}\}_{i=1}^N$, where $\bmxi^{2,i} \sim D(\bx-\mu \bu^i)$.
We consider the mini-batch version of the gradient estimator:
\begin{align}
\bg^{\rm{ga}}_m(\bx, U,\{\Xi_1^{(j)}\}_{j=1}^m, \{\Xi_2^{(j)}\}_{j=1}^m): =\frac{1}{m}\sum_{j=1}^{m} \bg^{\rm{ga}}(\bx,U,\Xi_1^{(j)},\Xi_2^{(j)}). \label{eq:mini_batch_gradient_est_ga}
\end{align}
Here, we let
\begin{align}
&F_{\mu,\maN}(\bx):= \E_{\bu \sim \maN(0,I)}[F(\bx + \mu \bu)],\label{def:F_mu_ga}
\end{align}
which is called the Gaussian-smoothed function of $F$.
Then, the following lemma holds for the gradient estimator from \cite[(26)]{nesterov2017random}.
\begin{lemma} \cite[(26)]{nesterov2017random} \label{lem:estimator_gaussian_unbiased}
Let $\bg^{\rm{ga}}_m(\bx, U,\{\Xi_1^{(j)}\}_{j=1}^m, \{\Xi_2^{(j)}\}_{j=1}^m)$ be defined by \eqref{eq:mini_batch_gradient_est_ga} and 
$F_{\mu,\maN}(\bx)$ be defined by \eqref{def:F_mu_ga}.
Then, 
\begin{align*}
\E_{U, \{\Xi_{1}^{(j)}\}_{j=1}^m, \{\Xi_{2}^{(j)}\}_{j=1}^m}\left[\bg^{\rm{ga}}_m(\bx, U,\{\Xi_1^{(j)}\}_{j=1}^m, \{\Xi_2^{(j)}\}_{j=1}^m)\right]
&= \E_{\bu \sim \maN(0, \bI)}\left[\frac{F(\bx+\mu \bu)-F(\bx - \mu \bu)}{2\mu}\bu \right] \\
&=\nabla F_{\mu,\maN}(\bx),
\end{align*}
\end{lemma}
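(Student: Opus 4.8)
The plan is to establish the two stated equalities in turn. For the first equality, I would invoke the law of iterated expectations, integrating over the samples $\bmxi$ conditionally on the directions $U=\{\bu^i\}_{i=1}^N$. By linearity, the expectation of $\bg^{\rm{ga}}_m$ equals the average of its $mN$ identically distributed summands, so it suffices to evaluate the expectation of a single term $\frac{1}{2\mu}\bigl(f(\bx+\mu\bu^i,\bmxi^{1,i,(j)})-f(\bx-\mu\bu^i,\bmxi^{2,i,(j)})\bigr)\bu^i$. Conditioning on $\bu^i$, the samples satisfy $\bmxi^{1,i,(j)}\sim D(\bx+\mu\bu^i)$ and $\bmxi^{2,i,(j)}\sim D(\bx-\mu\bu^i)$, so the defining relation $F(\by)=\E_{\bmxi\sim D(\by)}[f(\by,\bmxi)]$ collapses the inner expectations to $F(\bx+\mu\bu^i)$ and $F(\bx-\mu\bu^i)$. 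Taking the outer expectation over $\bu^i\sim\maN(0,\bI)$ yields $\E_{\bu\sim\maN(0,\bI)}\bigl[\frac{F(\bx+\mu\bu)-F(\bx-\mu\bu)}{2\mu}\bu\bigr]$ for each term, and averaging over the $mN$ terms gives the first equality.

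For the second equality, I would appeal to the standard Gaussian-smoothing identity of \citep{nesterov2017random}. Writing $F_{\mu,\maN}(\bx)=\frac{1}{(2\pi)^{d/2}\mu^d}\int F(\by)\exp(-\|\by-\bx\|^2/(2\mu^2))\,d\by$ via the change of variables $\by=\bx+\mu\bu$, differentiation under the integral sign produces $\nabla F_{\mu,\maN}(\bx)=\frac{1}{\mu}\E_{\bu\sim\maN(0,\bI)}[F(\bx+\mu\bu)\bu]$. The symmetry of the standard Gaussian, namely that $-\bu$ has the same law as $\bu$, gives $\E_{\bu}[F(\bx-\mu\bu)\bu]=-\E_{\bu}[F(\bx+\mu\bu)\bu]$, so the antisymmetrized expression $\E_{\bu}\bigl[\frac{F(\bx+\mu\bu)-F(\bx-\mu\bu)}{2\mu}\bu\bigr]$ coincides with $\frac{1}{\mu}\E_{\bu}[F(\bx+\mu\bu)\bu]=\nabla F_{\mu,\maN}(\bx)$.

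I do not anticipate a genuine obstacle, as both equalities are essentially bookkeeping. The only points requiring care are (i) applying the tower property correctly, so that the direction variables $\bu^i$, which are shared across all mini-batch indices $j$, are integrated in the outer expectation while the $\bmxi$ samples are integrated in the inner one; and (ii) justifying the interchange of gradient and integral in the second step, which is licensed by the smoothness of $F$ (Assumption~\ref{asm:F_smooth}) together with the rapid decay of the Gaussian density. Since the second equality is exactly equation~(26) of \citep{nesterov2017random}, it can simply be cited rather than rederived.
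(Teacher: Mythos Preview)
Your proposal is correct. The paper does not actually supply a proof for this lemma; it simply cites \cite[(26)]{nesterov2017random} in the lemma header, treating both equalities as known. Your argument---tower property over $(\Xi,U)$ for the first equality, and the Nesterov--Spokoiny identity $\nabla F_{\mu,\maN}(\bx)=\mu^{-1}\E_{\bu}[F(\bx+\mu\bu)\bu]$ combined with the symmetry $\bu\overset{d}{=}-\bu$ for the second---is the standard derivation and is more detailed than what the paper provides. Your observation that the directions $U$ are shared across mini-batch indices $j$ while the $\bmxi$ samples are fresh is also accurate and worth noting, though it does not affect the unbiasedness computation since the final expectation over $U$ still reduces each summand to the same quantity.
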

From Lemma \ref{lem:estimator_gaussian_unbiased}, $\bg^{\mathrm{ga}}_m$ serves as an estimator of the gradient of the Gaussian-smoothed function $F_{\mu,\maN}$.
Then, we can bound the distance between the true gradient and the gradient estimator by the following lemmas.
In the proof, we let
\begin{align*}  
&\bg_F^{{\rm ga}}(\bx, U):= \frac{1}{N} \sum_{i=1}^N  \frac{F(\bx+\mu \bu^i)-F(\bx - \mu \bu^i)}{2\mu}\bu^i.
\end{align*}

\begin{lemma}
\label{lem:g_variance_Ga_batch1}
Suppose that Assumptions \ref{asm:F_sample_var_bound} and \ref{asm:F_smooth} hold.
Then, the following holds.
\begin{align*}
&\E_{U,\{\Xi_{1}^{(j)}\}_{j=1}^m, \{\Xi_{2}^{(j)}\}_{j=1}^m} \left[ \left\| \bg^{\rm{ga}}_m(\bx,U, \{\Xi_{1}^{(j)}\}_{j=1}^m, \{\Xi_{2}^{(j)}\}_{j=1}^m)  - \nabla F(\bx) \right\|^2  \right] \\
&\le \frac{3\sigma^2d}{\mu^2 N m}+ 3\mu^2 M^2 d + 
\frac{3dM^2\mu^2}{2N}(d+2)(d+4)
+\frac{18d}{N} \|\nabla F(\bx)\|^2.
\end{align*}
\end{lemma}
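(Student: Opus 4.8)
The plan is to follow the same three-way decomposition used in the proof of Lemma~\ref{lem:g_variance_sp_batch1}, replacing the uniform-sphere moments by their Gaussian counterparts. I introduce the noiseless auxiliary estimator $\bg_F^{{\rm ga}}(\bx,U)$ defined just before the statement, and write
\[
\bg^{{\rm ga}}_m - \nabla F(\bx)
= \bigl(\bg^{{\rm ga}}_m - \bg_F^{{\rm ga}}\bigr)
+ \bigl(\bg_F^{{\rm ga}} - \nabla F_{\mu,\maN}(\bx)\bigr)
+ \bigl(\nabla F_{\mu,\maN}(\bx) - \nabla F(\bx)\bigr),
\]
then apply $\|a+b+c\|^2\le 3(\|a\|^2+\|b\|^2+\|c\|^2)$ and take expectations. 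The three resulting terms are, respectively, the sampling error of approximating $F$ by averages of $f$, the variance coming from the random directions $\{\bu^i\}$, and the deterministic bias between the Gaussian-smoothed gradient $\nabla F_{\mu,\maN}$ and $\nabla F$. Lemma~\ref{lem:estimator_gaussian_unbiased} supplies $\E[\bg^{{\rm ga}}_m]=\nabla F_{\mu,\maN}(\bx)$, which makes the middle term a genuine variance.

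The heart of the proof is bounding the variance term. Since the $\bu^i$ are i.i.d., $\mathrm{Var}(\bg_F^{{\rm ga}})=\tfrac1N\,\mathrm{Var}\bigl(\tfrac{F(\bx+\mu\bu)-F(\bx-\mu\bu)}{2\mu}\bu\bigr)$, which I bound above (in the PSD order $\preceq$) by the corresponding second moment. I then write $F(\bx+\mu\bu)-F(\bx-\mu\bu) = \bigl[F(\bx+\mu\bu)-F(\bx-\mu\bu)-2\mu\nabla F(\bx)^\top\bu\bigr] + 2\mu\nabla F(\bx)^\top\bu$ and use $(a+b)^2\le 2a^2+2b^2$, exactly as in \eqref{eq:covar_ap}. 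Assumption~\ref{asm:F_smooth} controls the first bracket by $\tfrac{M\mu\|\bu\|^2}{4}$ on each of the forward and backward pieces, leaving the two Gaussian moments $\E[\|\bu\|^4\bu\bu^\top]$ and $\E[(\nabla F(\bx)^\top\bu)^2\bu\bu^\top]$. Here the computation departs from the sphere case: for $\bu\sim\maN(0,\bI)$ one has $\E[\|\bu\|^4\bu\bu^\top]=(d+2)(d+4)\bI$ and $\E[(\ba^\top\bu)^2\bu\bu^\top]=\|\ba\|^2\bI+2\ba\ba^\top$. Using $\nabla F(\bx)\nabla F(\bx)^\top\preceq\|\nabla F(\bx)\|^2\bI$ and taking the trace (as in \eqref{eq:var_transform_sp}) yields $\E[\|\bg_F^{{\rm ga}}-\nabla F_{\mu,\maN}\|^2]\le \tfrac{2d}{N}\bigl(\tfrac{M^2\mu^2(d+2)(d+4)}{4}+3\|\nabla F(\bx)\|^2\bigr)$, which after multiplying by $3$ produces the third and fourth terms of the claimed bound.

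For the remaining two terms I proceed as in the sphere proof. Splitting $\bg^{{\rm ga}}_m-\bg_F^{{\rm ga}}$ into forward and backward parts, expanding the squared norm, and noting that cross terms across distinct directions vanish by independence and zero conditional mean, Assumption~\ref{asm:F_sample_var_bound} together with the mini-batch reduction (Lemma~\ref{lem:minibatch_var_reduction}) bounds each inner square by $\sigma^2/m$; since $\E[\|\bu^i\|^2]=d$ this gives $\E[\|\bg^{{\rm ga}}_m-\bg_F^{{\rm ga}}\|^2]\le \tfrac{\sigma^2 d}{\mu^2 Nm}$ and hence the first term. For the bias, differentiating under the expectation in \eqref{def:F_mu_ga} gives $\nabla F_{\mu,\maN}(\bx)=\E_{\bu}[\nabla F(\bx+\mu\bu)]$, so by Jensen's inequality and the $M$-Lipschitzness of $\nabla F$ (Assumption~\ref{asm:F_smooth}), $\|\nabla F_{\mu,\maN}(\bx)-\nabla F(\bx)\|\le M\mu\,\E\|\bu\|\le M\mu\sqrt{d}$, i.e.\ $\|\nabla F_{\mu,\maN}-\nabla F\|^2\le M^2\mu^2 d$, which after the factor $3$ gives the second term. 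Summing the three pieces reproduces the stated inequality.

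The main obstacle is the Gaussian fourth-moment identity $\E[\|\bu\|^4\bu\bu^\top]=(d+2)(d+4)\bI$ and the bookkeeping of its dimensional factor. Unlike the sphere case, where $\|\bs\|=1$ keeps the directional moments small, a Gaussian direction has unbounded norm, so $\|\bu\|^4$ contributes the large $(d+2)(d+4)$ factor that dominates the variance term; verifying this identity (e.g.\ by reducing to the scalar moments $\E[u_i^6]=15$ and $\E[u_i^2u_j^2]=1$ via Isserlis' formula) and confirming that the smoothing bias scales only like $\sqrt{d}$ rather than the looser $d^{3/2}$ estimates sometimes quoted for Gaussian smoothing are the two places requiring care.
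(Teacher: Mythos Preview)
Your proposal is correct and follows essentially the same approach as the paper's proof: the same three-way decomposition into sampling error, direction variance, and smoothing bias, the same PSD bound on $\mathrm{Var}(\bg_F^{\rm ga})$ via the Gaussian moment identities (the paper's Lemma~\ref{lem:k_moment_uu_gauss_noise}), and the same trace argument. The only cosmetic difference is that for the bias term the paper invokes Lemma~\ref{lem:ga_smoothed_error} directly, whereas you rederive $\|\nabla F_{\mu,\maN}-\nabla F\|\le M\mu\sqrt{d}$ via Jensen and Lipschitzness of $\nabla F$; both yield the same bound.
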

\begin{proof}
We have 
\begin{align}
&{\rm Var}(\bg_F^{{\rm sp}}(\bx, S)) \nonumber\\
&= \frac{1}{N} {\rm Var}\left( \frac{F(\bx+\mu \bu)-F(\bx - \mu \bu)}{2\mu}\bu\right)
\nonumber \\
&= \frac{1}{N} \E_{\bu \sim \maN(0, I)}\left[\left(\frac{F(\bx+\mu \bu)-F(\bx - \mu \bu)}{2\mu}\bu\right) \left(\frac{F(\bx+\mu \bu)-F(\bx - \mu \bu)}{2\mu}\bu\right)^\top \right] 
\nonumber \\
& \quad - \frac{1}{N}\E_{\bu \sim \maN(0, I)}\left[\frac{F(\bx+\mu \bu)-F(\bx - \mu \bu)}{2\mu}\bu\right] \E_{\bu \sim \maN(0, I)}\left[\frac{F(\bx+\mu \bu)-F(\bx - \mu \bu)}{2\mu}\bu \right]^\top \nonumber \\
& \overset{(a)}{=} \frac{1}{N}\E_{\bu \sim \maN(0, I)}\left[ \left( \frac{F(\bx+\mu \bu)-F(\bx - \mu \bu)}{2\mu}\right)^2  \bu \bu^\top \right]- \frac{1}{N} \nabla F_{\mu,\maN}(\bx) \nabla F_{\mu,\maN}(\bx)^\top, \nonumber
\end{align}
where (a) comes from Lemma \ref{lem:estimator_gaussian_unbiased}.
Then, we have
\begin{align}
&{\rm Var}(\bg_F^{{\rm ga}}(\bx, U)) \nonumber \\
& \preceq  \frac{1}{N}\E_{\bu \sim \maN(0, I)}\left[ \left( \frac{F(\bx+\mu \bu)-F(\bx - \mu \bu)}{2\mu} \right)^2 \bu \bu^\top  \right] \nonumber \\
& =\frac{1}{N}\E_{\bu \sim \maN(0, I)}\left[ \left( \frac{F(\bx+\mu \bu)-F(\bx - \mu \bu) - 2\mu \nabla F(\bx)^\top \bu}{2\mu} + \nabla F(\bx)^\top \bu \right)^2 \bu \bu^\top  \right] \nonumber \\
& \preceq \frac{2}{N}\E_{\bu \sim \maN(0, I)}\left[ \left( \frac{F(\bx+\mu \bu)-F(\bx - \mu \bu) - 2\mu \nabla F(\bx)^\top \bu}{2\mu} \right)^2 \bu \bu^\top + (\nabla F(\bx)^\top \bu)^2 \bu \bu^\top \right] \nonumber \\
& =\frac{2}{N}\E_{\bu \sim \maN(0, I)}\bigg[ \bigg( \frac{F(\bx+\mu \bu)-F(\bx) - \mu \nabla F(\bx)^\top \bu}{2\mu} \nonumber \\
    &\hspace{30mm} - \frac{F(\bx-\mu \bu)-F(\bx) + \mu \nabla F(\bx)^\top \bu}{2\mu} \bigg)^2 \bu \bu^\top  + (\nabla F(\bx)^\top \bu)^2 \bu \bu^\top \bigg] \label{eq:co_var_progress} \\
&\overset{(a)}{\preceq}  \frac{2}{N}\E_{\bu \sim \maN(0, I)}\bigg[ \bigg( \frac{M\|\mu \bu\|^2}{4\mu} + \frac{M\|\mu \bu\|^2}{4\mu}\bigg)^2 \bu \bu^\top  + (\nabla F(\bx)^\top \bu)^2 \bu \bu^\top \bigg] \nonumber \\
& = \frac{2}{N}\E_{\bu \sim \maN(0, I)}\bigg[\frac{M^2\mu^2 \|\bu\|^4}{4}  \bu \bu^\top  + (\nabla F(\bx)^\top \bu)^2 \bu \bu^\top \bigg] \nonumber \\
& \overset{(b)}{=} \frac{2}{N}\left(\frac{M^2\mu^2}{4}(d+2)(d+4)I + \nabla F(\bx)^\top \nabla F(\bx) I  + 2 \nabla F(\bx) \nabla F(\bx)^\top \right) \nonumber \\
& \overset{(c)}{\preceq} \frac{2}{N}\left(\frac{M^2\mu^2}{4}(d+2)(d+4) + 3 \|\nabla F(\bx)\|^2\right) I, \label{eq:var_g_F_ga_progress}
\end{align}
where (a) is due to Assumption \ref{asm:F_smooth}; (b) follows from Lemma \ref{lem:k_moment_uu_gauss_noise};
(c) is due to the fact that $\nabla F(\bx) \nabla F(\bx)^\top \preceq \|\nabla F(\bx)\|^2I$ as confirmed in the proof of Lemma \ref{lem:g_variance_sp_batch1}.

Here, 
\begin{align}
\E_{U}[\|\bg_F^{{\rm ga}}(\bx,U) - \nabla F_{\mu,\maN}(\bx) \|^2] 
& = \E_{U}\left[\sum_{i=1}^d \big((\bg_F^{{\rm ga}}(\bx,U) - \nabla F_{\mu,\maN}(\bx))_i\big)^2\right] \nonumber \\
& = \E_{U}\left[ {\rm tr}\left( (\bg_F^{{\rm ga}}(\bx,U) - \nabla F_{\mu,\maN}(\bx))(\bg_F^{{\rm ga}}(\bx,U) - \nabla F_{\mu,\maN}(\bx))^\top\right)\right] \nonumber \\
& = {\rm tr}\left(\E_{U}\left[ (\bg_F^{{\rm ga}}(\bx,U) - \nabla F_{\mu,\maN}(\bx))(\bg_F^{{\rm ga}}(\bx,U) - \nabla F_{\mu,\maN}(\bx))^\top\right] \right)\nonumber \\
& \overset{(a)}{=} {\rm tr}({\rm Var}(\bg_F^{{\rm ga}}(\bx, U))), \label{eq:var_transform_ga}
\end{align}
where (a) comes from Lemma \ref{lem:estimator_gaussian_unbiased}.
Therefore, from \eqref{eq:var_g_F_ga_progress},
\begin{align}
\E_{U}[\|\bg_F^{{\rm ga}}(\bx,U) - \nabla F_{\mu,\maN}(\bx) \|^2] & \le \frac{2d}{N}\left(\frac{M^2\mu^2}{4}(d+2)(d+4) + 3 \|\nabla F(\bx)\|^2 \right).\label{eq:g_ga_F_variance_1}
\end{align}

Moreover,
\begin{align}
& \E_{U, \{\Xi_{1}^{(j)}\}_{j=1}^m, \{\Xi_{2}^{(j)}\}_{j=1}^m} \left[ \left\| \bg^{\rm{ga}}_m(\bx,U, \{\Xi_{1}^{(j)}\}_{j=1}^m, \{\Xi_{2}^{(j)}\}_{j=1}^m)  -\bg_F^{\rm{ga}}(\bx,U) \right\|^2  \right] \nonumber \\
& = \E_{U, \{\Xi_{1}^{(j)}\}_{j=1}^m, \{\Xi_{2}^{(j)}\}_{j=1}^m} \bigg[ \bigg\|\frac{1}{N}\sum_{i=1}^{N}\frac{\frac{1}{m}\sum_{j=1}^{m} f(\bx+\mu \bu^i, \bmxi^{1,i,(j)}) - \frac{1}{m}\sum_{j=1}^{m} f(\bx-\mu \bu^i, \bmxi^{2,i,(j)})}{2\mu}\bu^i \nonumber\\
    &\hspace{40mm} - \frac{1}{N} \sum_{i=1}^{N} \frac{F(\bx+\mu \bu^i)-F(\bx - \mu \bu^i)}{2\mu}\bu^i \bigg\|^2  \bigg] \nonumber\\
& = \E_{U, \{\Xi_{1}^{(j)}\}_{j=1}^m, \{\Xi_{2}^{(j)}\}_{j=1}^m} \bigg[ \bigg\|\frac{1}{N}\sum_{i=1}^{N}\frac{\frac{1}{m}\sum_{j=1}^{m} f(\bx+\mu \bu^i, \bmxi^{1,i,(j)}) - F(\bx+\mu \bu^i)}{2\mu}\bu^i \nonumber\\
    &\hspace{40mm} - \frac{1}{N} \sum_{i=1}^{N} \frac{\frac{1}{m}\sum_{j=1}^{m} f(\bx-\mu \bu^i, \bmxi^{2,i,(j)})-F(\bx - \mu \bu^i)}{2\mu}\bu^i \bigg\|^2  \bigg] \nonumber\\
& \le 2\E_{U,\{\Xi_{1}^{(j)}\}_{j=1}^m, \{\Xi_{2}^{(j)}\}_{j=1}^m} \Bigg[ \left\| \frac{1}{N}\sum_{i=1}^{N}\frac{\frac{1}{m}\sum_{j=1}^{m} f(\bx+\mu \bu^i, \bmxi^{1,i,(j)}) - F(\bx+\mu \bu^i)}{2\mu}\bu^i\right\|^2 \nonumber\\
    &\hspace{40mm} + \left\|\frac{1}{N} \sum_{i=1}^{N} \frac{\frac{1}{m}\sum_{j=1}^{m} f(\bx-\mu \bu^i, \bmxi^{2,i,(j)})-F(\bx - \mu \bu^i)}{2\mu}\bu^i \right\|^2  \Bigg] \nonumber\\
& = \frac{1}{2\mu^2N^2} \E_{U} \Bigg[ \E_{\{\Xi_{1}^{(j)}\}_{j=1}^m} \Bigg[ \Bigg\| \sum_{i=1}^{N}\left(\frac{1}{m}\sum_{j=1}^{m} f(\bx+\mu \bu^i, \bmxi^{1,i,(j)}) - F(\bx+\mu \bu^i)\right)\bu^i\Bigg\|^2 \Bigg]  \nonumber\\
    &\hspace{20mm} + \E_{\{\Xi_{2}^{(j)}\}_{j=1}^m} \Bigg[ \Bigg\|\sum_{i=1}^{N} \left(\frac{1}{m}\sum_{j=1}^{m} f(\bx-\mu \bu^i, \bmxi^{2,i,(j)}) - F(\bx-\mu \bu^i)\right)\bu^i \Bigg\|^2  \Bigg]  \Bigg]. \nonumber    
\end{align}
Here, for $i\neq k$, 
$$\E_{\bmxi^{1,i,(j)},\bmxi^{1,k,(\ell)}}\left[ \left(f(\bx+\mu \bu^i, \bmxi^{1,i,(j)}) - F(\bx+\mu \bu^i)\right)\left(f(\bx+\mu \bu^k, \bmxi^{1,k,(\ell)}) - F(\bx+\mu \bu^k)\right)\bu^{i\top} \bu^k\right]=0, \ {\rm and}$$
$$\E_{\bmxi^{2,i,(j)},\bmxi^{2,k,(\ell)}}\left[ \left(f(\bx-\mu \bu^i, \bmxi^{2,i,(j)}) - F(\bx-\mu \bu^i)\right)\left(f(\bx-\mu \bu^k, \bmxi^{2,k,(\ell)}) - F(\bx-\mu \bu^k)\right)\bu^{i\top} \bu^k\right]=0.$$
Therefore,
\begin{align}
& \E_{U, \{\Xi_{1}^{(j)}\}_{j=1}^m, \{\Xi_{2}^{(j)}\}_{j=1}^m} \left[ \left\| \bg^{\rm{ga}}_m(\bx,U, \{\Xi_{1}^{(j)}\}_{j=1}^m, \{\Xi_{2}^{(j)}\}_{j=1}^m)  -\bg_F^{\rm{ga}}(\bx,U) \right\|^2  \right] \nonumber \\
&=\frac{1}{2\mu^2N^2} \E_{U} \Bigg[ \E_{\{\Xi_{1}^{(j)}\}_{j=1}^m} \Bigg[ \sum_{i=1}^{N} \left\| \left(\frac{1}{m}\sum_{j=1}^{m} f(\bx+\mu \bu^i, \bmxi^{1,i,(j)}) - F(\bx+\mu \bu^i)\right)\bu^i\right\|^2 \Bigg]  \nonumber\\
    &\hspace{20mm} + \E_{\{\Xi_{2}^{(j)}\}_{j=1}^m} \Bigg[ \sum_{i=1}^{N} \left\| \left(\frac{1}{m}\sum_{j=1}^{m} f(\bx-\mu \bu^i, \bmxi^{2,i,(j)}) - F(\bx-\mu \bu^i)\right)\bu^i \right\|^2  \Bigg]  \Bigg] \nonumber \\
& \overset{(a)}{\le} \frac{1}{2\mu^2N^2} \E_{U} \left[\sum_{i=1}^N \frac{\sigma^2}{m} \| \bu^i\|^2 + \sum_{i=1}^N \frac{\sigma^2}{m}\| \bu^i\|^2 \right] \nonumber \\
& \overset{(b)}{=} \frac{\sigma^2d}{\mu^2 Nm}, \label{eq:var_partial}
\end{align}
where (a) is due to Assumption \ref{asm:F_sample_var_bound} and Lemma \ref{lem:minibatch_var_reduction};
(b) comes from Lemma \ref{lem:moment_gauss_noise}.

Then,
\begin{align*}
& \E_{U, \{\Xi_{1}^{(j)}\}_{j=1}^m, \{\Xi_{2}^{(j)}\}_{j=1}^m} \left[ \left\| \bg^{\rm{ga}}_m(\bx,U, \{\Xi_{1}^{(j)}\}_{j=1}^m, \{\Xi_{2}^{(j)}\}_{j=1}^m)  - \nabla F(\bx)\right\|^2  \right] \\
& =\E_{U, \{\Xi_{1}^{(j)}\}_{j=1}^m, \{\Xi_{2}^{(j)}\}_{j=1}^m} [\| \bg^{\rm{ga}}_m(\bx,U, \{\Xi_{1}^{(j)}\}_{j=1}^m, \{\Xi_{2}^{(j)}\}_{j=1}^m) - \nabla F(\bx) \\
& \hspace{40mm} -\bg_F^{\rm{ga}}(\bx,U) + \bg_F^{\rm{ga}}(\bx,U)  - \nabla F_{\mu, \maN}(\bx) + \nabla F_{\mu, \maN}(\bx)  \|^2 ] \\
& \le 3 \E_{U, \{\Xi_{1}^{(j)}\}_{j=1}^m, \{\Xi_{2}^{(j)}\}_{j=1}^m} \big[ \| \bg^{\rm{ga}}_m(\bx,U, \{\Xi_{1}^{(j)}\}_{j=1}^m, \{\Xi_{2}^{(j)}\}_{j=1}^m) -\bg_F^{\rm{ga}}(\bx,U) \|^2 + \|\nabla F_{\mu, \maN}(\bx)  - \nabla F(\bx)\|^2  \\
&\hspace{40mm}  + \| \bg_F^{\rm{ga}}(\bx,U)  - \nabla F_{\mu, \maN}(\bx)\|^2 \big] \\ 
& \overset{(a)}{\le} \frac{3\sigma^2d}{\mu^2 Nm}+ 3\mu^2 M^2 d + \frac{6d}{N}\left(\frac{M^2\mu^2}{4}(d+2)(d+4) + 3 \|\nabla F(\bx)\|^2\right),
\end{align*}
\normalsize
where (a) follows from \eqref{eq:g_ga_F_variance_1}, \eqref{eq:var_partial}, and Lemma \ref{lem:ga_smoothed_error}.
\end{proof}

\begin{lemma}
\label{lem:g_variance_Ga_batch1_H_smooth}
Suppose that Assumptions \ref{asm:F_sample_var_bound} and \ref{asp:F_hessian_lipschitz} hold.
Then, the following holds.
\begin{align*}
&\E_{U,\{\Xi_{1}^{(j)}\}_{j=1}^m, \{\Xi_{2}^{(j)}\}_{j=1}^m} \left[ \left\| \bg^{\rm{ga}}_m(\bx,U, \{\Xi_{1}^{(j)}\}_{j=1}^m, \{\Xi_{2}^{(j)}\}_{j=1}^m)  - \nabla F(\bx) \right\|^2  \right] \\
&\le\frac{3\sigma^2d}{\mu^2 Nm}  + 3\mu^4 H^2 d^2 + \frac{H^2\mu^4}{6N}d(d+2)(d+4)(d+6)  + \frac{18d}{N} \|\nabla F(\bx)\|^2.
\end{align*}
\end{lemma}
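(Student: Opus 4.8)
The plan is to mirror the proof of Lemma~\ref{lem:g_variance_Ga_batch1}, replacing the second-order Taylor bound coming from $M$-smoothness with the third-order bound afforded by Assumption~\ref{asp:F_hessian_lipschitz}, exactly as Lemma~\ref{lem:g_variance_sp_batch1_H_smooth} upgrades Lemma~\ref{lem:g_variance_sp_batch1} in the sphere case. Concretely, I would decompose
\[
\bg^{\rm ga}_m(\bx,U,\cdot) - \nabla F(\bx)
= \big(\bg^{\rm ga}_m - \bg_F^{\rm ga}\big) + \big(\bg_F^{\rm ga} - \nabla F_{\mu,\maN}(\bx)\big) + \big(\nabla F_{\mu,\maN}(\bx) - \nabla F(\bx)\big),
\]
apply $\|\ba+\bb+\bc\|^2 \le 3(\|\ba\|^2+\|\bb\|^2+\|\bc\|^2)$, and bound the finite-sample error, the directional variance, and the smoothing bias separately.

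First I would re-derive the variance of $\bg_F^{\rm ga}(\bx,U)$ following \eqref{eq:co_var_progress} verbatim up to the point where the two symmetric second-order remainders appear. Inside each forward/backward remainder I insert and subtract the common curvature term $\frac{\mu^2}{2}\bu^\top\nabla^2 F(\bx)\bu$, which cancels in the difference; Assumption~\ref{asp:F_hessian_lipschitz} then bounds each remainder by $\frac{H\|\mu\bu\|^3}{12\mu}=\frac{H\mu^2\|\bu\|^3}{12}$, so the squared bracket is at most $\frac{H^2\mu^4\|\bu\|^6}{36}$. This yields
\[
{\rm Var}(\bg_F^{\rm ga}(\bx,U)) \preceq \frac{2}{N}\,\E_{\bu\sim\maN(0,I)}\!\left[\frac{H^2\mu^4\|\bu\|^6}{36}\,\bu\bu^\top + (\nabla F(\bx)^\top\bu)^2\,\bu\bu^\top\right].
\]
The one genuinely new computation is the sixth-order Gaussian moment $\E_{\bu\sim\maN(0,I)}[\|\bu\|^6\bu\bu^\top]$. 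By rotational symmetry this equals $c\,I$ with $c=\frac{1}{d}\E[\|\bu\|^8]=(d+2)(d+4)(d+6)$, using the chi-square identity $\E[\|\bu\|^{2n}]=d(d+2)\cdots(d+2n-2)$; I expect Lemma~\ref{lem:k_moment_uu_gauss_noise} to supply this as the next case beyond the $\E[\|\bu\|^4\bu\bu^\top]=(d+2)(d+4)I$ already invoked. Combined with $\E[(\nabla F(\bx)^\top\bu)^2\bu\bu^\top]=\|\nabla F(\bx)\|^2 I+2\nabla F(\bx)\nabla F(\bx)^\top$ and $\nabla F(\bx)\nabla F(\bx)^\top\preceq\|\nabla F(\bx)\|^2 I$, taking the trace exactly as in \eqref{eq:var_transform_ga} gives $\E_U\|\bg_F^{\rm ga}-\nabla F_{\mu,\maN}\|^2\le \frac{2d}{N}\big(\frac{H^2\mu^4}{36}(d+2)(d+4)(d+6)+3\|\nabla F(\bx)\|^2\big)$.

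The remaining two terms are inherited essentially unchanged. The finite-sample error $\E\|\bg^{\rm ga}_m-\bg_F^{\rm ga}\|^2$ is identical to \eqref{eq:var_partial}, giving $\frac{\sigma^2 d}{\mu^2 Nm}$, since it uses only Assumption~\ref{asm:F_sample_var_bound} together with $\E\|\bu\|^2=d$ and the cross-term vanishing for $i\neq k$. The smoothing bias $\|\nabla F_{\mu,\maN}(\bx)-\nabla F(\bx)\|^2$ I would bound by the Hessian-Lipschitz branch of Lemma~\ref{lem:ga_smoothed_error}, yielding $\mu^4 H^2 d^2$ (the analogue of the $\mu^2 M^2 d$ bound used under $M$-smoothness). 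Substituting the three bounds into the $3(\cdot+\cdot+\cdot)$ decomposition reproduces the four claimed terms exactly: $\frac{3\sigma^2 d}{\mu^2 Nm}$, $3\mu^4 H^2 d^2$, $\frac{H^2\mu^4 d}{6N}(d+2)(d+4)(d+6)$, and $\frac{18d}{N}\|\nabla F(\bx)\|^2$. The only step requiring care beyond transcription is the sixth-moment evaluation, and even that is routine once the chi-square moment formula is in hand; hence I anticipate no substantive obstacle, provided Lemmas~\ref{lem:k_moment_uu_gauss_noise} and~\ref{lem:ga_smoothed_error} are stated to cover this higher-order case.
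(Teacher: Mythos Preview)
Your proposal is correct and follows essentially the same approach as the paper's own proof: the same three-term decomposition, the same insertion of the curvature term $\frac{\mu^2}{2}\bu^\top\nabla^2 F(\bx)\bu$ to exploit Assumption~\ref{asp:F_hessian_lipschitz}, the same moment identities from Lemma~\ref{lem:k_moment_uu_gauss_noise} (which indeed covers $k=6$), reuse of \eqref{eq:var_partial} for the finite-sample term, and the Hessian-Lipschitz branch of Lemma~\ref{lem:ga_smoothed_error} for the bias. No gaps and no meaningful deviation from the paper's argument.
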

\begin{proof}
As in the derivation of \eqref{eq:co_var_progress} in the proof of Lemma \ref{lem:g_variance_Ga_batch1}, we obtain 
\begin{align*}
&{\rm Var}(\bg_F^{{\rm ga}}(\bx, U)) \\
& =\frac{2}{N}\E_{\bu \sim \maN(0, I)}\bigg[ \bigg( \frac{F(\bx+\mu \bu)-F(\bx) - \mu \nabla F(\bx)^\top \bu}{2\mu} \\
    &\hspace{30mm} - \frac{F(\bx-\mu \bu)-F(\bx) + \mu \nabla F(\bx)^\top \bu}{2\mu} \bigg)^2 \bu \bu^\top  + (\nabla F(\bx)^\top \bu)^2 \bu \bu^\top \bigg]\\
& =\frac{2}{N}\E_{\bu \sim \maN(0, I)}\bigg[ \bigg( \frac{F(\bx+\mu \bu)-F(\bx) - \mu \nabla F(\bx)^\top \bu - \frac{\mu^2}{2}\bu^\top \nabla^2F(\bx) \bu}{2\mu} \\
    &\hspace{30mm} - \frac{F(\bx-\mu \bu)-F(\bx) + \mu \nabla F(\bx)^\top \bu - \frac{\mu^2}{2}\bu^\top \nabla^2F(\bx) \bu}{2\mu} \bigg)^2 \bu \bu^\top  + (\nabla F(\bx)^\top \bu)^2 \bu \bu^\top \bigg].
\end{align*}
Then,
\begin{align*}
{\rm Var}(\bg_F^{{\rm ga}}(\bx, U)) 
& \overset{(a)}{\preceq}  \frac{2}{N}\E_{\bu \sim \maN(0, I)}\bigg[ \bigg( \frac{H\|\mu \bu\|^3}{12\mu} + \frac{H\|\mu \bu\|^3}{12\mu}\bigg)^2 \bu \bu^\top  + (\nabla F(\bx)^\top \bu)^2 \bu \bu^\top \bigg] \\
& = \frac{2}{N}\E_{\bu \sim \maN(0, I)}\bigg[ \bigg( \frac{H^2\mu^4 \|\bu\|^6}{36} \bigg) \bu \bu^\top  + (\nabla F(\bx)^\top \bu)^2 \bu \bu^\top \bigg] \\
& \overset{(b)}{=} \frac{2}{N}\left(\frac{H^2\mu^4}{36}(d+2)(d+4)(d+6)I + \nabla F(\bx)^\top \nabla F(\bx) I + 2 \nabla F(\bx) \nabla F(\bx)^\top\right)  \\
& \overset{(c)}{\preceq} \frac{2}{N}\left(\frac{H^2\mu^4}{36}(d+2)(d+4)(d+6)  + 3 \|\nabla F(\bx)\|^2\right) I,
\end{align*}
where (a) is due to Assumption \ref{asp:F_hessian_lipschitz}, and (b) follows from Lemma \ref{lem:k_moment_uu_gauss_noise};
(c) is due to the fact that $\nabla F(\bx) \nabla F(\bx)^\top \preceq \|\nabla F(\bx)\|^2I$ as confirmed in the proof of Lemma \ref{lem:g_variance_sp_batch1}.
Then, 
\begin{align}
\E_U[\|\bg_F^{{\rm ga}}(\bx,U) - \nabla F_{\mu, \maN}(\bx) \|^2]
&\overset{(a)}{=} {\rm tr}({\rm Var}(\bg_F^{{\rm ga}}(\bx, U))) 
\le \frac{2d}{N}\left(\frac{H^2\mu^4}{36}(d+2)(d+4)(d+6)  + 3 \|\nabla F(\bx)\|^2\right), \label{eq:g_ga_F_variance}
\end{align}
where (a) holds as confirmed in the proof of \eqref{eq:var_transform_ga} in Lemma \ref{lem:g_variance_Ga_batch1}.

Moreover, similar to \eqref{eq:var_partial} in the proof of Lemma \ref{lem:g_variance_Ga_batch1},
\begin{align}
& \E_{U, \{\Xi_{1}^{(j)}\}_{j=1}^m, \{\Xi_{2}^{(j)}\}_{j=1}^m} \left[ \left\| \bg^{\rm{ga}}_m(\bx,U, \{\Xi_{1}^{(j)}\}_{j=1}^m, \{\Xi_{2}^{(j)}\}_{j=1}^m)  -\bg_F^{\rm{ga}}(\bx,U) \right\|^2  \right] \le \frac{\sigma^2d}{\mu^2 Nm}, \label{eq:var_partial_2}
\end{align}

Then, 
\begin{align*}
& \E_{U, \{\Xi_{1}^{(j)}\}_{j=1}^m, \{\Xi_{2}^{(j)}\}_{j=1}^m} \left[ \left\| \bg^{\rm{ga}}_m(\bx,U, \{\Xi_{1}^{(j)}\}_{j=1}^m, \{\Xi_{2}^{(j)}\}_{j=1}^m)  - \nabla F(\bx)\right\|^2  \right] \\
& =\E_{U, \{\Xi_{1}^{(j)}\}_{j=1}^m, \{\Xi_{2}^{(j)}\}_{j=1}^m} \big[ \big\| \bg^{\rm{ga}}_m(\bx,U, \{\Xi_{1}^{(j)}\}_{j=1}^m, \{\Xi_{2}^{(j)}\}_{j=1}^m)  - \nabla F(\bx) \nonumber \\
&\hspace{40mm} -\bg_F^{\rm{ga}}(\bx,U) + \bg_F^{\rm{ga}}(\bx,U)  - \nabla F_{\mu, \maN}(\bx) + \nabla F_{\mu, \maN}(\bx)\big\|^2  \big] \\
& \le 3 \E_{U, \{\Xi_{1}^{(j)}\}_{j=1}^m, \{\Xi_{2}^{(j)}\}_{j=1}^m} \big[ \| \bg^{\rm{ga}}_m(\bx,U, \{\Xi_{1}^{(j)}\}_{j=1}^m, \{\Xi_{2}^{(j)}\}_{j=1}^m) -\bg_F^{\rm{ga}}(\bx,U) \|^2 + \|\nabla F_{\mu, \maN}(\bx)  - \nabla F(\bx)\|^2   \\
& \hspace{40mm} + \| \bg_F^{\rm{ga}}(\bx,U)  - \nabla F_{\mu, \maN}(\bx)\|^2\big] \\ 
& \overset{(a)}{\le} \frac{3\sigma^2d}{\mu^2 Nm}  + 3\mu^4 H^2 d^2 + \frac{H^2\mu^4}{6N}d(d+2)(d+4)(d+6)  + \frac{18d}{N} \|\nabla F(\bx)\|^2,
\end{align*}
\normalsize
where (a) follows from \eqref{eq:g_ga_F_variance}, \eqref{eq:var_partial_2}, and Lemma \ref{lem:ga_smoothed_error}.
\end{proof}

Then, we show the sample complexity for Algorithm \ref{alg:simple} with $\bg^{\rm{ga}}_m$.
\begin{theorem} \label{thm:ga_convergence_grad_lip}
Suppose that $\epsilon\le \frac{1}{3}$, and Assumptions \ref{asm:F_sample_var_bound} and \ref{asm:F_smooth} hold.
Let $\bg_t:=\bg^{\rm{ga}}_m(\bx_t, U,\{\Xi_1^{(j)}\}_{j=1}^m, \{\Xi_2^{(j)}\}_{j=1}^m)$, where $\bg^{\rm{ga}}_m$ is defined by 
\eqref{eq:mini_batch_gradient_est_ga}.
Let $\mu:=\Theta(\epsilon d^{-\frac{1}{2}})$, $N:=d^2 \epsilon^{-4}$, $\eta\le \frac{1}{4M}$, $m:=1$, and $T:=\Theta(\epsilon^{-2})$.
Then, the sample complexity, to satisfy $\E[\|\nabla F(\bar{\bx})\|^2]\le \epsilon^2$ for output $\bar{\bx}$ of Algorithm \ref{alg:simple}, is $O(d^2\epsilon^{-6})$.
\end{theorem}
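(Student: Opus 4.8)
The plan is to mirror the proof of Theorem~\ref{thm:sp_sample_complexity} for the sphere estimator, since the Gaussian case shares the same overall structure. First I would invoke Lemma~\ref{lem:simple_descent}, which (under Assumption~\ref{asm:F_smooth} and $\eta\le\frac{1}{4M}$) bounds the expected squared gradient norm of the output by a descent term plus three times the average estimation error $\frac{1}{T+1}\sum_{t=0}^T\E_{\bg_{[t]}}[\|\nabla F(\bx_t)-\bg_t\|^2]$. The descent term is $\frac{4(F(\bx_0)-F^*)}{\eta(T+1)}$, which with $T=\Theta(\epsilon^{-2})$ is immediately $O(\epsilon^2)$.

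Next I would substitute the bound from Lemma~\ref{lem:g_variance_Ga_batch1} for the estimation error. That bound has four terms. Plugging in $\mu=\Theta(\epsilon d^{-1/2})$, $N=d^2\epsilon^{-4}$, and $m=1$, a direct calculation shows the first term $\frac{3\sigma^2 d}{\mu^2 Nm}$ and the second term $3\mu^2 M^2 d$ are each $\Theta(\epsilon^2)$, while the third term $\frac{3dM^2\mu^2}{2N}(d+2)(d+4)$ is $\Theta(\epsilon^6)$; since $\epsilon\le\frac13$, all three are $O(\epsilon^2)$.

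The crucial step concerns the fourth term, $\frac{18d}{N}\|\nabla F(\bx_t)\|^2$, which---unlike the first three---depends on the random iterate through $\|\nabla F(\bx_t)\|^2$. After the factor of $3$ from Lemma~\ref{lem:simple_descent} and averaging over $t=0,\dots,T$, this contributes $\frac{54d}{N}\cdot\frac{1}{T+1}\sum_{t=0}^T\E_{\bg_{[t]}}[\|\nabla F(\bx_t)\|^2]$. With $N=d^2\epsilon^{-4}$ one has $\frac{54d}{N}=\frac{54\epsilon^4}{d}\le\frac{2}{3}$ because $\epsilon\le\frac13$ and $d\ge1$. Recognizing that $\frac{1}{T+1}\sum_{t=0}^T\E_{\bg_{[t]}}[\|\nabla F(\bx_t)\|^2]=\E[\|\nabla F(\bar{\bx})\|^2]$ by the uniform-random choice of $\bar{\bx}$ in Algorithm~\ref{alg:simple}, I would collect this self-referential term on the left, obtaining $(1-\frac23)\E[\|\nabla F(\bar{\bx})\|^2]=O(\epsilon^2)$, hence $\E[\|\nabla F(\bar{\bx})\|^2]=O(\epsilon^2)$.

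Finally, the sample complexity is $O(NmT)=O(d^2\epsilon^{-4}\cdot 1\cdot\epsilon^{-2})=O(d^2\epsilon^{-6})$, since each iteration draws $N$ directions with $m$ samples at each of the two perturbed points. The main obstacle I anticipate is purely bookkeeping: keeping the dimension dependence straight across the four terms (in particular verifying that the third term's $(d+2)(d+4)$ growth is exactly cancelled by $N^{-1}=d^{-2}\epsilon^4$) and confirming that the gradient-dependent fourth term can indeed be absorbed, which hinges on the specific scaling $\mu=\Theta(\epsilon d^{-1/2})$ chosen to balance the $d$-dependence appearing in the Gaussian moment bounds of Lemma~\ref{lem:g_variance_Ga_batch1}.
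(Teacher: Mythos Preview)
Your proposal is correct and follows essentially the same approach as the paper: invoke Lemma~\ref{lem:simple_descent}, substitute the four-term bound of Lemma~\ref{lem:g_variance_Ga_batch1}, verify that the first three terms are $O(\epsilon^2)$ under the stated parameter choices, and absorb the self-referential $\frac{54d}{N}\cdot\frac{1}{T+1}\sum_t\E[\|\nabla F(\bx_t)\|^2]$ term on the left using $\frac{54\epsilon^4}{d}\le\frac{2}{3}$. The paper's proof is line-for-line the same argument.
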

\begin{proof}
We have
\small
\begin{align*}
&\E[\|\nabla F(\bar{\bx})\|^2] \\
& \overset{(a)}{\le} \frac{4(F(\bx_0) - F^*)}{\eta (T+1)}  + \frac{3}{T+1} \sum_{t=0}^T\E_{\bg_{[t]}}[\|\nabla F(\bx_t)-\bg_t\|^2] \\
& \overset{(b)}{\le} \frac{4(F(\bx_0) - F^*)}{\eta (T+1)}  + \frac{9\sigma^2d}{\mu^2 N m}+ 9\mu^2 M^2 d + 
\frac{9dM^2\mu^2}{2N}(d+2)(d+4)
+\frac{54d}{N(T+1)} \sum_{t=0}^T \E_{\bg_{[t]}} [\|\nabla F(\bx_t)\|^2] \\
&\overset{(c)}{=} O(\epsilon^2) + 54\epsilon^4 d^{-1} \frac{1}{T+1} \sum_{t=0}^T\E_{\bg_{[t]}} [\|\nabla F(\bx_t)\|^2]\\
&\overset{(d)}{\le} O(\epsilon^2) + \frac{2}{3} \frac{1}{T+1} \sum_{t=0}^T\E_{\bg_{[t]}} [\|\nabla F(\bx_t)\|^2],
\end{align*}
\normalsize
where (a) comes from Lemma \ref{lem:simple_descent}; 
(b) follows from Lemma \ref{lem:g_variance_Ga_batch1}; 
(c) is due to the facts that $T=\Theta(\epsilon^{-2})$, $N=d^2 \epsilon^{-4}$, $\mu:=\Theta(\epsilon d^{-\frac{1}{2}})$, and $m=1$;
(d) is due to $d\ge 1$ and $\epsilon\le \frac{1}{3}$.

Since $\E[\|\nabla F(\bar{\bx})\|^2]=\frac{1}{T+1} \sum_{t=0}^T\E_{\bg_{[t]}} [\|\nabla F(\bx_t)\|^2]$,
\begin{align*}
&\left(1- \frac{2}{3}\right)\E[\|\nabla F(\bar{\bx})\|^2] = O(\epsilon^2). 
\end{align*}
Therefore,
\begin{align*}    
&\E[\|\nabla F(\bar{\bx})\|^2] = 3 O(\epsilon^2)=O(\epsilon^2).
\end{align*}
Therefore, the sample complexity is $O(NmT)=O(d^2\epsilon^{-6})$.
\end{proof}

\begin{remark}
\label{rem:N} 
Theorem~\ref{thm:ga_convergence_grad_lip} achieves lower sample complexity than \citep{hikima2025zeroth,hikima2025guided}.
This improvement is due to two factors:
(i) whereas prior work analyzed the case $N=1$ (with $m>1$), we analyze $N>1$ (with $m=1$); by Lemmas~\ref{lem:g_variance_Ga_batch1} and~\ref{lem:g_variance_Ga_batch1_H_smooth}, the upper bounds on the error of the gradient estimator for the true gradient decrease more efficiently by increasing $N$ than by increasing $m$;
(ii) for the bias term $\|\nabla F_{\mu,\maN}(\bx)-\nabla F(\bx)\|^2$, prior analyses used the bound of \citep[Lemma~3]{nesterov2017random}, whereas we employ the tighter bound of Lemma \ref{lem:ga_smoothed_error} \citep[Eq.~(2.11) with $e_f=0$]{berahas2022theoretical}.\footnote{The bound was stated earlier in \citep{maggiar2018derivative}, without proof.}
\end{remark}

\begin{theorem} \label{thm:ga_convergence_Hess_lip}
Suppose that $\epsilon\le \frac{1}{4}$, Assumptions \ref{asm:F_sample_var_bound}--\ref{asp:F_hessian_lipschitz} hold.
Let $\bg_t:=\bg^{\rm{ga}}_m(\bx_t, U,\{\Xi_1^{(j)}\}_{j=1}^m, \{\Xi_2^{(j)}\}_{j=1}^m)$, where $\bg^{\rm{ga}}_m$ is defined by 
\eqref{eq:mini_batch_gradient_est_ga}.
Let $\mu:=\Theta(\epsilon^{\frac{1}{2}} d^{-\frac{1}{2}})$, $N:=d^2 \epsilon^{-3}$, $m=1$, $\eta\le \frac{1}{4M}$, and $T:=\Theta(\epsilon^{-2})$.
Then, the sample complexity, to satisfy $\E[\|\nabla F(\bar{\bx})\|^2]\le \epsilon^2$ for output $\bar{\bx}$ of Algorithm \ref{alg:simple}, is $O(d^2\epsilon^{-5})$.
\end{theorem}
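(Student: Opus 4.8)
The plan is to reproduce the argument of Theorem~\ref{thm:ga_convergence_grad_lip} almost verbatim, replacing the first-order estimator bound of Lemma~\ref{lem:g_variance_Ga_batch1} with the Hessian-Lipschitz bound of Lemma~\ref{lem:g_variance_Ga_batch1_H_smooth}. First I would apply the descent inequality of Lemma~\ref{lem:simple_descent}, which is available because $\eta\le\frac{1}{4M}$ and Assumption~\ref{asm:F_smooth} is among the hypotheses, to obtain
\[
\E[\|\nabla F(\bar{\bx})\|^2] \le \frac{4(F(\bx_0)-F^*)}{\eta(T+1)} + \frac{3}{T+1}\sum_{t=0}^T \E_{\bg_{[t]}}[\|\nabla F(\bx_t)-\bg_t\|^2],
\]
and then substitute the four-term bound of Lemma~\ref{lem:g_variance_Ga_batch1_H_smooth} for each summand.

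Next comes the parameter bookkeeping. With $m=1$, $N=d^2\epsilon^{-3}$, and $\mu=\Theta(\epsilon^{1/2}d^{-1/2})$ (so $\mu^2=\Theta(\epsilon/d)$ and $\mu^4=\Theta(\epsilon^2/d^2)$), each of the first three terms collapses to $O(\epsilon^2)$: the finite-sample term $\frac{3\sigma^2 d}{\mu^2 N}$ becomes $\Theta(\epsilon^2)$; the smoothing-bias term $3\mu^4 H^2 d^2$ becomes $\Theta(\epsilon^2)$ (the $d^{-1/2}$ scaling built into $\mu$ is chosen precisely to cancel the $d^2$); and the direction-variance term $\frac{H^2\mu^4}{6N}d(d+2)(d+4)(d+6)$ is $\Theta(\epsilon^5)=O(\epsilon^2)$. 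The optimization error $\frac{4(F(\bx_0)-F^*)}{\eta(T+1)}$ is $O(\epsilon^2)$ since $T=\Theta(\epsilon^{-2})$. The only surviving contribution is the gradient-dependent fourth term, which after the factor $3$ from Lemma~\ref{lem:simple_descent} equals $\frac{54d}{N}\cdot\frac{1}{T+1}\sum_{t=0}^T\E[\|\nabla F(\bx_t)\|^2]=\frac{54\epsilon^3}{d}\cdot\frac{1}{T+1}\sum_{t=0}^T\E[\|\nabla F(\bx_t)\|^2]$.

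The decisive step is the self-bounding argument. Since $\bar{\bx}$ is drawn uniformly from $\{\bx_t\}_{t=0}^T$, we have $\frac{1}{T+1}\sum_{t=0}^T\E[\|\nabla F(\bx_t)\|^2]=\E[\|\nabla F(\bar{\bx})\|^2]$, so the surviving term is $\frac{54\epsilon^3}{d}\E[\|\nabla F(\bar{\bx})\|^2]$ and can be transferred to the left-hand side. This is the one place where the hypothesis $\epsilon\le\frac14$ is essential: it guarantees $\frac{54\epsilon^3}{d}\le 54\epsilon^3\le\frac{27}{32}<1$ (using $d\ge1$), so $1-\frac{54\epsilon^3}{d}$ is bounded below by the positive constant $\frac{5}{32}$, and dividing through preserves the $O(\epsilon^2)$ bound. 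The threshold differs from the $\epsilon\le\frac13$ of Theorem~\ref{thm:ga_convergence_grad_lip} only because the direction-variance coefficient now scales as $\epsilon^3$ rather than $\epsilon^4$. I expect this absorption inequality, namely verifying that the coefficient is a strict contraction, to be the only nonroutine point; everything else is the Landau bookkeeping described above. Finally I would count samples: each of the $N$ directions consumes $O(1)$ samples, $m=1$, and there are $T$ iterations, so the total sample complexity is $O(NmT)=O(d^2\epsilon^{-3}\cdot\epsilon^{-2})=O(d^2\epsilon^{-5})$.
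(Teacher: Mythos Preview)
Your proposal is correct and follows essentially the same approach as the paper's own proof: apply Lemma~\ref{lem:simple_descent}, substitute the bound of Lemma~\ref{lem:g_variance_Ga_batch1_H_smooth}, verify that the first three terms are $O(\epsilon^2)$ under the stated parameter choices, and absorb the gradient-dependent term via the self-bounding identity $\E[\|\nabla F(\bar{\bx})\|^2]=\frac{1}{T+1}\sum_t\E[\|\nabla F(\bx_t)\|^2]$ using $\frac{54\epsilon^3}{d}\le\frac{27}{32}$. The constants and the sample count $O(NmT)=O(d^2\epsilon^{-5})$ match the paper exactly.
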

\begin{proof}
We have
\small
\begin{align*}
&\E[\|\nabla F(\bar{\bx})\|^2] \\
& \overset{(a)}{\le} \frac{4(F(\bx_0) - F^*)}{\eta (T+1)}  + \frac{3}{T+1} \sum_{t=0}^T\E_{\bg_{[t]}}[\|\nabla F(\bx_t)-\bg_t\|^2] \\
& \overset{(b)}{\le} \frac{4(F(\bx_0) - F^*)}{\eta (T+1)}  + \frac{9\sigma^2d}{\mu^2 Nm} + 9\mu^4 H^2 d^2 + \frac{H^2\mu^4}{2N}d(d+2)(d+4)(d+6)  + \frac{54d}{N} \|\nabla F(\bx_t)\|^2\\
&\overset{(c)}{=} O(\epsilon^2) + 54 \epsilon^3 d^{-1} \frac{1}{T+1} \sum_{t=0}^T\E_{\bg_{[t]}} [\|\nabla F(\bx_t)\|^2]\\
&\overset{(d)}{\le} O(\epsilon^2) + \frac{27}{32} \frac{1}{T+1} \sum_{t=0}^T\E_{\bg_{[t]}} [\|\nabla F(\bx_t)\|^2]
\end{align*}
\normalsize
where (a) comes from Lemma \ref{lem:simple_descent}; 
(b) follows from Lemma \ref{lem:g_variance_Ga_batch1_H_smooth}; 
(c) is due to the facts that $T=\Theta(\epsilon^{-2})$, $N=d^2 \epsilon^{-3}$, $\mu:=\Theta(\epsilon^{\frac{1}{2}} d^{-\frac{1}{2}})$, and $m=1$;
(d) is due to $d\ge 1$ and $\epsilon\le \frac{1}{4}$.
Since $\E[\|\nabla F(\bar{\bx})\|^2]=\frac{1}{T+1} \sum_{t=0}^T\E_{\bg_{[t]}} [\|\nabla F(\bx_t)\|^2]$,
\begin{align*}
&\left(1- \frac{27}{32}\right)\E[\|\nabla F(\bar{\bx})\|^2] = O(\epsilon^2). 
\end{align*}
Therefore,
\begin{align*}    
&\E[\|\nabla F(\bar{\bx})\|^2] = \frac{32}{5} O(\epsilon^2)=O(\epsilon^2).
\end{align*}
Therefore, the sample complexity is $O(NmT)=O(d^2\epsilon^{-5})$.
\end{proof}

\begin{remark}
\label{rem:ga_sp_order_same}
Ignoring constant factors, the gradient estimators  $\bg_m^{\rm{sp}}$ and $\bg_m^{\rm{ga}}$ exhibit similar properties by scaling the smoothing parameter $\mu$, namely
$$
\mu_{\mathrm{ga}}^2=\frac{\mu_{\mathrm{sp}}^2}{d}\quad \left(\text{equivalently } \mu_{\mathrm{ga}}=\frac{\mu_{\mathrm{sp}}}{\sqrt d}\right).
$$
Indeed, comparing the upper bounds in Lemma~\ref{lem:g_variance_sp_batch1} (about the gradient estimator via smoothing on a sphere) with Lemmas~\ref{lem:g_variance_Ga_batch1} (about the gaussian-smoothed gradient estimator), and substituting $\mu_{\mathrm{ga}}^2=\mu_{\mathrm{sp}}^2/d$, the error terms in the upper bounds match in order:

$$
\begin{aligned}
\text{(finite-sample variance for $F$)}\quad 
&\frac{3\sigma^2 d}{\mu_{\mathrm{ga}}^2 N m}
= \frac{3\sigma^2 d^2}{\mu_{\mathrm{sp}}^2 N m},\\[4pt]
\text{(smoothing bias)}\quad 
& 3 M^2 \mu_{\mathrm{ga}}^2 d
= 3 M^2 \mu_{\mathrm{sp}}^2,\\[4pt]
\text{(variance from randomness of directions)}\quad 
& \frac{3M^2 \mu_{\mathrm{ga}}^2}{2N} d(d+2)(d+4) + \frac{18d}{N}\|\nabla F(\bx)\|^2 \\
&\asymp \frac{3 M^2 \mu_{\mathrm{sp}}^2d^2}{2N}+ \frac{d^2}{N(d+2)}\|\nabla F(\bx)\|^2.
\end{aligned}
$$
This match also explains why, under the optimal settings $\mu=\Theta(\epsilon)$ (sphere) and $\mu=\Theta(\epsilon d^{-1/2})$ (Gaussian), both estimators yield the same sample-complexity orders.
\end{remark}

\subsection{Comparison of gradient estimators with coordinate-wise and random directions}
\label{subsec:comparison}
We summarize the bounds derived in Sections \ref{subsec:grad_est_coordinate}--\ref{subsec:gaussian_estimator}.
The sample complexities (to guarantee $\E\|\nabla F(\bar{\bx})\|^2\le \epsilon^2$) are as follows.

$$
\begin{aligned}
&\textbf{Coordinate-wise:} 
&&\begin{cases}
O\!\left(d^{3}\,\epsilon^{-6}\right) & \text{(Gradient Lipschitz; Theorem \ref{thm:sample_complexity_co_grad_lip}}),\\[2pt]
O\!\left(d^{\frac{5}{2}}\,\epsilon^{-5}\right) & \text{(Gradient and Hessian Lipschitz; Theorem~\ref{thm:sample_complexity_co_Hess_lip})},
\end{cases}\\[6pt]
&\textbf{Uniform distribution on a sphere:} 
&&\begin{cases}
O\!\left(d^{2}\,\epsilon^{-6}\right) & \text{(Gradient Lipschitz; Theorem~\ref{thm:sp_sample_complexity})},\\[2pt]
O\!\left(d^{2}\,\epsilon^{-5}\right) & \text{(Gradient and Hessian Lipschitz; Theorem~\ref{thm:sample_complexity_sp_Hessian_lip})},
\end{cases}\\[6pt]
&\textbf{Gaussian distribution:} 
&&\begin{cases}
O\!\left(d^{2}\,\epsilon^{-6}\right) & \text{(Gradient and Hessian Lipschitz; Theorem~\ref{thm:ga_convergence_grad_lip})},\\[2pt]
O\!\left(d^{2}\,\epsilon^{-5}\right) & \text{(Gradient and Hessian Lipschitz;  Theorem~\ref{thm:ga_convergence_Hess_lip})}.
\end{cases}
\end{aligned}
$$
Thus, in our setting, random-direction estimators (sphere/Gaussian) dominate the coordinate-wise estimator.

\paragraph{The reason why our results reverse the conclusion of \citep{berahas2022theoretical}.}
\citet{berahas2022theoretical}, considering bounded and possibly biased noise, conclude that the gradient estimator with coordinate-wise directions (or the gradient estimator by linear-interpolation) achieve higher accuracy than those with random-directions. 
The difference from our conclusion arises from the assumption for the noise: whereas we consider unbiased stochastic noise, i.e., $\E_{\bmxi}[f(\bx,\bmxi)] = F(\bx)$, they assume biased noise. 
Under our assumption, when we reduce the error from randomness of directions (the third and fourth terms of upper bounds in Lemmas \ref{lem:g_variance_sp_batch1}--\ref{lem:g_variance_sp_batch1_H_smooth} and \ref{lem:g_variance_Ga_batch1}--\ref{lem:g_variance_Ga_batch1_H_smooth}) by increasing the number of random directions $N$, we can simultaneously reduce the error from approximating $F$ with finite samples (the first term of the upper bounds)  by independently resampling $\bmxi$ for each direction’s evaluations. 
In contrast, since \citet{berahas2022theoretical} assume biased noise, increasing the number of samples cannot reduce the error from the bias.

\section{Experiments} \label{sec:experiment}
Following \citep{hikima2025guided}, we conducted two experiments on applications of \emph{multiproduct pricing} \citep{hikima2025zeroth} and \emph{strategic classification} \citep{levanon2021strategic} to evaluate Algorithm \ref{alg:simple} with different gradient estimators.
All experiments were conducted on a computer with an AMD EPYC 7413 24-Core Processor, 503.6 GiB
of RAM, and Ubuntu 20.04.6 LTS.
The program code was implemented in Python 3.12.2.

\paragraph{Compared methods.} 
We implemented the following methods.
The detailed parameter setting can be found in Appendix \ref{sec:price_parameters} and 
\ref{app:sclassification_parameters}.
\\
\textbf{ZO-CO:} Algorithm~\ref{alg:simple} using the gradient estimator \eqref{eq:mini_batch_gradient_est}.
\\
\textbf{ZO-SPH:} Algorithm~\ref{alg:simple} using the gradient estimator \eqref{eq:mini_batch_gradient_est_sp}.
\\
\textbf{ZO-GA:} Algorithm~\ref{alg:simple} using the gradient estimator \eqref{eq:mini_batch_gradient_est_ga}.
\\
\textbf{ZO-OG:} a Zeroth-Order method with a One-point Gradient estimator.
It is analogous to the zeroth-order method used in the existing studies \citep{ray2022decision,liu2023time}.
\\
\textbf{ZO-OGVR:} a Zeroth-Order method using a One-point Gradient estimator with a Variance Reduction parameter \citep[Algorithm 1]{hikima2025zeroth}.

\subsection{Multiproduct pricing application}
We conducted experiments on the same problem as \citep{hikima2025zeroth}.
Specifically, we consider the following multi-product pricing problem:
\begin{align*}
    \min_{\bx \in \R^{30}} \; \E_{\bmxi \sim D(\bx)} \big[ f(\bx,\bmxi) \big],
\end{align*}
where $\bx \in \R^{30}$ and $\bmxi \in \R^{30}$ denote the price and demand vectors for $30$ products, respectively.
We define $f(\bx,\bmxi) \coloneqq -\,s(\bx,\bmxi) + c(\bmxi)$, where $s(\bx,\bmxi)$ and $c(\bmxi)$ represent the sales revenue and production cost, respectively.
Let $D(\bx)$ denote the (price-dependent) probability distribution of $\bmxi$.
These experiments were semi-synthetic: some parameters were set using real retail data from a supermarket service provider in Japan;\footnote{The dataset, ``New Product Sales Ranking'', is publicly available from KSP-SP Co., Ltd.; see \url{http://www.ksp-sp.com}. Accessed August 23, 2025.}
the remaining parameters were set synthetically.
Details of the parameterization are provided in Appendix~\ref{sec:pricing_setting}.

\subsubsection*{Setting and metric} 
We performed our experiments with the following settings.\\
\textbf{Initial points.}
For all methods, we set the initial points as $\bx_0:=0.5\cdot \bone$, where $\bone:=(1,\dots,1) \in \R^{10}$.
\\
\textbf{Metric.}
To evaluate the output $\hat{\bx}$, we computed $obj:= \frac{1}{10^3}\sum_{q=1}^{10^3} f(\hat{\bx},\bmxi^q(\hat{\bx}))$, where $\bmxi^q(\hat{\bx}) \sim D(\hat{\bx})$.\\
\textbf{Termination criteria.}
We terminated each method once it had taken 5000 samples from $D(\bx)$ for some $\bx$.

\subsubsection*{Experimental Results} \label{subsec:exre}
Table \ref{tab:real} shows the results of the experiments using real data from different weeks.
These results exhibit performance of methods consistent with our theoretical analysis. 
First, the methods with lower theoretical sample complexity, \textbf{ZO-GA} and \textbf{ZO-SPH}, tend to achieve higher performance. 
Furthermore, as stated in Remark~2, the performance of \textbf{ZO-GA} and \textbf{ZO-SPH} perform nearly identically.

Figure \ref{fig2} illustrates the objective value (\textit{obj}) obtained by each method against the cumulative number of samples from $D(\bx)$ in its optimization process. 
The figure implies that \textbf{ZO-GA} and \textbf{ZO-SPH} tend to reduce the objective value more sharply than \textbf{ZO-CO} and  \textbf{ZO-OG}.
This is due to the following reasons:
(i) As established by our theoretical analysis (Section \ref{subsec:prop_each_method}), \textbf{ZO-CO} exhibits a greater distance between the gradient estimator and the true gradient, than \textbf{ZO-GA} and \textbf{ZO-SPH};
(ii) \textbf{ZO-OG} could not stably decrease the objective value because of the large variance of the one-point gradient estimators.\footnote{The large variance of the one-point (also called single-point) gradient estimator is also noted in \citep{chen2022improve}.}
Although \textbf{ZO-OGVR} reduces the objective value more shapely than \textbf{ZO-GA} and \textbf{ZO-SPH} in (c), \textbf{ZO-GA} and \textbf{ZO-SPH} tend to outperform \textbf{ZO-OGVR} overall.

\begin{table*}[t]
  \centering
  \caption{Experimental results for multiple product pricing in 20 randomly generated problem instances. 
The \emph{obj} (\emph{sd}) column represents the average (standard deviation) of the \emph{obj}.
The best value of the average \emph{obj} for each experiment is in bold.}
  \label{tab:real}
      \begin{tabular}{ccrrrrrrrrrrrrrrrrrr} \toprule
date
&\multicolumn{2}{c}{ZO-CO}&
\multicolumn{2}{c}{
ZO-SPH}&
\multicolumn{2}{c}{
ZO-GA}&
\multicolumn{2}{c}{
ZO-OG 
}&
\multicolumn{2}{c}{
ZO-OGVR
}
\\ 
\cmidrule(lr){2-3} \cmidrule(lr){4-5} \cmidrule(lr){6-7} \cmidrule(lr){8-9} \cmidrule(lr){10-11} \cmidrule(lr){12-13}
&\multicolumn{1}{c}{obj}&\multicolumn{1}{c}{sd}& 
\multicolumn{1}{c}{obj}&\multicolumn{1}{c}{sd}&
\multicolumn{1}{c}{obj}&\multicolumn{1}{c}{sd}&
\multicolumn{1}{c}{obj}&\multicolumn{1}{c}{sd}&
\multicolumn{1}{c}{obj}&\multicolumn{1}{c}{sd}&
 \\ 
\midrule
2/21--2/27
&-33.2	&0.9
&\textbf{-33.9}	&1.0
&-33.7	&0.8
&-30.4	&1.5
&-31.6	&1.4
\\
3/21--3/27
&-33.7	&1.2
&\textbf{-34.2}	&1.2
&-33.9	&1.1
&-31.1	&1.5
&-33.2	&1.2
\\ 
5/23--5/29
&-45.7	&2.1
&-50.1	&1.6
&\textbf{-50.2}	&1.8
&-29.4	&2.6
&-48.6	&1.4
\\
6/20--6/26
&-13.7	&1.2
&-13.9	&1.3
&\textbf{-14.0}	&1.6
&-13.1	&1.4
&-13.9	&1.2
\\
7/18--7/24
&-15.7	&1.0
&-15.2	&1.2
&-15.0	&1.3
&-14.7	&1.2
&\textbf{-16.0}	&1.2
\\
8/08--8/14
&-36.2	&1.2
&\textbf{-37.3}	&1.1
&-37.2	&1.1
&-32.0	&1.8
&-35.8	&1.4
\\ \bottomrule
\end{tabular}
\end{table*}

\begin{figure*}[t!] 
\centering
    \begin{tabular}{c}
        \begin{minipage}{0.33\hsize}
        \centering
        \includegraphics[keepaspectratio, scale=0.4, angle=0]
        {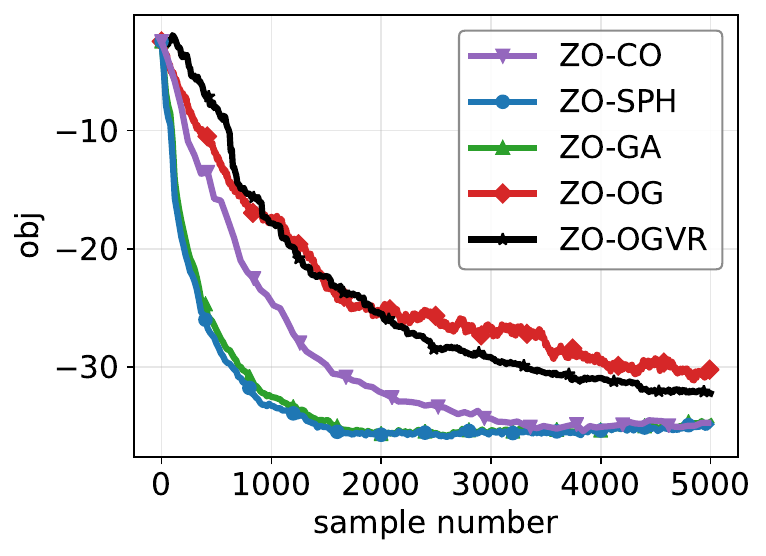}
        \subcaption{02/21--02/27}
        \label{n_ex}
        \end{minipage}
        
        \begin{minipage}{0.33\hsize}
        \centering
        \includegraphics[keepaspectratio, scale=0.4, angle=0]
        {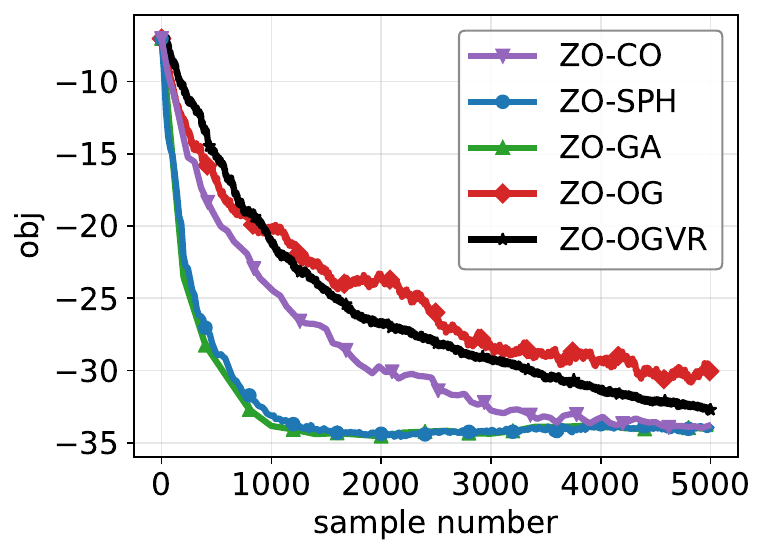}
        \subcaption{03/21--03/27}
        \label{m_ex}
        \end{minipage} 
        
        \begin{minipage}{0.33\hsize}
        \centering
        \includegraphics[keepaspectratio, scale=0.4, angle=0]
        {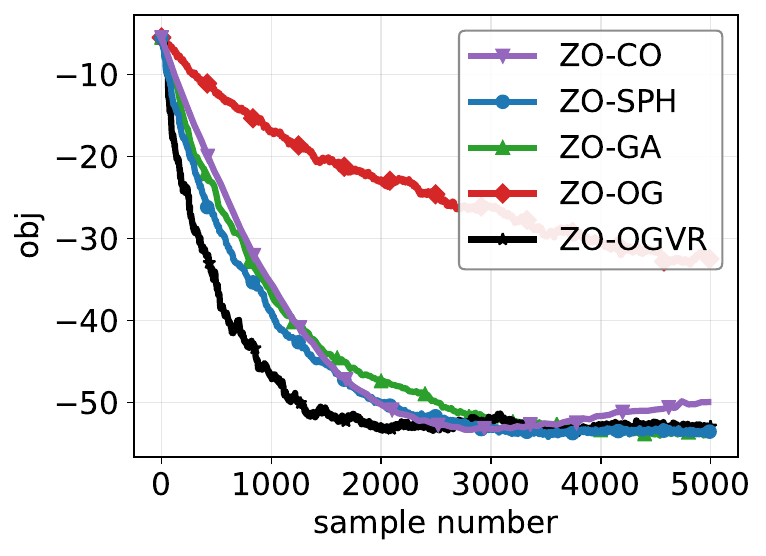}
        \subcaption{05/23--05/29}
        \end{minipage}\vspace{2mm}\\

        \begin{minipage}{0.33\hsize}
        \centering
        \includegraphics[keepaspectratio, scale=0.4, angle=0]
        {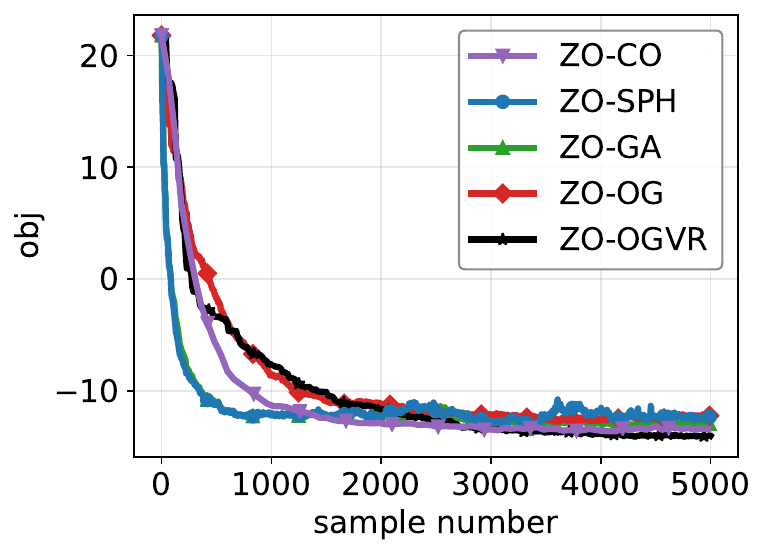}
        \subcaption{06/20--06/26}
        \end{minipage}
        
        \begin{minipage}{0.33\hsize}
        \centering        \includegraphics[keepaspectratio, scale=0.4, angle=0]
        {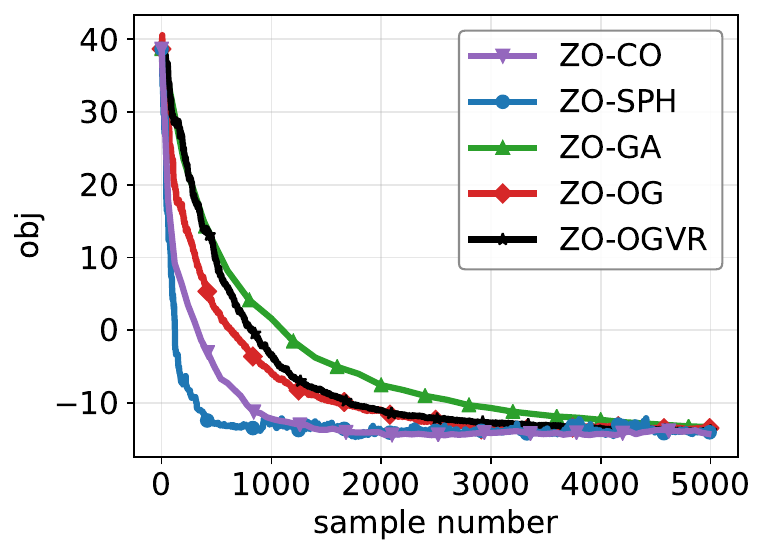}
        \subcaption{07/18--07/24}
        \end{minipage}
        
        \begin{minipage}{0.33\hsize}
        \centering
        \includegraphics[keepaspectratio, scale=0.4, angle=0]
        {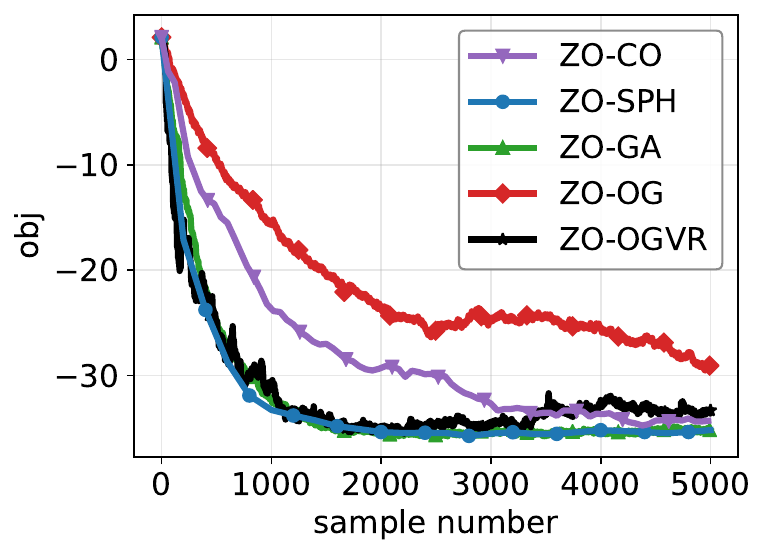}
        \subcaption{08/08--08/14}
        \end{minipage}
        
    \end{tabular}
\caption{
Change in \textit{obj} in the simulation experiment with real data.
Each graph shows the result in one problem instance for each week.
The horizontal axis indicates the number of samples, and the vertical axis indicates \textit{obj}.
}\label{fig2}
\end{figure*}

\subsection{Strategic Classification with Unknown Agents' Cost Functions}
Following prior work \citep{hikima2025guided}, we conducted experiments on the application of strategic classification with a real dataset from \citep{YEH20092473}.\footnote{As with \citep{levanon2021strategic}, we used a processed version of the data from \citep{ustun2019actionable}.} 
We consider a variant of the setting in \citep[Section~4]{levanon2021strategic}.
A decision maker selects a classifier parameter \( \bx \in \R^{12} \) to determine loan approval or denial for an agent whose observable features are \( \bmxi_F \in \R^{11} \) and label \( L \in \{0,1\} \);
feature \(\bmxi_F\) includes credit-card spending patterns, while \(L\) indicates whether the agent defaults.
Because agents may modify features in response to the classifier parameterized by \( \bx \), the joint distribution of \( (\bmxi_F, L) \) depends on $\bx$.
We denote the distribution by \( D(\bx) \).

Then, the loss-minimization problem is as follows:
\begin{align*}
    \min_{\bx\in \R^{12}} \quad \E_{(\bmxi_F, L) \sim D(\bx)} \left[ f(\bmxi_F, L ;\bx) \right].
\end{align*}
Here, $f$ is the loss function defined by
\begin{align*}
f(\bmxi_F,L;\bx) := &L \log \left( \frac{1}{1+e^{-(\bx_{[11]}^\top \bmxi+x_{12})}}\right)+ (1-L) \log \left(1-\frac{1}{1+e^{-(\bx_{[11]}^\top \bmxi+x_{12})}}\right),
\end{align*}
\normalsize
where $\bx_{[11]}:=(x_1, x_2, \dots, x_{11})$.

\paragraph{Modeling the unknown $D(\bx)$.}
Each agent possesses latent (“true”) features \(\bmxi_{\mathrm{true}} \sim D_{\mathrm{true}}\) but may modify the observable features in response to the classifier parameter \(\bx\) and an associated adaptation cost.
The reported features arise from the following response map
\[
  \bmxi_F(\bx) \in \arg\max_{\bmxi \in \R^{11}} \big\{\, r(\bmxi;\bx) - c(\bmxi,\bmxi_{\mathrm{true}}) \,\big\},
\]
with the payoff and cost given by
\[
  r(\bmxi;\bx) :=
  \begin{cases}
    2, & \bx_{[11]}^\top \bmxi + x_{12} \ge 0,\\
    0, & \text{otherwise},
  \end{cases}
  \qquad
  c(\bmxi,\bmxi_{\mathrm{true}}) := \|\bmxi - \bmxi_{\mathrm{true}}\|_2^2 .
\]
Here, \( r(\bmxi;\bx) \) captures the value of a positive decision (loan approval),\footnote{We set the reward to 2, following \citep{levanon2021strategic}.} while \( c(\bmxi,\bmxi_{\mathrm{true}}) \) encodes the adjustment cost. 
We treat the real dataset as samples of \( (\bmxi_{\mathrm{true}}, L) \) and set \( D(\bx) \) through the above response mapping.

\begin{remark} \label{remark:D_x_discontiunous}
The present setup violates the smoothness requirement in Assumption~\ref{asm:F_smooth} and \ref{asp:F_hessian_lipschitz}.
Because the reward term \(r(\bmxi;\bx)\) is discontinuous in \(\bx\) and the induced decision-dependent distribution \(D(\bx)\) can change discontinuously at parameter values where the benefit \(r(\bmxi;\bx)-c(\bmxi,\bmxi_{\mathrm{true}})\) for an agent switches sign (i.e., where the adaptation cost meets or exceeds the gain). 
\end{remark}

\subsubsection*{Setting and metric} 
We performed our experiments under the following settings.\\
\textbf{Initial points.}
For all methods, we set the initial points as $\bx_0:=\bone$, where $\bone:=(1,\dots,1) \in \R^{12}$.
\\
\textbf{Metrics.} 
For the output of each method, we used the training loss, the test loss, the test AUC, and the test accuracy as metrics. \\
\textbf{Termination criteria.}
We terminated each method once it had taken $5000$ samples from $D(\bx)$ for some $\bx$.

\begin{table*}[t]
  \centering
  \caption{
  Results of experiments on strategic classification for 100 randomly generated problem instances. 
The \emph{metric} (\emph{sd}) column represents its average value (standard deviation).
The best value of each metric for each experiment is in bold. 
Symbol $\downarrow$ ($\uparrow$) indicates that lower (higher) values are better. 
  }
\label{tab:real_2}
\begin{tabular}{ccccccccccc} \toprule
metric&\multicolumn{2}{c}{ZO-CO}&
\multicolumn{2}{c}{
ZO-SPH}&
\multicolumn{2}{c}{
ZO-GA}&
\multicolumn{2}{c}{
ZO-OG 
}&
\multicolumn{2}{c}{
ZO-OGVR
}
\\ 
\cmidrule(lr){2-3} \cmidrule(lr){4-5} \cmidrule(lr){6-7} \cmidrule(lr){8-9} \cmidrule(lr){10-11}
&\multicolumn{1}{c}{mean}&\multicolumn{1}{c}{sd}& 
\multicolumn{1}{c}{mean}&\multicolumn{1}{c}{sd}&
\multicolumn{1}{c}{mean}&\multicolumn{1}{c}{sd}&
\multicolumn{1}{c}{mean}&\multicolumn{1}{c}{sd}&
\multicolumn{1}{c}{mean}&\multicolumn{1}{c}{sd}
 \\ 
\midrule
train loss ($\downarrow$)
&0.782	&0.074
&\textbf{0.758}	&0.068
&0.794	&0.122
&0.882	&0.125
&0.836	&0.083
\\
test loss ($\downarrow$)
&0.817	&0.084
&\textbf{0.786}	&0.076
&0.821	&0.129
&0.914	&0.133
&0.867	&0.091
\\
test accuracy ($\uparrow$)
&0.595	&0.036
&\textbf{0.599}	&0.044
&0.583	&0.051
&0.572	&0.045
&0.583	&.043
\\
test AUC ($\uparrow$)
&\textbf{0.668}	&0.042
&0.665	&0.048
&0.644	&0.059
&0.626	&0.057
&0.645	&0.050
 \\ 
\midrule
\end{tabular}
\end{table*}

\subsubsection*{Experimental Results}
Table~\ref{tab:real_2} reports the experimental results across multiple metrics.\footnote{Note that the performance of each method in our experiments is lower than that in typical settings without strategic agents, because the strategic agent manipulates its own features to mislead the classifier.}
Overall, \textbf{ZO-SPH} and \textbf{ZO-CO} achieve stronger performance.
Although \textbf{ZO-GA} has sample complexity comparable to \textbf{ZO-SPH} according to the analysis in Section~\ref{sec:theoretical_analysis}, it performs worse in the experiments.
This is because, as discussed in Remark~\ref{remark:D_x_discontiunous}, the problem under consideration does not satisfy the assumptions in Section~\ref{sec:assumption}.

\section{Conclusion}
In this paper, we conduct a unified sample complexity analysis across different gradient estimators, together with appropriate selections of the algorithmic parameters.
As a result, we show that gradient estimators that average over multiple directions ($N>1$), either uniformly from the unit sphere or from a Gaussian distribution, achieve the lowest sample complexity.
Consistent with these theoretical results, our simulation experiments demonstrate that zeroth-order methods using these gradient estimators achieve stronger empirical performance than the baselines under our assumptions.

As a direction for future research, it would be interesting to incorporate more advanced acceleration techniques \citep{huang2022accelerated,chen2022improve} into our framework. For instance, the momentum scheme proposed by \citep{huang2022accelerated} achieves improved convergence rates for problem \eqref{eq:opt_problem_no_dependent}; extending such an approach to problem \eqref{eq:opt_problem} may further enhance the convergence properties of zeroth-order methods.

\bibliographystyle{abbrvnat}
\bibliography{reference}   

\appendix

\section{Details of Our Experiments}
\label{app:detail_exp}

\subsection{Experiments on multiproduct pricing}
\subsubsection{Problem setting} \label{sec:pricing_setting}
We performed semi-synthetic experiments based on \citep{hikima2025zeroth}. 
A subset of model parameters was set according to retail records provided by a Japanese supermarket service vendor (details are in Section \ref{sec:data_detail}).
We consider a multi-product pricing environment with $n=30$ items and $m=120$ potential buyers.
Each buyer may purchase at most one unit of a single product. 
Let $\bx=(x_1,\ldots,x_{30})\in\R^{30}$ denote the price (decision) vector, and let $\bmxi\in\{0,1,\ldots,m\}^{31}$ represent random demand, where $\xi_i$ (for $i=1,\ldots,30$) counts the units sold of product $i$, and $\xi_{31}$ counts buyers who make no purchase.

The optimization problem is expressed as
\begin{align*}
    \min_{\bx\in\R^{30}}~\E_{\bmxi\sim D(\bx)}\!\left[\, f(\bx,\bmxi)\,\right],
\end{align*}
with $f(\bx,\bmxi):=-s(\bx,\bmxi)+c(\bmxi)$ and $D(\bx)$ the demand distribution induced by prices $\bx$.
Here, $s(\bx,\bmxi)$ denotes sales revenue and $c(\bmxi)$ the production cost:
\begin{align*}
s(\bx,\bmxi):=\sum_{i=1}^n x_i\,\xi_i,
\qquad
c(\bmxi):=\sum_{i=1}^n c_i(\xi_i),
\end{align*}
where $c_i$ is the production cost for each item and defined by
\begin{align*}
c_i(\xi_i):=
\begin{cases}
2w_i\,\xi_i, & \xi_i\le l_i,\\
w_i(\xi_i-l_i)+2w_i l_i, & l_i<\xi_i\le u_i,\\
3w_i(\xi_i-u_i)+w_i(u_i-l_i)+2w_i l_i, & \xi_i>u_i.
\end{cases}
\end{align*}
We set $l_i:=\frac{0.5m}{n}$, $u_i:=\frac{1.5m}{n}$, and $w_i:=\rho_i\theta_i$, 
with $\rho_i\sim\mathrm{Unif}[0.25,0.5]$ and $\theta_i$ the normalized recorded average price of product $i$.
This setting of $c_i$ reflects the scenario where the production cost rate varies based on the number of units
sold.

\paragraph{Demand model $D(\bx)$ (unknown to the algorithms).}
Each buyer independently selects one item $i\in \{1,\ldots,n\}$ with probability
\[
p_i(\bx)=\frac{\exp\{\gamma_i(\theta_i-x_i)\}}{a_0+\sum_{j=1}^n \exp\{\gamma_j(\theta_j-x_j)\}},
\]
or opts out (no purchase) with probability
\[
p_0(\bx)=\frac{a_0}{a_0+\sum_{j=1}^n \exp\{\gamma_j(\theta_j-x_j)\}}.
\]
We take $\gamma_i:=\tfrac{2\pi}{\sqrt{6}\,\theta_i}$ and $a_0:=0.1n$.
Consequently,
\[
\Pr(\xi_i\,|\,\bx)=\binom{m}{\xi_i}\,p_i(\bx)^{\xi_i},
\]
for $i=1,\ldots,n$.
Importantly, the methods evaluated do not exploit any oracle access to $D(\bx)$; the distribution is used only to generate sample demand in the experiments.

\subsubsection{Parameters of methods} \label{sec:price_parameters}
We selected hyperparameters for each week of data via grid search with
$\beta \in \{10^{-5}, 10^{-4}, 10^{-3},10^{-2}\}$ and \\
$\mu \in \{0.004, 0.02, 0.1, 0.5, 2.5\}$.
$m \in \{1, 10, 10^2\}$, and $N \in \{1,10,10^2\}$.
For \textbf{ZO-SPH} and \textbf{ZO-GA}, we fixed $m=1$ and chose $N$ from $\{1, 10, 10^2\}$.
For \textbf{ZO-CO}, \textbf{ZO-OG}, and \textbf{ZO-OGVR}, we selected $m$ from $\{1, 10, 10^2\}$.
Moreover, we set other parameters of \textbf{ZO-OGVR} as follows:
$c_0 = \sum_{j=1}^{20} f(\bx_0,\bmxi^{j}(\bx_0))$, $s_{\max} = 10$, $M=0.1$.
For each data, we ran three independent trials and selected the hyperparameter configuration with the highest mean performance.

\subsubsection{Data details.} \label{sec:data_detail}
The experimental data, ``New Product Sales Ranking,'' provided by KSP-SP Co., Ltd, includes confectionery price data.
We used the actual prices of confectionery as $\theta$ in the buyer's probability function for purchase.
We also determined the cost of each product based on the actual prices.

\subsection{Experiments on strategic classification}
\label{app:details_sclassification}
\subsubsection{Parameters of methods} 
\label{app:sclassification_parameters}
Hyperparameters are chosen from the following grids:
$\beta \in \{10^{-2}, 10^{-1}, 1\}$, $\mu \in \{0.1, 0.5, 1, 2, 4\}$,
$m \in \{1, 10, 10^2\}$, and $N \in \{1,10,10^2\}$.
We ran 20 independent trials per configuration and selected the one with the highest average over all trials.

\subsubsection{Data details}
The experimental dataset \citep{YEH20092473} includes features describing credit-card spending patterns, along with labels indicating default on payment. As with \citep{levanon2021strategic}, we used the preprocessed version of the data by \cite{ustun2019actionable}. The dataset includes $n=11$ features. We divided $13272$ data samples into a $12272$-sample training set and $1000$-sample test set in our experiments.

\section{Sufficient condition for Assumptions \ref{asm:F_smooth} and \ref{asp:F_hessian_lipschitz}}
\label{app:suf_condition}

\begin{lemma} \citep[Lemma 1]{ray2022decision}
Suppose that there exist a matrix $\bA$ and distribution $D'$ such that
$$ \bmxi \sim D(\bx) \Longleftrightarrow \bmxi = \bnu + \bA\bx,$$
where $\bnu \sim D'$. 
If $\nabla f(\bx,\bmxi)$ is $\beta$-Lipschitz continuous with respect to both $\bx$ and $\bmxi$,
then Assumption \ref{asm:F_smooth} holds with
$$M :=\sqrt{\beta^2 (1+\|A\|^2_{\textrm{op}}) \max (1, \|A\|^2_{\textrm{op}})},$$
where $\|A\|_{\textrm{op}}$ is the operator norm of $A$, that is, $\|A\|_{\textrm{op}}:= \sup_{x\in \R^d, x \neq 0}\frac{\|Ax\|}{\|x\|}$.
If $\nabla^2 f(\bx,\bmxi)$ is $\rho$--Lipschitz continuous with respect to both $\bx$ and $\bmxi$,
then Assumption \ref{asp:F_hessian_lipschitz} holds with
$$H :=\sqrt{\rho^2 (1+\|A\|^2_{\textrm{op}}) \max (1, \|A\|^4_{\textrm{op}})}.$$
\end{lemma}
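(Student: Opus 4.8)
The plan is to exploit the pure location-family structure $\bmxi = \bnu + \bA\bx$ to rewrite the objective as $F(\bx) = \E_{\bnu \sim D'}[f(\bx,\bnu+\bA\bx)]$ and then reduce the two claims to pointwise statements about the integrand. Since Assumption~\ref{asm:F_smooth} is implied by $\nabla F$ being $M$-Lipschitz and Assumption~\ref{asp:F_hessian_lipschitz} is implied by $\nabla^2 F$ being $H$-Lipschitz (both are the standard Taylor-remainder consequences of a Lipschitz gradient/Hessian), it suffices to bound these two Lipschitz moduli. Writing $g_{\bnu}(\bx):=f(\bx,\bnu+\bA\bx)$, I would first justify differentiating under the expectation using the smoothness of $f$ together with a dominated-convergence argument, so that $\nabla F=\E_{\bnu}[\nabla g_{\bnu}]$ and $\nabla^2 F=\E_{\bnu}[\nabla^2 g_{\bnu}]$. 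Because expectation is an average, a Lipschitz bound on $\nabla g_{\bnu}$ (resp. $\nabla^2 g_{\bnu}$) that is uniform in $\bnu$ passes to $\nabla F$ (resp. $\nabla^2 F$) by the triangle inequality for the Euclidean (resp. operator) norm. Thus the whole problem collapses to a deterministic, $\bnu$-uniform Lipschitz estimate for the integrand.

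Next I would push the derivatives through the affine reparametrization $\bphi(\bx):=(\bx,\bnu+\bA\bx)$, whose constant Jacobian is the stacked matrix $\bm{J}$. By the chain rule,
\[
\bm{J} := \begin{pmatrix} \bI \\ \bA \end{pmatrix}, \qquad \nabla g_{\bnu}(\bx) = \bm{J}^{\top}\,\nabla f(\bphi(\bx)), \qquad \nabla^{2} g_{\bnu}(\bx) = \bm{J}^{\top}\,\nabla^{2} f(\bphi(\bx))\,\bm{J},
\]
where $\nabla f$ and $\nabla^2 f$ denote the full joint gradient and Hessian in $(\bx,\bmxi)$. The one structural identity I need is $\bm{J}^{\top}\bm{J}=\bI+\bA^{\top}\bA$, which gives $\|\bm{J}\|_{\mathrm{op}}=\sqrt{1+\|\bA\|_{\mathrm{op}}^{2}}$.

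To propagate Lipschitzness for the gradient, I would write $\nabla g_{\bnu}(\bx)-\nabla g_{\bnu}(\bx')=\bm{J}^{\top}[\nabla f(\bphi(\bx))-\nabla f(\bphi(\bx'))]$, factor out $\|\bm{J}\|_{\mathrm{op}}$, and apply the $\beta$-Lipschitzness of $\nabla f$ to the two-block increment $\bphi(\bx)-\bphi(\bx')=(\bx-\bx',\,\bA(\bx-\bx'))$. The first block scales by $1$ and the second by $\|\bA\|_{\mathrm{op}}$, so the increment contributes a factor $\max(1,\|\bA\|_{\mathrm{op}})$; combining the two factors yields exactly $\beta\sqrt{(1+\|\bA\|_{\mathrm{op}}^{2})\max(1,\|\bA\|_{\mathrm{op}}^{2})}=M$. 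The Hessian case is analogous, starting from the sandwich $\bm{J}^{\top}[\nabla^2 f(\bphi(\bx))-\nabla^2 f(\bphi(\bx'))]\bm{J}$ and invoking the $\rho$-Lipschitzness of $\nabla^2 f$, which should reproduce the stated $H$ after tracking the appropriate powers of $\|\bA\|_{\mathrm{op}}$.

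The hard part will be extracting the \emph{sharp} constants, i.e. the $\max(1,\|\bA\|_{\mathrm{op}})$ and $\max(1,\|\bA\|_{\mathrm{op}}^{2})$ factors, rather than the blunter $(1+\|\bA\|_{\mathrm{op}}^{2})$-type bound that a careless use of operator-norm submultiplicativity produces; a naive sandwich estimate $\|\bm{J}^{\top}S\bm{J}\|_{\mathrm{op}}\le\|\bm{J}\|_{\mathrm{op}}^{2}\|S\|_{\mathrm{op}}$ is valid but loose, so matching the claimed $H$ requires a more careful, asymmetric accounting of how one factor of $\|\bm{J}\|_{\mathrm{op}}$ trades off against the block-scaling $\max(1,\|\bA\|_{\mathrm{op}})$. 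This in turn hinges on fixing the precise meaning of ``$\nabla f$ is $\beta$-Lipschitz with respect to both $\bx$ and $\bmxi$'' — in particular, measuring the joint increment so that the two block scalings enter as a maximum rather than as a sum or as $\sqrt{1+\|\bA\|_{\mathrm{op}}^{2}}$. A secondary technical point is the measure-theoretic justification for interchanging differentiation and expectation when $D'$ has unbounded support, which I would handle via a uniform integrable envelope supplied by the Lipschitz hypotheses on $\nabla f$ and $\nabla^2 f$.
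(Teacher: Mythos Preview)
The paper does not prove this lemma; it appears in Appendix~\ref{app:suf_condition} solely as a quotation of Lemma~1 from the cited reference, with no accompanying argument. There is therefore nothing in the present paper to compare your proposal against.

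For what it is worth, your route --- rewriting $F(\bx)=\E_{\bnu\sim D'}[f(\bx,\bnu+\bA\bx)]$, differentiating through the affine reparametrization via the stacked Jacobian $\bm{J}$ with $\|\bm{J}\|_{\mathrm{op}}=\sqrt{1+\|\bA\|_{\mathrm{op}}^{2}}$, and then propagating Lipschitz moduli from $\nabla f$ and $\nabla^{2}f$ to $\nabla F$ and $\nabla^{2}F$ --- is the natural argument for a location-family reduction and is almost certainly what the cited source does. Your caution about matching the exact constants (the $\max(1,\|\bA\|_{\mathrm{op}})$ versus $\sqrt{1+\|\bA\|_{\mathrm{op}}^{2}}$ issue) is well placed: recovering the stated $M$ and $H$ hinges entirely on how ``$\beta$-Lipschitz with respect to both $\bx$ and $\bmxi$'' is formalized, and the present paper does not spell that convention out, so you would need to consult the original source to close that gap.
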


\section{Technical lemmas}
We provide technical lemmas that are needed to prove the lemmas and theorems in our paper. 

\begin{techlemma}
\cite[Proposition 3.4(i)]{khanh2024globally}
\label{lem:g_bound_FD}
Suppose that Assumption \ref{asm:F_smooth} holds.
For any $\bx \in \R^d$, the following holds.
\begin{align*}
&\left\| \sum_{i=1}^{d}\frac{F(\bx+\mu \bme_i)-F(\bx-\mu \bme_i)}{2\mu}\bme_i  - \nabla F(\bx) \right\| \le \frac{M\sqrt{d}\mu}{2}.
\end{align*}
\end{techlemma}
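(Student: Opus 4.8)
The plan is to reduce the vector inequality to a collection of one-dimensional central finite-difference estimates, one per coordinate, and then recombine them in the $\ell_2$ norm. First I would observe that, since the estimator is expanded in the standard basis, the $i$-th coordinate of the vector $\sum_{i=1}^{d}\frac{F(\bx+\mu\bme_i)-F(\bx-\mu\bme_i)}{2\mu}\bme_i - \nabla F(\bx)$ is exactly $\frac{F(\bx+\mu\bme_i)-F(\bx-\mu\bme_i)}{2\mu} - \nabla F(\bx)^\top\bme_i$. Consequently the squared Euclidean norm splits as a sum over $i\in[d]$ of squared scalar central-difference errors, so it suffices to control each scalar error separately.

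Second, I would bound the per-coordinate error using the standard Taylor-remainder consequence of $M$-smoothness (Assumption~\ref{asm:F_smooth}): for any perturbation $\bd$ one has $|F(\bx+\bd)-F(\bx)-\nabla F(\bx)^\top\bd|\le \frac{M}{2}\|\bd\|^2$. Applying this with $\bd=\mu\bme_i$ and with $\bd=-\mu\bme_i$, subtracting the two expansions, and dividing by $2\mu$, the linear terms $\pm\mu\nabla F(\bx)^\top\bme_i$ cancel while the two quadratic remainders (each of size $\frac{M}{2}\mu^2$) add in the worst case, giving $\bigl|\frac{F(\bx+\mu\bme_i)-F(\bx-\mu\bme_i)}{2\mu}-\nabla F(\bx)^\top\bme_i\bigr|\le \frac{M\mu}{2}$.

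Finally, I would assemble the bound. Summing the $d$ identical squared per-coordinate estimates gives at most $d\cdot(M\mu/2)^2 = M^2\mu^2 d/4$, and taking square roots yields the claimed $\frac{M\sqrt{d}\,\mu}{2}$.

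The one point that needs care is the per-coordinate estimate: the constant $M\mu/2$ (rather than $M\mu$) is what makes the final bound tight, and it arises precisely because the central difference cancels the first-order terms while retaining only the two second-order remainders; the dimension factor $\sqrt d$ then enters solely through the $\ell_2$ aggregation of these $d$ coordinatewise bounds. The only subtlety is that the clean two-sided remainder estimate is really the Lipschitz-gradient reading of $M$-smoothness (equivalently, the descent inequality invoked symmetrically about both endpoints), which is exactly the setting of the cited \citep{khanh2024globally}.
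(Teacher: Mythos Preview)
Your proof is correct and is exactly the standard argument. The paper itself does not supply a proof for this lemma---it simply cites \cite[Proposition~3.4(i)]{khanh2024globally}---so there is nothing to compare against directly; however, your approach matches the technique the paper does spell out for the analogous Hessian-Lipschitz case in the proof of Lemma~\ref{lem:g_variance_FD_H_smooth} (see~\eqref{eq:hesse_lipschitz_continuous_first}), where they carry out the same add-and-subtract decomposition coordinatewise and then aggregate in $\ell_2$.
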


\begin{techlemma} \label{lem:sp_smoothed_error}
\cite[(2.35) and (2.36) with $e_f:=0$]{berahas2022theoretical}
If Assumption \ref{asm:F_smooth} holds, we have
$$\|\nabla F_{\mu,\maB}(\bx) - \nabla F(\bx)\| \le \mu M.$$
If Assumption \ref{asp:F_hessian_lipschitz} holds, we have
$$\|\nabla F_{\mu,\maB}(\bx) - \nabla F(\bx)\| \le \mu^2 H.$$
\end{techlemma}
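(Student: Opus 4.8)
The plan is to exploit the fact that differentiation commutes with the ball-average, so that $\nabla F_{\mu,\maB}$ is itself an average of \emph{true} gradients; both bounds then reduce to pointwise estimates of the deviation $\nabla F(\bx+\mu\bs)-\nabla F(\bx)$. Concretely, I would first establish the identity
\[
\nabla F_{\mu,\maB}(\bx) = \E_{\bs\sim\maB}\!\left[\nabla F(\bx+\mu\bs)\right]
\]
by differentiating $F_{\mu,\maB}(\bx)=\frac{1}{\mathrm{vol}(B)}\int_{\|\bs\|\le 1}F(\bx+\mu\bs)\,d\bs$ under the integral sign. This interchange is legitimate because $\nabla F$ is continuous (Assumption~\ref{asm:F_smooth}) and the domain of integration is bounded, which supplies the local uniform integrability needed to differentiate inside the integral.

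For the $M$-smooth case I would then apply Jensen's inequality to pull the norm inside the expectation and invoke $M$-Lipschitzness of $\nabla F$:
\[
\|\nabla F_{\mu,\maB}(\bx)-\nabla F(\bx)\|
= \left\|\E_{\bs}\!\left[\nabla F(\bx+\mu\bs)-\nabla F(\bx)\right]\right\|
\le \E_{\bs}\!\left[M\mu\|\bs\|\right]
\le M\mu,
\]
where the last step uses $\|\bs\|\le 1$ on the unit ball. This gives the first inequality directly.

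For the Hessian-Lipschitz case the extra ingredient is the central symmetry of $\maB$: since $\bs$ and $-\bs$ are identically distributed, I would symmetrize and rewrite the error as $\E_{\bs}\!\big[\tfrac12\big(\nabla F(\bx+\mu\bs)+\nabla F(\bx-\mu\bs)\big)-\nabla F(\bx)\big]$. Expanding each gradient to first order around $\bx$ with the remainder estimate $\|\nabla F(\bx\pm\mu\bs)-\nabla F(\bx)\mp\nabla^2 F(\bx)(\mu\bs)\|\le \tfrac{H}{2}\mu^2\|\bs\|^2$ (the gradient-form consequence of Assumption~\ref{asp:F_hessian_lipschitz}, the same remainder bound already used in \eqref{eq:hesse_lipschitz_continuous_first}), the linear Hessian terms $\pm\nabla^2 F(\bx)(\mu\bs)$ cancel in the symmetric average. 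Only the two third-order remainders survive, and bounding their average by $\tfrac{H}{2}\mu^2\|\bs\|^2\le H\mu^2$ yields the second inequality.

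The main obstacle is the symmetrization step: the gain of one power of $\mu$ (from $\mu$ to $\mu^2$) rests \emph{entirely} on the cancellation of the $\nabla^2 F(\bx)(\mu\bs)$ terms under $\bs\mapsto-\bs$, so one must invoke the symmetry of $\maB$ \emph{before} applying the triangle inequality; a naive Jensen bound, as in the $M$-smooth case, would only recover the weaker $O(\mu)$ rate. The remaining technical point—translating the function-value form of Assumption~\ref{asp:F_hessian_lipschitz} into the gradient-remainder estimate—is routine and parallels the manipulation already carried out in the proof of Lemma~\ref{lem:g_variance_FD_H_smooth}.
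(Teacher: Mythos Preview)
Your argument is correct. The paper does not supply its own proof of this lemma; it simply cites \cite[(2.35)--(2.36)]{berahas2022theoretical}, so there is nothing to compare against beyond noting that your derivation is the standard one. Two minor remarks: (i) the symmetrization step is unnecessary---since $\E_{\bs\sim\maB}[\bs]=\bm 0$ you can subtract $\nabla^2 F(\bx)(\mu\bs)$ inside the expectation for free and bound $\big\|\E_{\bs}[\nabla F(\bx+\mu\bs)-\nabla F(\bx)-\nabla^2 F(\bx)(\mu\bs)]\big\|\le \tfrac{H}{2}\mu^2\,\E_{\bs}[\|\bs\|^2]\le \tfrac{H}{2}\mu^2$ directly, which is slightly cleaner and in fact sharper by a factor of $2$; (ii) the display you reference, \eqref{eq:hesse_lipschitz_continuous_first}, is the \emph{function}-value remainder, whereas here you need the \emph{gradient}-level remainder $\|\nabla F(\by)-\nabla F(\bx)-\nabla^2 F(\bx)(\by-\bx)\|\le \tfrac{H}{2}\|\by-\bx\|^2$; both follow from $H$-Lipschitzness of $\nabla^2 F$, but they are distinct statements and you should invoke the latter explicitly.
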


\begin{techlemma}
\label{lem:ga_smoothed_error}
\cite[(2.10) and (2.11) with $e_f:=0$]{berahas2022theoretical}
If Assumption \ref{asm:F_smooth} holds, we have
$$\|\nabla F_{\mu,\maN}(\bx) - \nabla F(\bx)\| \le \sqrt{d} \mu M.$$
If Assumption \ref{asp:F_hessian_lipschitz} holds, we have
$$\|\nabla F_{\mu,\maN}(\bx) - \nabla F(\bx)\| \le d \mu^2 H.$$
\end{techlemma}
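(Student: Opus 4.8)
The statement to prove is Techlemma~\ref{lem:ga_smoothed_error}, which controls the bias of the Gaussian-smoothed gradient $\nabla F_{\mu,\maN}$ relative to the true gradient. The plan is to exploit the representation of $\nabla F_{\mu,\maN}$ as an expectation of the true gradient over Gaussian perturbations. Since $F_{\mu,\maN}(\bx)=\E_{\bu\sim\maN(0,\bI)}[F(\bx+\mu\bu)]$ by \eqref{def:F_mu_ga}, differentiating under the expectation (legitimate because the Lipschitz regularity in Assumptions~\ref{asm:F_smooth} and~\ref{asp:F_hessian_lipschitz} supplies a dominating integrable bound) gives
\[
\nabla F_{\mu,\maN}(\bx)=\E_{\bu\sim\maN(0,\bI)}[\nabla F(\bx+\mu\bu)].
\]
Subtracting $\nabla F(\bx)$ and moving the norm inside via Jensen's inequality, I would reduce both bounds to estimating $\E_{\bu}\|\nabla F(\bx+\mu\bu)-\nabla F(\bx)\|$ against low-order moments of a standard Gaussian.

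For the first bound, I apply the $M$-Lipschitz continuity of $\nabla F$ (Assumption~\ref{asm:F_smooth}) to obtain $\|\nabla F(\bx+\mu\bu)-\nabla F(\bx)\|\le M\mu\|\bu\|$, so that $\|\nabla F_{\mu,\maN}(\bx)-\nabla F(\bx)\|\le M\mu\,\E_{\bu}\|\bu\|$. The proof then closes with the elementary moment estimate $\E_{\bu}\|\bu\|\le\sqrt{\E_{\bu}\|\bu\|^2}=\sqrt d$ (Jensen's inequality together with $\E_{\bu}\|\bu\|^2=d$ for $\bu\sim\maN(0,\bI)$), yielding exactly $\sqrt d\,\mu M$.

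For the second bound, the key is that the leading first-order term vanishes in expectation. Under Assumption~\ref{asp:F_hessian_lipschitz}, $H$-Lipschitz continuity of $\nabla^2 F$ gives the integrated remainder estimate $\|\nabla F(\bx+\mu\bu)-\nabla F(\bx)-\mu\nabla^2 F(\bx)\bu\|\le\tfrac{H}{2}\mu^2\|\bu\|^2$. Because $\E_{\bu}[\bu]=\bm 0$, the Hessian term has zero mean, so
\[
\nabla F_{\mu,\maN}(\bx)-\nabla F(\bx)=\E_{\bu}\big[\nabla F(\bx+\mu\bu)-\nabla F(\bx)-\mu\nabla^2 F(\bx)\bu\big],
\]
whose norm is at most $\tfrac{H}{2}\mu^2\,\E_{\bu}\|\bu\|^2=\tfrac{H}{2}\mu^2 d\le d\mu^2 H$, establishing the claim (in fact with the slightly sharper constant $d\mu^2 H/2$).

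I expect no serious obstacle here, as this is a standard Gaussian-smoothing estimate whose only quantitative inputs are the moments $\E_{\bu}\|\bu\|^2=d$ and $\E_{\bu}\|\bu\|\le\sqrt d$. The one point requiring care is justifying the differentiation-under-the-integral identity and the cancellation of the odd-order term; the cleanest route is to symmetrize over the reflection $\bu\mapsto-\bu$ (under which the Gaussian is invariant) before Taylor expanding, which makes the vanishing of the $\mu\nabla^2 F(\bx)\bu$ contribution transparent. Since the lemma is quoted from \citep{berahas2022theoretical}, this argument reproduces their Eq.~(2.10)--(2.11) specialized to $e_f=0$.
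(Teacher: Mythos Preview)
Your proposal is correct. The paper does not supply its own proof of this lemma; it is stated as a technical fact cited directly from \citep[(2.10)--(2.11)]{berahas2022theoretical}, and your argument (differentiation under the Gaussian expectation, Lipschitz/Taylor-remainder bounds, and the moment identities $\E\|\bu\|\le\sqrt{d}$, $\E\|\bu\|^2=d$) is exactly the standard derivation behind those inequalities.
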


\begin{techlemma} \label{lem:moment_gauss_noise}
\citep[Lemma 1]{nesterov2017random}
\begin{align*}
\E_{\bu \sim \mathcal N(0, \bI)} [ \| \bu \|^2] = d.
\end{align*}
\end{techlemma}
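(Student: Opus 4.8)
The plan is to exploit the coordinate-wise independence of the standard Gaussian and reduce the claim to the well-known second moment of a scalar standard normal. First I would write $\bu = (u_1, \dots, u_d)$, where the components $u_1, \dots, u_d$ are independent and each distributed as $\mathcal{N}(0,1)$, and expand the squared Euclidean norm coordinate-wise as $\|\bu\|^2 = \sum_{i=1}^d u_i^2$.

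Next I would apply linearity of expectation to obtain $\E_{\bu \sim \mathcal{N}(0,\bI)}[\|\bu\|^2] = \sum_{i=1}^d \E[u_i^2]$, which reduces the problem to computing the second moment of a single standard Gaussian coordinate. Since $u_i \sim \mathcal{N}(0,1)$ has mean zero and unit variance, the identity $\E[u_i^2] = \mathrm{Var}(u_i) + (\E[u_i])^2 = 1 + 0 = 1$ holds for every $i$. Summing over the $d$ coordinates then yields $\E[\|\bu\|^2] = \sum_{i=1}^d 1 = d$, as claimed.

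There is no substantive obstacle here: the statement is an elementary consequence of the definition of the standard multivariate Gaussian, and the only fact invoked beyond linearity of expectation is that a one-dimensional standard normal has unit second moment. If one wished to avoid even that step, the same conclusion follows by observing that $\|\bu\|^2$ is a chi-squared random variable with $d$ degrees of freedom, whose mean equals $d$.
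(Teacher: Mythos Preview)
Your argument is correct and entirely standard. The paper itself does not supply a proof of this lemma; it simply cites the result from \citep[Lemma~1]{nesterov2017random}, so there is nothing in the paper to compare your approach against.
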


\begin{techlemma} \label{lem:k_moment_ss_sp_noise}
\citep[(2.39)]{berahas2022theoretical}
For $\ba \in \R^d$,
\begin{align*}
\E_{\bs \sim \maS} [ (\ba^\top \bs)^2 \bs \bs^\top] = \frac{\ba^\top \ba I + 2\ba \ba^\top}{d(d+2)},
\end{align*}
where $\maS$ denotes the uniform distribution on the unit sphere.
Moreover, for $k =0,1,2,\dots,$
\begin{align*}
\E_{\bs \sim \maS} [ \| \bs \|^k \bs \bs^\top] = \frac{1}{d}I.
\end{align*}
\end{techlemma}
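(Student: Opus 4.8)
The plan is to realize the uniform distribution on the sphere through a standard Gaussian, which turns every spherical moment into a Gaussian moment divided by a radial moment. Concretely, let $\bg \sim \maN(0, I)$ and set $\bs := \bg/\|\bg\|$; then $\bs$ is distributed according to $\maS$, and by the rotational invariance of the isotropic Gaussian the direction $\bs$ is independent of the radius $\|\bg\|$. This independence is the workhorse: for any monomial moment, $\E[g_{i_1}\cdots g_{i_k}] = \E[\|\bg\|^k]\,\E[s_{i_1}\cdots s_{i_k}]$, so spherical moments are obtained by dividing the well-known Gaussian moments by the appropriate radial moment.

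For the second identity, note first that $\|\bs\| = 1$, hence $\|\bs\|^k \bs\bs^\top = \bs\bs^\top$ and it suffices to compute $\E_{\bs}[\bs\bs^\top]$. Writing $g_i = \|\bg\|\, s_i$ gives $\delta_{ij} = \E[g_i g_j] = \E[\|\bg\|^2]\,\E[s_i s_j]$. Since $\|\bg\|^2 \sim \chi^2_d$ has mean $d$, we get $\E[s_i s_j] = \delta_{ij}/d$, i.e. $\E_{\bs}[\bs\bs^\top] = \tfrac{1}{d} I$, as claimed.

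For the first identity, I would compute the full fourth-moment tensor $T_{ijkl} := \E_{\bs}[s_i s_j s_k s_l]$. From $g_i = \|\bg\|\, s_i$ and independence, $\E[g_i g_j g_k g_l] = \E[\|\bg\|^4]\, T_{ijkl}$. Isserlis' (Wick's) theorem for the standard Gaussian gives $\E[g_i g_j g_k g_l] = \delta_{ij}\delta_{kl} + \delta_{ik}\delta_{jl} + \delta_{il}\delta_{jk}$, while $\E[\|\bg\|^4] = \E[(\chi^2_d)^2] = d(d+2)$. Hence $T_{ijkl} = \frac{\delta_{ij}\delta_{kl} + \delta_{ik}\delta_{jl} + \delta_{il}\delta_{jk}}{d(d+2)}$. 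Contracting against $a_i a_j$ (the $(k,l)$ entry of $(\ba^\top\bs)^2\bs\bs^\top$ is $\sum_{i,j} a_i a_j T_{ijkl}$) yields $\frac{(\ba^\top\ba)\,\delta_{kl} + 2 a_k a_l}{d(d+2)}$, which in matrix form is exactly $\frac{(\ba^\top\ba) I + 2\ba\ba^\top}{d(d+2)}$.

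The only genuinely delicate points are the two structural facts I am leaning on: the independence of $\bg/\|\bg\|$ and $\|\bg\|$ (equivalently, the rotational invariance of the Gaussian, which is what makes $\bs$ uniform on the sphere in the first place), and the evaluation of the radial moments $\E[\|\bg\|^2]=d$ and $\E[\|\bg\|^4]=d(d+2)$ through the chi-squared distribution. Everything else is the bookkeeping of Isserlis' theorem followed by a single contraction. An alternative route avoiding the Gaussian representation is a pure equivariance argument: orthogonal invariance forces $\E_{\bs}[(\ba^\top\bs)^2\bs\bs^\top]$ to be an isotropic, fully symmetric fourth-order tensor, hence a multiple of $\delta_{ij}\delta_{kl}+\delta_{ik}\delta_{jl}+\delta_{il}\delta_{jk}$, with the single free constant pinned down by the trace constraint $\E[\|\bs\|^4]=1$; I would keep this in reserve, since it reaches the same coefficient $1/(d(d+2))$ without invoking Gaussian moments.
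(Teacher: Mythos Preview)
The paper does not actually prove this lemma; it is quoted verbatim from \cite[(2.39)]{berahas2022theoretical} and listed in the technical-lemmas appendix without argument. Your proof is correct: the Gaussian representation $\bs=\bg/\|\bg\|$ with $\bg\sim\maN(0,I)$, independence of direction and radius, Isserlis' theorem for the fourth Gaussian moment, and the chi-squared moments $\E[\|\bg\|^2]=d$, $\E[\|\bg\|^4]=d(d+2)$ together yield both identities cleanly, and the contraction you wrote is exactly right.
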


\begin{techlemma} \label{lem:k_moment_uu_gauss_noise}
\citep[(2.18)]{berahas2022theoretical}
For $\ba \in \R^d$,
\begin{align*}
\E_{\bu \sim \mathcal N(0, I)} [ (\ba^\top \bu)^2 \bu \bu^\top] = \ba^\top \ba I + 2\ba \ba^\top.
\end{align*}
For $k=0,2,4,\dots,$
\begin{align*}
\E_{\bu \sim \mathcal N(0, I)} [ \| \bu \|^k \bu \bu^\top] = (d+2)(d+4)\dots(d+k)I.
\end{align*}
\end{techlemma}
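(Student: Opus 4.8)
The plan is to prove the two identities separately, in each case reducing to standard scalar Gaussian moment computations. For the first identity I would work entry by entry. The $(p,q)$ entry of $(\ba^\top\bu)^2\,\bu\bu^\top$ equals $\bigl(\sum_i a_i u_i\bigr)^2 u_p u_q = \sum_{i,j} a_i a_j\, u_i u_j u_p u_q$, so the $(p,q)$ entry of the expectation is $\sum_{i,j} a_i a_j\, \E[u_i u_j u_p u_q]$. The essential input is the fourth-moment (Isserlis/Wick) formula $\E[u_i u_j u_p u_q] = \delta_{ij}\delta_{pq}+\delta_{ip}\delta_{jq}+\delta_{iq}\delta_{jp}$, which follows from independence of the coordinates together with $\E[u^2]=1$, $\E[u^4]=3$, and the vanishing of odd moments. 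Substituting the three pairings, the first contributes $\delta_{pq}\sum_i a_i^2 = (\ba^\top\ba)\,\delta_{pq}$, while the second and third each contribute $a_p a_q$. Hence the $(p,q)$ entry is $(\ba^\top\ba)\,\delta_{pq} + 2 a_p a_q$, which is exactly the $(p,q)$ entry of $(\ba^\top\ba) I + 2\ba\ba^\top$.

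For the second identity I would use rotational invariance. For any orthogonal $Q$, the change of variables $\bu\mapsto Q\bu$ leaves both the law of $\bu\sim\maN(0,I)$ and the scalar $\|\bu\|$ unchanged, which gives $Q\,\E[\|\bu\|^k\bu\bu^\top]\,Q^\top = \E[\|\bu\|^k\bu\bu^\top]$; a matrix commuting with every orthogonal transformation must be a scalar multiple $c I$ of the identity. To determine $c$, I take the trace: $\tr\bigl(\E[\|\bu\|^k\bu\bu^\top]\bigr) = \E[\|\bu\|^{k}\,\tr(\bu\bu^\top)] = \E[\|\bu\|^{k+2}]$, while $\tr(cI)=cd$, so $c = \tfrac1d\,\E[\|\bu\|^{k+2}]$. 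Writing $X:=\|\bu\|^2\sim\chi^2_d$ and using that $k$ is even, so that $\|\bu\|^{k+2}=X^{k/2+1}$, I invoke the chi-squared moment formula $\E[X^n]=\prod_{j=0}^{n-1}(d+2j)$ with $n=k/2+1$; this yields $\E[\|\bu\|^{k+2}]=d(d+2)(d+4)\cdots(d+k)$, and dividing by $d$ gives $c=(d+2)(d+4)\cdots(d+k)$. The case $k=0$ degenerates to the empty product $c=1$ and the covariance identity $\E[\bu\bu^\top]=I$.

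The computations are routine; the two points requiring care are (i) keeping all three Wick contractions in the first identity, since collapsing the two mixed pairings is a common way to lose the factor of $2$ in front of $\ba\ba^\top$, and (ii) invoking the chi-squared recursion with the correct index range so that the product terminates precisely at $(d+k)$ rather than $(d+k-2)$ or $(d+k+2)$. For (ii) I would, if needed, justify $\E[X^n]=(d+2(n-1))\,\E[X^{n-1}]$ by induction, using $\E[X^{n}]=\E[\|\bu\|^{2}\,\|\bu\|^{2(n-1)}]$ together with Stein's identity (integration by parts), or simply cite the Gamma moment formula, as $\chi^2_d=\mathrm{Gamma}(d/2,2)$.
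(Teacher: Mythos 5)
Your proof is correct, but it cannot be compared against an in-paper argument for a simple reason: the paper does not prove this lemma at all. It is stated as an imported technical fact, cited directly to equation (2.18) of \citet{berahas2022theoretical}, and used as a black box in the proofs of Lemmas \ref{lem:g_variance_Ga_batch1} and \ref{lem:g_variance_Ga_batch1_H_smooth}. What you have produced is therefore a self-contained derivation of a result the paper takes on faith. Both halves of your argument are sound: the entrywise Wick/Isserlis computation correctly keeps all three pairings, giving the $(p,q)$ entry $(\ba^\top\ba)\,\delta_{pq}+2a_pa_q$ (the factor $2$ on $\ba\ba^\top$ is exactly the contribution of the two mixed contractions, as you note); and the second identity follows cleanly from rotational invariance (the matrix $\E[\|\bu\|^k\bu\bu^\top]$ commutes with every orthogonal $Q$, hence equals $cI$) plus the trace computation $cd=\E\bigl[\|\bu\|^{k+2}\bigr]$ and the $\chi^2_d$ moment formula $\E[X^n]=\prod_{j=0}^{n-1}(d+2j)$ with $n=k/2+1$, which terminates the product at $(d+k)$ as required and degenerates correctly to $\E[\bu\bu^\top]=I$ at $k=0$. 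One could obtain the second identity by the same entrywise Wick expansion used for the first, but your symmetry argument is shorter and avoids higher-order pairing combinatorics; conversely, the first identity is itself a special case of symmetry-plus-trace only up to the rank-one correction, so the entrywise computation there is genuinely needed. In short: the proposal is a valid replacement for the citation, which the paper never substantiates internally.
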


\begin{techlemma} \label{lem:minibatch_var_reduction}
\cite[p183]{freund1986mathematical}
For any $\bx \in \R^d$ and distribution $D$, 
$$\E_{\{\bmxi^j\}_{j=1}^m \sim D}\left[\left( \frac{1}{m}\sum_{j=1}^{m} f(\bx, \bmxi^j) - \E_{\bmxi \sim D}[f(\bx, \bmxi)] \right)^2 \right] \le \frac{1}{m} \E_{\bmxi' \sim D}\left[\left( f(\bx, \bmxi') - \E_{\bmxi \sim D}[f(\bx, \bmxi)] \right)^2 \right]. $$
\end{techlemma}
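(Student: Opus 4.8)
The plan is to reduce the statement to the elementary fact that the variance of an average of independent terms scales like $1/m$. First I would fix $\bx \in \R^d$ and abbreviate the mean by $\mu := \E_{\bmxi \sim D}[f(\bx,\bmxi)]$, then introduce the centered variables $Z_j := f(\bx,\bmxi^j) - \mu$ for $j \in [m]$. Since the samples $\bmxi^1,\dots,\bmxi^m$ are drawn independently from $D$, the $Z_j$ are independent and identically distributed with $\E[Z_j]=0$, and the left-hand side is exactly $\E\bigl[(\tfrac{1}{m}\sum_{j=1}^m Z_j)^2\bigr]$.

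Next I would expand the square and use linearity of expectation,
\[
\E\left[\left(\frac{1}{m}\sum_{j=1}^m Z_j\right)^2\right]
= \frac{1}{m^2}\sum_{j=1}^m\sum_{k=1}^m \E[Z_j Z_k].
\]
The key step is to note that for $j \neq k$ independence gives $\E[Z_j Z_k]=\E[Z_j]\E[Z_k]=0$, so only the $m$ diagonal terms survive. Each diagonal term equals $\E[Z_1^2]=\E_{\bmxi'\sim D}[(f(\bx,\bmxi')-\mu)^2]$ because the $Z_j$ are identically distributed, so the double sum collapses to
\[
\frac{1}{m^2}\cdot m\cdot \E[Z_1^2]
= \frac{1}{m}\,\E_{\bmxi'\sim D}\bigl[(f(\bx,\bmxi')-\mu)^2\bigr],
\]
which is precisely the right-hand side.

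This chain actually yields equality, so the stated inequality follows a fortiori. There is no genuine obstacle here: the only point requiring care is the vanishing of the off-diagonal cross terms, which relies on (pairwise) independence of the samples, while the identical-distribution property is what lets us replace each diagonal term by the single-sample variance. Everything else is expansion and counting, so I would keep the write-up to these few lines.
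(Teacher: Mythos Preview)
Your argument is correct and is exactly the standard derivation of the variance of a sample mean under i.i.d.\ sampling; as you note, it in fact yields equality, so the stated inequality follows trivially. The paper does not give its own proof of this lemma but simply cites it as a textbook fact from \cite[p.~183]{freund1986mathematical}, so there is nothing further to compare.
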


\begin{techlemma} \label{lem:1_to_n_two_norm_bound}
For any $s_1,s_2,\dots, s_m \in \R$,
\begin{align*}
    \left(\sum_{i\in[m]} s_i \right)^2 \le m \sum_{i\in[m]} s_i^2.
\end{align*}
\end{techlemma}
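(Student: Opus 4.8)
The plan is to prove the elementary inequality $\left(\sum_{i\in[m]} s_i\right)^2 \le m \sum_{i\in[m]} s_i^2$ via the Cauchy--Schwarz inequality applied to the vector $(s_1,\dots,s_m)$ against the all-ones vector. First I would write $\sum_{i\in[m]} s_i = \sum_{i\in[m]} 1 \cdot s_i$, exhibiting the sum as an inner product of $(1,1,\dots,1)$ and $(s_1,\dots,s_m)$ in $\R^m$.

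Then I would apply Cauchy--Schwarz, $\left(\sum_{i\in[m]} a_i b_i\right)^2 \le \left(\sum_{i\in[m]} a_i^2\right)\left(\sum_{i\in[m]} b_i^2\right)$, with $a_i = 1$ and $b_i = s_i$. Since $\sum_{i\in[m]} 1^2 = m$, this immediately yields $\left(\sum_{i\in[m]} s_i\right)^2 \le m \sum_{i\in[m]} s_i^2$, which is exactly the claim.

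An alternative route, if a self-contained argument is preferred over invoking Cauchy--Schwarz, is to expand the square directly: $\left(\sum_i s_i\right)^2 = \sum_{i} s_i^2 + \sum_{i\neq j} s_i s_j$, and bound each cross term using $s_i s_j \le \tfrac{1}{2}(s_i^2 + s_j^2)$ (the AM--GM / rearrangement of $(s_i-s_j)^2 \ge 0$). Summing these bounds over all ordered pairs shows $\sum_{i\neq j} s_i s_j \le (m-1)\sum_i s_i^2$, and combining with the diagonal terms gives the result. There is no real obstacle here: the statement is a standard consequence of Cauchy--Schwarz (equivalently, the fact that variance is nonnegative), so the only ``difficulty'' is choosing which phrasing is cleanest; I would simply cite Cauchy--Schwarz with the all-ones vector for brevity.
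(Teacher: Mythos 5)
Your proof is correct and matches the paper's own argument exactly: the paper also writes the sum as an inner product of $(s_1,\dots,s_m)$ with the all-ones vector and applies the Cauchy--Schwarz inequality. Your alternative expansion via $s_i s_j \le \tfrac{1}{2}(s_i^2+s_j^2)$ is also valid but unnecessary; the Cauchy--Schwarz route you lead with is precisely what the paper does.
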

\begin{proof}
\begin{align*}
\left(\sum_{i\in[m]} s_i \right)^2 
= \left((s_1, s_2, \dots , s_m) \begin{pmatrix}1 \\ 1 \\ \vdots \\ 1 \end{pmatrix}\right)^2
\overset{(a)}{\le} \|(s_1, s_2, \dots , s_m)\|^2 \|(1,1,\dots,1) \|^2 = 
m \sum_{i\in[m]} s_i^2,
\end{align*}
where (a) comes from the Cauchy–Schwarz inequality.
\end{proof}

\end{document}